\newtheorem{fed}{Definition}[section]
\newtheorem{teo}[fed]{Theorem}
\newtheorem*{teo*}{Theorem}
\newtheorem{lem}[fed]{Lemma}
\newtheorem{cor}[fed]{Corollary}
\newtheorem{pro}[fed]{Proposition}
\theoremstyle{definition}
\newtheorem{rem}[fed]{Remark}
\newtheorem{exa}[fed]{Example}
\newtheorem{num}[fed]{}
\def\hn{\nu (\cF_0 \coma \ca)}
\def\hr{r (\cF_0 \coma \ca)}
\def\hm{\mu (\cF_0 \coma \ca)}
\def\hc{c (\cF_0 \coma \ca)}
\newcommand{\IN}[1]{\mathbb {I} _{#1}}
\def\In{\mathbb {I} _n}
\def\IM{\mathbb {I} _m}
\def\suml{\sum\limits}
\def\bce{\begin{center}}
\def\ece{\end{center}}
\newcommand{\trivial}{\{0\}}
\DeclareMathOperator{\FP}{FP\,}
\def\cD{\mathcal D}
\def\subim{_{i\in \IN{n}}\,}
\def\py{\peso{and}}
\def\rk{\text{\rm rk}}
\def\noi{\noindent}
\def\cF{\mathcal F}
\def\QED{\hfill $\square$}
\def\EOE{\hfill $\triangle$}
\def\EOEP{\tag*{\EOE}}
\newcommand{\peso}[1]{ \quad \text{ #1 } \quad }
\def\uno{\mathds{1}}
\def\bm{\left[\begin{array}}
\def\em{\end{array}\right]}
\def\ben{\begin{enumerate}}
\def\een{\end{enumerate}}
\def\bit{\begin{itemize}}
\def\eit{\end{itemize}}
\def\barr{\begin{array}}
\def\earr{\end{array}}
\def\igdef{\ \stackrel{\mbox{\tiny{def}}}{=}\ }
\def\k{n}
\def\la{\lambda}
\def\al{\alpha}
\def\N{\mathbb{N}}
\def\R{\mathbb{R}}
\def\C{\mathbb{C}}
\def\cA{\mathcal{A}}
\def\cI{\mathcal{I}}
\def\cC{\mathcal{C}}
\def\cH{\mathcal{H}}
\def\cK{\mathcal{K}}
\def\cS{{\cal S}}
\def\cM{{\cal M}}
\def\cB{{\cal B}}
\def\cV{{\cal F}}
\def\cU{{\cal U}}
\def\cW{{\cal G}}
\def\ca{\mathbf{a}}
\def\cb{\mathbf{b}}
\def\ese{\mathcal{S}}
\def\eme{\mathcal{M}}
\def\ene{\mathcal{N}}
\def\vacio{\varnothing}
\def\orto{^\perp}
\def\inc{\subseteq}
\def\sii{ if and only if }
\def\inv{^{-1}}
\def\rai{^{1/2}}
\def\api{\langle}
\def\cpi{\rangle}
\def\vlm{\nu_{\la\coma m}}
 \DeclareMathOperator{\tr}{tr}
\DeclareMathOperator{\gen}{span}
\DeclareMathOperator{\leqp}{\leqslant}
\DeclareMathOperator{\geqp}{\geqslant}
\def\RS{\mathbf{F} }
\def\Fnd{\RS (n \coma d)}
\def\RSV{\cF= \{f_i\}_{i\in \, \IN{n}}}
\def\RSW{\cW= \{g_i\}_{i\in \, \IN{n}}}
\newcommand{\hil}{\mathcal{H}}
\newcommand{\op}{L(\mathcal{H})}
\newcommand{\lhk}{L(\mathcal{H} \coma \mathcal{K})}
\newcommand{\lhcn}{L(\mathcal{H} \coma \C^n)}
\newcommand{\lkh}{L(\mathcal{K} \coma \mathcal{H})}
\newcommand{\posop}{L(\mathcal{H})^+}
\def\H{{\cal H}}
\def\glh{\mathcal{G}\textit{l}\,(\cH)}
\newcommand{\cene}{\mathbb{C}^n}
\newcommand{\mat}{\mathcal{M}_d(\mathbb{C})}
\newcommand{\matn}{\mathcal{M}_n(\mathbb{C})}
\newcommand{\matsa}{\mathcal{H}(n)}
\newcommand{\matu}{\mathcal{U}(n)}
\newcommand{\matpos}{\mat^+}
\newcommand{\matposn}{\matn^+}
\newcommand{\matinv}{\mathcal{G}\textit{l}\,(n)}
\newcommand{\matinvd}{\mathcal{G}\textit{l}\,(d)}
\def\gld{\matinvd^+}
\newcommand{\matrec}[1]{\mathcal{M}_{#1} (\mathbb{C})}
\def\beq{\begin{equation}}
\def\eeq{\end{equation}}
\def\pausa{\medskip\noi}
\begin{document}

\title{ {\bf Optimal dual frames and frame completions for majorization}}
\author{Pedro G. Massey, Mariano A. Ruiz  and Demetrio Stojanoff\thanks{Partially supported by CONICET 
(PIP 5272/05) and  Universidad de La PLata (UNLP 11 X472).} }
\author{P. G. Massey, M. A. Ruiz and D. Stojanoff \\ {\small Depto. de Matem\'atica, FCE-UNLP,  La Plata, Argentina
and IAM-CONICET  
}}
\date{}
\maketitle

\centerline{Dedicated to the memory of ``el flaco" L. A. Spinetta.}

\begin{abstract}  In this paper we consider two problems in frame theory. On the one hand,
 given a set of vectors $\mathcal F$ we describe the spectral and geometrical structure of optimal completions of $\mathcal F$ by a finite family of vectors with prescribed norms, where optimality is measured with respect to majorization. In particular, these optimal completions are the minimizers of a family of convex functionals that include the mean square error and the Benedetto-Fickus' frame potential.  On the other hand, given a fixed frame $\mathcal F$ we describe explicitly the spectral and geometrical structure of optimal frames $\mathcal G$ that are in duality with $\mathcal F$ and such that the Frobenius norms of their analysis operators is bounded from below by a fixed constant. In this case, optimality is measured with respect to submajorization of the frames operators. Our approach relies on the description of the spectral and geometrical structure of matrices that minimize submajorization on sets that are naturally associated with the problems above.
\end{abstract}

\def\coma{\, , \, }

\noindent  AMS subject classification: 42C15, 15A60.

\noindent Keywords: frames, dual frames, frame completions, majorization, Schur-Horn

\tableofcontents

\section{Introduction}

Finite frame theory is a well established research field that has attracted the attention of many researchers (see \cite{TaF,Chr,HLmem} for general references to frame theory). On the one hand, finite frames provide  redundant linear encoding-decoding schemes that are useful when dealing with transmission of signals through noisy channels . Indeed, the redundancy of frames allows  for reconstruction of a signal, even when some frame coefficients are lost. Moreover, frames have also shown to be robust under erasures of the frame coefficients when a blind reconstruction strategy is considered (see \cite{Bod,Pau,BodPau,HolPau,LeHanagre,LoHanagre,MRS3}). On the other hand, there are several problems in frame theory that have deep relations with problems in other areas of mathematics (such as matrix analysis, operator theory and operator algebras) which constitute a strong motivation for research. For example, we can mention the relation between the Feichtinger conjecture in frame theory and some major open problems in operator algebra theory such as the Kadison-Singer problem (see \cite{CCL,CFTW}). Other examples of this phenomenon are
the design problem in frame theory, the so-called Paulsen problem in frame theory and frame completion problems (\cite{Illi,CFMP,Casagregado,CC,CMTL,DHST,DFKLOW,FWW,FicMixPot,KLagregado,MR0}) which are known to be equivalent to different aspects of the Schur-Horn theorem. Recently, matrix analysis has served as a tool to show some structural properties of minimizers of the Benedetto-Fickus frame potential (\cite{BF,Phys}) and other convex functionals in the finite setting (\cite{MR,MRS,MRS2}).

Following \cite{Illi,DHST,MR0,MR,MRS,MRS2}, in this paper we explore  new connections of problems that arise naturally in frame theory with some results in matrix theory related with the notion of (sub)majorization between vectors and positive matrices. Indeed, one of the main problems in frame theory is the design of frames with some prescribed parameters and such that they are optimal in some sense. Optimal frames $\cF$ are usually the minimizers of a tracial convex functional i.e., a functional of the form $P_f(\cF)=\tr(f(S_\cF))$ for some convex function $f(x)$, where $S_\cF$ 
is the frame operator of $\cF$. For example, we mention the Benedetto-Fickus' frame potential 
(i.e. $f(x)=x^2$) or the mean square error (i.e. $f(x)=x^{-1}$) or the negative of von Neumann's entropy 
(i.e. $f(x)=x\,\log(x)$). Thus, in many situations it is natural to ask whether the optimal frames corresponding to different convex potentials coincide: that is, whether optimality with respect to these potentials is an structural property. One powerful tool to deal with this type of problems is the notion of (sub)-majorization between positive operators, because of its relation with tracial inequalities with respect to convex functions as above (see Section \ref{subsec 2.2}). 
Hence, a (sub)-majorization based strategy can reveal structural properties of optimal frames. It is worth pointing out that (sub)-majorization is not a total preorder and therefore the task of computing minimizers of this relation within a given set of positive operators - if such minimizers exist - is usually a non trivial problem.

In this paper we consider the following two optimality 
problems in frame theory in terms of (sub)-majorization (see Section \ref{Pre} for the notation and terminology).  Given a finite sequence of vectors $\cF_0 \inc \hil\cong\mathbb C^d$ and a 
finite sequence of positive numbers $\cb$ we are interested in 
computing optimal frame completions of $\cF_0$, denoted by $\cF$, 
obtained by adding vectors with norms prescribed by the entries 
of $\cb$ (see Section \ref{Fick} for the motivation and a detailed description of this problem). In this context we show the existence of minimizers of majorization in the set of frame completions of $\cF_0$ with prescribed norms, under certain hypothesis on $\cb$; we also compute the spectral and geometrical structure of these optimal completions. Our results can be considered as a further step in the classical frame completion and frame design problems considered in \cite{Illi,CFMP,Casagregado,CMTL,DHST,DFKLOW,FicMixPot,KLagregado}. In particular, we solve the frame completion problem recently posed in \cite{FicMixPot}, where optimality is measured with respect to 
the mean square error of the completed frame.

On the other hand, given a fixed frame $\cF$ for a finite dimensional 
Hilbert space $\hil\cong\mathbb C^d$,  
let 
$\cD(\cF)$ denote the set of all frames $\cW$ that are in duality with $\cF$. 
It is well known that the canonical dual of $\cF$, denoted $\cF^\#$, has some optimality properties among the elements in $\cD(\cF)$. Nevertheless, although optimal in some senses, there might be alternate duals that are more suitable for applications (see \cite{BLagreg,Han,LeHanagre,LoHanagre,MRS3,WES}).  In order to search for optimal alternative duals for $\cF$ we restrict attention the set $\cD_t(\cF)$ which consists of frames $\cW$ that are in duality with $\cF$ and such that the Frobenius norm of their frame operators is bounded from below by a constant $t$. 
Therefore, in this paper we show the existence of minimizers of submajorization in $\cD_t(\cF)$ and we 
explicitly describe their spectral and geometrical structure (see Section \ref{duales con rest} for the motivation and a detailed description of this problem).

Both problems above are related with the minimizers of (sub)majorization in certain sets $\mathcal S$ of positive semidefinite matrices that arise naturally. We show that these sets $\mathcal S$ that we consider have minimal elements with respect to (sub)-majorization, a fact that is of independent interest (see Theorems \ref{prop consec de la defi}, \ref{tutti nu} and \ref{el St general}). 
Notably, the existence of such minimizers is essentially obtained with insights coming from frame theory.

The paper is organized as follows: In Section \ref{Pre} we establish 
the notation and terminology used throughout the paper, and we state some 
basic facts from frame theory and majorization theory. 
In Sections \ref{Fick} and \ref{duales con rest} we give a detailed description of the two main problems of frame theory mentioned above, including motivations, related results and specific notations. Section \ref{description of main prob} 
ends with
the definitions and statements of the matrix theory results of the paper, which 
give a unified matrix model for the frame problems; in order to avoid some technical aspects of these results, 
their proofs are presented in an Appendix (Section \ref{appendixiti}).
In Section \ref{la solucion frames}  we 
apply the previous analysis of the matrix model to obtain the solutions
of the frame problems, including algorithmic implementations and several examples. 
With respect to the problem of optimal completions, 
we obtain a complete description in several cases, that include the case 
of uniform norms for the added vectors.
With respect to the problem of minimal duals, 
we completely describe their spectral and geometrical structure. 
The Appendix, Section \ref{appendixiti}, contains the proofs
of the matrix theory results of Section \ref{unified}; 
it is divided in three subsections in which we develop the following steps: 
the characterization of the set of vectors of eigenvalues of elements in the matrix model,
the description of the minimizers for sub-majorization in this set, and 
the description of the geometric structure of the matrices which are minimizers 
for sub-majorization in the matrix model.

\section{Preliminaries}\label{Pre}

In this section we describe the basic notions that we shall consider throughout the paper. We first establish the general notations and then we recall the basic facts from frame theory that are related with our main results. 
 Finally, we  describe submajorization which is a notion from matrix analysis, that will play a major role in this note.

\subsection{General notations.}
Given $m \in \N$ we denote by $\IM = \{1, \dots , m\} \inc \N$ and 
$\uno = \uno_m  \in \R^m$ denotes the vector with all its entries equal to $1$. 
For a vector $x\in \R^m$ we denote by $x^\downarrow$ the rearrangement
of $x$ in  decreasing order, and $\R^m \,^\downarrow = \{ x\in \R^m : x = x^\downarrow\}$
the set of ordered vectors. 

\pausa
Given $\cH \cong \C^d$  and $\cK \cong \C^n$, we denote by $\lhk $ 
the space of linear operators $T : \cH \to \cK$. 
Given an operator $T \in \lhk$, $R(T) \inc \cK$ denotes the
image of $T$, $\ker T\inc \cH$ the null space of $T$ and $T^*\in \lkh$ 
the adjoint of $T$. If $d\le n$ we say that $U\in \lhk$ is an isometry 
if $U^*U = I_\cH\,$. In this case, $U^*$ is called a coisometry. 
If $\cK = \cH$ we denote by $\op = L(\cH \coma \cH)$, 
by $\glh$ the group of all invertible operators in $\op$, 
 by $\posop $ the cone of positive operators and by
$\glh^+ = \glh \cap \posop$. 
If $T\in \op$, we  denote by   
$\sigma (T)$ the spectrum of $T$, by $\rk\, T= \dim R(T) $  the rank of $T$,
and by $\tr T$ the trace of $T$. By fixing an orthonormal basis (onb) 
of the Hilbert spaces involved, we shall identify operators with 
matrices, using the following notations:

\pausa 
By $\matrec{n,d} \cong L(\C^d \coma \C^n)$ we denote the space of complex $n\times d$ matrices. 
If $n=d$ we write $\matn = \matrec{n,n}$.  
$\matsa$ is the $\R$-subspace of selfadjoint matrices,  
$\matinv$ the group of all invertible elements of $\matn$, $\matu$ the group 
of unitary matrices, 
$\matposn$ the set of positive semidefinite
matrices, and $\matinv^+ = \matposn \cap \matinv$. 
If $d\le n$, we denote by $\cI(d\coma n) \inc 
\matrec{n\coma d}$ the set of isometries, i.e. those 
$U\in \matrec{n\coma d} $ such that $U^*U = I_d\,$. 
Given $S\in \matrec{n}^+$, we write $\la(S) \in 
\R_{+}^n\,^\downarrow$ the 
vector of eigenvalues of $S$ - counting multiplicities - arranged in decreasing order. 
If $ \lambda(S)=\la= (\lambda_1 \coma \ldots \coma \lambda_n) \in \R_{+}^n\,^\downarrow\,$, a system 
 $\{h_i\}_{i\in \IN{n}} \inc \C^n$ is a 
``ONB of eigenvectors for $S\coma \la \,$" if it is an  
orthonormal basis for $\C^n$ such that 
$S\,h_i=\lambda_i\,h_i$ for every $i\in \IN{n}\,$.

\pausa
If $W\inc \cH$ is a subspace we denote by $P_W \in \posop$ the orthogonal 
projection onto $W$, i.e. $R(P_W) = W$ and $\ker \, P_W = W^\perp$. 
Given $x\coma y \in \cH$ we denote by $x\otimes y \in \op$ the rank one 
operator given by 
$x\otimes y \, (z) = \api z\coma y\cpi \, x$ for every $z\in \cH$. Note that
if $\|x\|=1$ then $x\otimes x = P_{\gen\{x\}}\,$. 

\pausa
For vectors in $\cene$ we shall use the euclidean norm.  
On the other hand,  for  $T\in \matrec{n\coma d}$ we shall use both the spectral norm, denoted $\|T\|$, and the Frobenius norm, denoted $\|T\|_{_2}$, given by
$$\|T\| =  \max\limits_{\|x\|=1}\|Tx\| \peso{ and }  \|T\|_{_2} = (\tr \, T^*T )\rai = 
\big( \, \suml_{i \in \In\, , \ j\in \IN{d}  } |T_{ij}|^2 \, \big)\rai \ . $$
\subsection{Basic framework of finite frames and their dual frames}\label{basic}

In what follows we consider $(\k,d)$-frames. See 
\cite{BF,TaF,Chr,HLmem,MR} for detailed expositions of several aspects of this notion. 

\pausa
Let $d, n \in \N$, with $d\le n$. Fix a Hilbert space $\hil\cong \C^d$. 
A family $ \RSV \in  \cH^n $  is an 
$(\k,d)$-frame for $\cH$  if there exist constants $A,B>0$ such that
\beq\label{frame defi} A\|x\|^2\leq \sum_{i=1}^n |\left \langle x \, , f_i\right \rangle|^2\leq B \|x\|^2 \peso{for every} x\in \hil \ .
\eeq
The {\bf frame bounds}, denoted by  $A_\cF, B_\cF$ are the optimal constants in \eqref{frame defi}. If $A_\cF=B_\cF$ we call $\cF$ a tight frame.
Since $\dim \hil<\infty$, a family  $\RSV$  is an 
$(\k,d)$-frame 
 \sii $\gen\{f_i: i \in \In \} = \cH$.  
We shall denote by $\RS = \RS(n \coma d)$ the set of all $(\k,d)$-frames for $\cH$. 

\pausa
Given $\RSV \in  \cH^n $, the operator $T_\cV \in L(\hil\coma\C^n)$ defined
by 
 \beq \ T_\cV\, x= \big( \,\api x \coma f_i\cpi\,\big)  \subim 
\ , \peso{for every} x\in \cH \,
\eeq
is the {\bf analysis} operator of $\cF$.  Its adjoint $T_\cV^*$ is called the {\bf synthesis} operator: 
$$
T_\cV^* \in L(\C^n\coma \cH)  \peso{given by}
T_\cV ^* \, v =\sum_{i\in \, \IN{m}} v_i\, f_i \peso{for every}
v = (v_1\coma \dots\coma v_n)\in \C^n \ . 
$$
Finally, we define the {\bf frame operator} of $\cV$ as 
$S_\cV = T_\cV^*\  T_\cV = \sum_{i \in \In} f_i \otimes f_i 
\in \posop\, $. Notice that, if 
$\cF \in \Fnd$, then  $\api S_\cV \, x\coma x\cpi \, = \sum\subim \, 
 \big|\, \api x \coma f_i\cpi \, \big|^2$ for every $x\in \cH$, so  	
$S_\cF\in \glh^+$ and 
\beq\label{const RS}
A_\cV \, \|x\|^2 \, \le \, \api S_\cV \, x\coma x\cpi 
 \,  \le \, B_\cV \, \|x\|^2  \peso{for every} x\in \cH \ .
\eeq
In particular, $A_\cV   =\la_{\min} (S_\cV) = \|S_\cV\inv \| \inv$ and $ 
\la_{\max} (S_\cV) = \|S_\cV \| = B_\cV \,$.
Moreover, $\cV $ is tight if and only if $S_\cV = 
\frac{\tau}{d}  \, I_\H\,$, where $\tau = 
\tr S_\cV = \sum\subim \|f_i\|^2 \,$.

\pausa
The frame operator plays an important role in the reconstruction of a vector $x$ using its frame coefficients 
$\{\api  x\coma f_i\cpi \,\}_{i\in \IN{n}}$. This leads to the definition of the canonical dual frame associated to $\cF$:
for every $\RSV\in \RS(n \coma d)$, the {\bf canonical dual} frame associated to $\cV$ is the sequence 
$\cV^\#\in \RS$ defined by
$$
\cV^\# \igdef S_\cV ^{-1} \cdot \cV  =  \{S_\cV ^{-1} \,f_i\,\}_{i\in \, \IN{m}} \in \RS(n \coma d) \ .
$$
Therefore, we obtain the reconstruction formulas
\begin{equation}\label{ec recons}
x= \sum\subim  \api  x \coma f_i\cpi \, S_\cV^{-1} \,f_i 
= \sum\subim  \api  x \coma S_\cV^{-1} \, f_i\cpi \, f_i  \peso{for every} x\in \cH \ .
\end{equation} 
Observe that the canonical dual $\cV^\#$ satisfies that given $x\in \cH$, then 
\beq\label{SMP}
T_{\cV^\#}\, x= \big( \,\api x \coma  S_\cV\inv \, f_i\cpi\,\big)  \subim 
= \big( \,\api S_\cV\inv \, x \coma   f_i\cpi\,\big)  \subim 
\peso{for} x\in \cH 
\implies 
T_{\cV^\#}= T_\cV \, S_\cV\inv \ .
\eeq 
Hence $T_{\cV^\#}^* \, T_\cV = 
I_\cH $ and $S_{\cV^\#} =  S_{\cV}^{-1}\,T_\cV^*\  T_\cV\,S_{\cV}^{-1} = S_{\cV}^{-1}\,$.  

\pausa 
In their seminal work \cite{BF}, Benedetto and Fickus introduced a functional defined (on unit norm frames), the so-called frame potential, given by $$ \FP(\{f_i\}_{i\in \In} )=\sum_{i,\,j\,\in \In}|
\api f_i\coma f_j \cpi |\,^2\ .$$ One of their major results shows that tight unit norm frames - which form an important class of frames because of their simple reconstruction formulas - can be characterized as (local) minimizers of this functional among unit norm frames. Since then, there has been interest in (local) minimizers of the frame potential within certain classes of frames, since such minimizers can be considered as natural substitutes of tight frames (see \cite{Phys,MR,MRS}). Notice that, given $\cF=\{f_i\}_{i\in \IN{n}}\in  \cH^n$ then $\FP(\cF)=\tr(S_\cF^2)=\sum_{i\in \IN{d}}\lambda_i(S_\cF)^2$.
These remarks have motivated 
the definition of general convex potentials as follows:

\begin{fed}\label{pot generales}\rm
Let $f:[0,\infty)\rightarrow [0,\infty)$ be a 
convex function. Following \cite{MR} we consider the (generalized) frame potential associated to $f$, denoted $P_f$, given by
\begin{equation}
P_f(\cF)=\tr(f(S_\cF)) \peso {for} 
\cF=\{f_i\}_{i\in \IN{n}}\in  \cH^n \ .\EOEP
\end{equation}  
\end{fed}

\pausa
Of course, one of the most important generalized potential is the Benedetto-Fickus' (BF) frame potential. As shown in \cite[Sec. 4]{MR} these convex functionals (which are related with the so-called entropic measures of frames) share many properties with the BF-frame potential. Indeed, under certain restrictions both the spectral and geometric structures of minimizers of these potentials coincide (see \cite{MR}).

\subsection{Submajorization} \label{subsec 2.2}

Next we briefly describe submajorization, a notion from matrix analysis theory that will be used throughout the paper. For a detailed exposition of submajorization see \cite{Bat}.

\pausa 
 Given $x,\,y\in \R^d$ we say that $x$ is
{\bf submajorized} by $y$, and write $x\prec_w y$,  if
$$\suml_{i=1}^k x^\downarrow _i\leq \suml_{i=1}^k y^\downarrow _i \peso{for every} k\in \mathbb I_d \,.$$  
If $x\prec_w y$ and $\tr x = \suml_{i=1}^dx_i=\suml_{i=1}^d y_i = \tr y$,  then we say that $x$ is
{\bf majorized} by $y$, and write $x\prec y$. 

\pausa
On the other hand we write 
$x \leqp y$ if $x_i \le y_i$ for every $i\in \mathbb I_d \,$.  It is a standard  exercise 
to show that $x\leqp y \implies x^\downarrow\leqp y^\downarrow  \implies x\prec_w y $. 
Majorization is usually considered because of its relation with tracial inequalities 
for convex functions. 
Indeed, given $x,\,y\in \R^d$ and  $f:I\rightarrow \R$ a 
convex function defined on an interval $I\inc \R$ such that 
$x,\,y\in I^d$,  then (see for example \cite{Bat}): 
\ben 
\item If one assumes that $x\prec y$, then 
$ 
\tr f(x) \igdef\suml_{i=1}^df(x_i)\leq \suml_{i=1}^df(y_i)=\tr f(y)\ .
$
\item If only $x\prec_w y$,  but the map $f$ is also increasing, then  still 
$\tr f(x) \le \tr f(y)$. 
\item If $x\prec_w y$ and $f$ is an strictly convex function such that $\tr(f(x))=\tr(f(y))$ then there exists a permutation $\sigma$ of $\IN{d}$ such that $y_i=x_{\sigma(i)}$ for $i\in \IN{d}\,$. 
\een
The notion of submajorization can be extended to the context of self-adjoint matrices as follows: given $S_1,\,S_2\in \mathcal H(d)$ we say that $S_1$ is {\bf submajorized} by $S_2$, denoted $S_1\prec_w S_2$,  if $\lambda(S_1)\prec_w \lambda(S_2)$. If $S_1\prec_w S_2$ and $\tr(S_1)=\tr(S_2)$ we say that $S_1$ is {\bf majorized} by $S_2$ and write $S_1\prec S_2$. Thus, $S_1\prec S_2$ if and only if $\lambda(S_1)\prec \lambda(S_2)$. Notice that (sub)majorization is an spectral relation between self-adjoint operators.

\section{Description and modeling of the main problems} \label{description of main prob}

We begin this section with a detailed description of our two main problems together with their motivations. In both cases we search for optimal frame designs (frame completions and duals), that are of potential interest in applied situations. In order to tackle these problems we obtain (see Sections \ref{Fick} and \ref{duales con rest}) equivalent versions of them in a matrix analysis context. In section \ref{unified} we present a unified matrix model and develop some notions and results that allow us to solve the two problems in frame theory (see Section \ref{la solucion frames}).

\def\n0{n_{ \text{\rm \tiny o}}}
\subsection{Frame completions with prescribed norms}\label{Fick}

We begin by describing the following frame completion problem posed in \cite{FicMixPot}. 
Let $\cH \cong \C^d$ and let $\cF_0=\{f_i\}_{i\in \IN{\n0}}
\in \cH^{\n0  }$ be a fixed (finite) sequence of vectors. Let $n>\n0  \,$ be an integer; denote by $k = n-\n0  \,$ and assume that $\rk S_{\cF_0} \ge d-k$. Consider a sequence $\ca= \{\alpha_i\}_{i\in \In} \in \R_{>0}^n\,$ such that $\|f_i\|^2=\alpha_i$ for every $i \in \IN{\n0  }\,$. 

With the fixed data from above, the problem posed in \cite{FicMixPot} is to find a sequence  
$\cF_1= \{f_i\}_{i=\n0  +1}^n\in \cH^{k}$ with $\|f_i\|^2=\alpha_i$, for   
$\n0  +1\leq i\leq n$, such that the the mean square error 
of the resulting completed frame $\cF= (\cF_0\coma \cF_1)= \{f_i\}_{i\in \IN{n}}\in\RS(n \coma d)$, 
namely  $\tr(S_\cF^{-1})$, is minimal among all possible such completions. It is worth pointing out that the mean square error of $\cF= (\cF_0\coma \cF_1)$ depends on $\cF$ through the eigenvalues $\lambda(S_\cF)$ of its frame operator.

\pausa

Note that there are other possible ways to measure robustness of the completed frame $\cF$ as above. 
For example, we can consider optimal (minimizing) completions, with prescribed norms, for the Benedetto-Fickus' potential. In this case we search for a frame $\cF= (\cF_0\coma \cF_1)= \{f_i\}_{i\in \IN{n}}\in\RS(n \coma d)$, with
$\|f_i\|^2=\alpha_i$ for $\n0  +1\leq i\leq n$, and such that its frame potential $\FP(\cF)=\tr(S_\cF ^2)$ is minimal  
among all possible such completions. As before, we point out that the frame potential of the resulting completed frame $\cF= (\cF_0\coma \cF_1)$ depends on $\cF$ through the eigenvalues $\lambda(S_\cF)$ of the frame operator of $\cF$. 

\pausa

Hence, in order to solve both problems above we need to give a step further in the classical frame completion problem (i.e. decide whether $\cF_0$ can be completed to a frame $\cF=(\cF_0,\cF_1)$ with prescribed norms and frame operator $S\in \matpos$) and search for {\it optimal} (e.g. minimizers of the mean square error or Benedetto-Fickus' frame potential) frame completions with prescribed norms.

\pausa

At this point a natural question arises as whether the minimizers corresponding to the mean square error and to the Benedetto-Fickus' potential, or even more general convex potentials, coincide (see \cite{BF,Phys,MR}). 
As we shall see, the solutions of these problems are independent of the particular choice of convex potential considered. Indeed, we show
that under certain hypothesis on the final sequence $\cb=\{\alpha_i\}_{i=\n0+1}^n$ (which includes the uniform case)  
we can explicitly compute the completing sequences $\cF_1 = \{f_i\}_{i=\n0  +1}^n\in \cH^k$ such that the frame operators of the completed sequences $\cF=(\cF_0,\cF_1)$  are minimal with respect to majorization (within the set of frame operators of all completions with norms prescribed by the sequence $\ca$). In order to do this, we begin by fixing some notations.

\begin{fed}\rm 
Let $\cF_0=\{f_i\}_{i\in \IN{\n0  }}\in \cH^{\n0  }$ and 
$\ca= \{\alpha_i\}_{i\in \In} \in \R_{>0}^n\,$ 
such that $d-\rk \, S_{\cF_0} \le n-\n0$ and 
$\|f_i\|^2=\alpha_i$, $i \in \IN{\n0  }\,$. We consider the sets
$$
\cC_\ca(\cF_0)=\big\{\, \{f_i\}_{i\in \IN{n}} \in \RS(n \coma d):
\{f_i\}_{i\in \IN{\n0  }}= \cF_0 \py \|f_i\|^2=\alpha_i \ \mbox{ for }  \ i \ge \n0  +1\,\big\}\ ,
$$ 
\beq
\cS\cC_\ca(\cF_0)=\{S_\cF:\ \cF\in \cC_\ca(\cF_0)\} 
\ . 
\EOEP
\eeq
\end{fed}

\pausa
In what follows we shall need the following solution of the classical frame completion problem.

\begin{pro}[\cite{Illi,MR0}]\label{frame mayo}\rm
Let $B\in \matpos$ with $\lambda(B)\in \R_{+}^d\,^\downarrow$ and let 
$\cb=(\beta_i)_{i\in\IN{k}} \in \R_{>0}^k\,$. Then there exists 
a sequence $\cW=\{g_i\}_{i\in\IN{k}}\in \cH^k$ with frame operator 
$S_\cW= B$ and such that $\|g_i\|^2=\beta_i$ for every $i\in\IN{k}\,$ 
if and only if 
$\cb\prec \lambda(B)$ (completing with zeros if $k\neq d$).
\QED
\end{pro}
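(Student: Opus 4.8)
The statement is a classical result (the "frame completion theorem" or equivalently a reformulation of the Schur–Horn theorem), so the plan is to reduce it to the Schur–Horn theorem together with the standard dictionary between sequences of vectors with prescribed norms and selfadjoint matrices with prescribed diagonal. First I would treat the easy direction: suppose $\cW=\{g_i\}_{i\in\IN{k}}\in\cH^k$ satisfies $S_\cW=B$ and $\|g_i\|^2=\beta_i$. Form the synthesis operator $T_\cW^*\in L(\C^k,\cH)$, so that $S_\cW=T_\cW^*T_\cW=B$. Then the $k\times k$ Gram matrix $G=T_\cW T_\cW^*=(\api g_j,g_i\cpi)_{i,j}$ is positive semidefinite, has diagonal entries $G_{ii}=\|g_i\|^2=\beta_i$, and — since $T_\cW^*T_\cW$ and $T_\cW T_\cW^*$ have the same nonzero spectrum — satisfies $\lambda(G)=(\lambda(B),0,\dots,0)\in\R_+^k{}^\downarrow$ (padding with $k-d$ zeros, which is legitimate precisely because $d\le k$; when $d=k$ there is no padding). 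By the Schur–Horn theorem, the diagonal of a selfadjoint matrix is majorized by its eigenvalue vector, hence $\cb=(\beta_i)_{i\in\IN{k}}\prec(\lambda(B),0,\dots,0)$, which is the asserted majorization "completing with zeros if $k\ne d$".

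For the converse, assume $\cb\prec(\lambda(B),0,\dots,0)=:\mu\in\R_+^k{}^\downarrow$. By the Schur–Horn theorem in its existence form, there is a selfadjoint matrix $G\in\mathcal H(k)$ with eigenvalue vector $\lambda(G)=\mu$ and diagonal $(G_{ii})_{i\in\IN{k}}=\cb$; since $\mu\geqp 0$, $G$ is positive semidefinite, and $\rk G=\rk B = d' \le d$ (where $d'=\#\{i:\lambda_i(B)>0\}$). Now factor $G=V^*V$ with $V\in\matrec{d',k}$ of full row rank $d'$; the columns $v_1,\dots,v_k\in\C^{d'}$ of $V$ then satisfy $\api v_j,v_i\cpi=G_{ij}$, so in particular $\|v_i\|^2=\beta_i$ and their frame operator $VV^*\in\mathcal H(d')$ has the same nonzero eigenvalues as $G$, namely the nonzero entries of $\mu$, i.e. the nonzero eigenvalues of $B$. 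Finally I would transport this picture into $\cH\cong\C^d$: choose an orthonormal basis $\{h_i\}_{i\in\IN d}$ of eigenvectors of $B$ ordered so that $B h_i=\lambda_i(B)h_i$, let $U\in\cI(d',d)$ be the isometry sending the standard basis of $\C^{d'}$ to $h_1,\dots,h_{d'}$ (a fixed ONB of $R(B)$), and set $g_i=U v_i\in\cH$. Then $\|g_i\|^2=\|v_i\|^2=\beta_i$, and $S_{\cW}=\sum_i g_i\otimes g_i=U(VV^*)U^*$ acts as the diagonal operator with eigenvalues $\lambda(B)$ on $R(B)$ and as $0$ on $R(B)^\perp$; comparing eigenvectors, $S_\cW=B$, so $\cW=\{g_i\}_{i\in\IN k}$ is the desired sequence.

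The only genuinely non-routine input is the Schur–Horn theorem itself (both the "diagonal $\prec$ spectrum" implication and its converse, existence of a matrix with prescribed diagonal and spectrum), which I would simply cite; everything else is bookkeeping. The one point to handle carefully is the zero-padding convention and the rank: the statement allows $k>d$, and one must be consistent about whether one works with the $d\times d$ frame operator $B$ or with its $k$-dimensional "inflation" $\mathrm{diag}(\lambda(B),0,\dots,0)$ when invoking Schur–Horn — the rank bookkeeping $\rk B=\rk G\le d$ is exactly what makes the passage between $\C^{d'}$, $\C^d$ and $\C^k$ consistent. A secondary subtlety is that the hypothesis only gives majorization of $\cb$ by $\lambda(B)$ after padding; when $k=d$ no padding is needed and the two formulations coincide, and for $k>d$ the extra zeros in $\mu$ are harmless for positivity and rank. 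I expect the main obstacle, such as it is, to be merely presenting this reduction cleanly rather than any real mathematical difficulty, since both directions are essentially restatements of Schur–Horn via the Gram/synthesis correspondence; indeed this is why the authors attribute it to \cite{Illi,MR0} rather than proving it here.
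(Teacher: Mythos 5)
The paper does not prove Proposition \ref{frame mayo}; it is cited from \cite{Illi,MR0} and stated with an immediate \QED. Your overall strategy (Schur--Horn plus the synthesis/Gram-matrix dictionary, factoring the Gram matrix, and transporting back into $\cH$) is indeed the standard argument underlying the cited references, so the approach is the right one. However, there are two points that need repair.

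The real gap is in the last step of the converse. You factor $G=V^*V$ with $V\in\matrec{d',k}$, set $g_i=Uv_i$, and claim $S_\cW=U(VV^*)U^*=B$ ``comparing eigenvectors.'' This does not follow: $VV^*\in\mathcal H(d')$ is merely \emph{some} positive matrix with eigenvalues $\lambda_1(B),\dots,\lambda_{d'}(B)$, and $U(VV^*)U^*$ equals $B$ only if $VV^*$ is the diagonal matrix $\mathrm{diag}\bigl(\lambda_1(B),\dots,\lambda_{d'}(B)\bigr)$ in the standard basis of $\C^{d'}$ --- sharing range, kernel and spectrum is not enough to force equality of two positive operators. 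The fix is standard but must be said: the factorization $G=V^*V$ is not unique, so pick a unitary $W\in\mathcal U(d')$ diagonalizing $VV^*$ (ordered decreasingly) and replace $V$ by $WV$. This leaves $V^*V=G$ and each column norm $\|v_i\|$ unchanged, while now $VV^*=\mathrm{diag}\bigl(\lambda_1(B),\dots,\lambda_{d'}(B)\bigr)$, and then $U(VV^*)U^*=\sum_{i=1}^{d'}\lambda_i(B)\,h_i\otimes h_i=B$ as desired.

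A secondary omission: you silently assume $d\le k$ (the padding with zeros goes on the spectrum side). The proposition, as used in Proposition \ref{con el la y mayo}, allows $k<d$, in which case one pads $\cb$ with $d-k$ zeros. In that regime a preliminary observation is needed: since $\cb>0$ and $\tr\cb=\tr\lambda(B)$, the majorization $(\cb,0_{d-k})\prec\lambda(B)$ forces $\lambda_{k+1}(B)=\dots=\lambda_d(B)=0$, i.e.\ $\rk B\le k$; then one applies Schur--Horn on $\C^k$ with target spectrum $(\lambda_1(B),\dots,\lambda_k(B))$ and the rest of your construction goes through verbatim. This is routine, but worth a sentence since the proposition is explicitly stated to cover $k\ne d$ in either direction.
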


\pausa
Since our criteria for optimality of frame completions will be based on majorization,
our analysis of the completed frame $\cF= (\cF_0\coma \cF_1)$ will depend on $\cF$ through $S_\cF$. Hence, the following description of $\cS\cC_\ca(\cF_0)$ plays a central role in our approach.

\begin{pro}\label{con el la y mayo}
Let $\cF_0=\{f_i\}_{i\in \IN{\n0  }}\in \cH^{\n0  }$ and 
$\ca= \{\alpha_i\}_{i\in \In} \in \R_{>0}^n\,$ such that $\|f_i\|^2=\alpha_i$, $i \in \IN{\n0  }\,$. 
Then, we have that 
$$
\cS\cC_\ca(\cF_0) =
\big\{S\in \gld \, :\, 
S\geq S_0  \py (\alpha_i)_{i=\n0  +1}^n \prec \lambda(S-S_0)  
\big\}
\ .
$$ 
In particular, if we let $k=n-\n0  $ then we get the inclusion
\beq\label{inc modelo}
\cS\cC_\ca(\cF_0) \inc\{ S_{\cF_0}+B:\ B\in \matpos \, , \ 
\rk \, B \le k \, , \ \tr(S_{\cF_0}+B)=\sum_{i=1}^n \alpha_i\}\ .
\eeq
\end{pro}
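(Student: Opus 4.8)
The plan is to prove the set equality by a double inclusion, using Proposition \ref{frame mayo} as the main engine. For the inclusion $\subseteq$, let $\cF=(\cF_0,\cF_1)\in\cC_\ca(\cF_0)$ with $\cF_1=\{f_i\}_{i=\n0+1}^n$, and set $S=S_\cF$. Since the frame operator is additive over the concatenation of sequences, we have $S_\cF=S_{\cF_0}+S_{\cF_1}$, where $S_{\cF_1}=\sum_{i=\n0+1}^n f_i\otimes f_i\in\matpos$; hence $S\geq S_0:=S_{\cF_0}$. Moreover $\cF$ is a frame, so $S\in\gld$. It remains to see that $(\alpha_i)_{i=\n0+1}^n\prec\lambda(S-S_0)$: this is exactly the necessity direction of Proposition \ref{frame mayo} applied to the sequence $\cF_1$, whose frame operator is $S-S_0$ and whose squared norms are $\|f_i\|^2=\alpha_i$ (here one uses $k=n-\n0$, completing the norm vector with zeros if $k\neq d$, which does not affect the majorization relation since appending zeros to both sides preserves $\prec$).

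For the reverse inclusion $\supseteq$, let $S\in\gld$ satisfy $S\geq S_0$ and $(\alpha_i)_{i=\n0+1}^n\prec\lambda(S-S_0)$. Put $B=S-S_0\in\matpos$. By the sufficiency direction of Proposition \ref{frame mayo}, the majorization $(\alpha_i)_{i=\n0+1}^n\prec\lambda(B)$ (again completing with zeros if necessary) guarantees the existence of a sequence $\cF_1=\{f_i\}_{i=\n0+1}^n\in\cH^k$ with $S_{\cF_1}=B$ and $\|f_i\|^2=\alpha_i$ for $\n0+1\leq i\leq n$. Then $\cF=(\cF_0,\cF_1)$ satisfies $\|f_i\|^2=\alpha_i$ for all $i\in\IN n$ and $S_\cF=S_{\cF_0}+B=S\in\gld$, so $\cF\in\RS(n\coma d)$ and hence $\cF\in\cC_\ca(\cF_0)$; thus $S=S_\cF\in\cS\cC_\ca(\cF_0)$. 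A minor point to record is that the standing hypothesis $d-\rk S_{\cF_0}\leq n-\n0=k$ is what makes $\cC_\ca(\cF_0)$ nonempty in the first place and is compatible with the constraint $\rk B\leq k$ appearing below; it should be noted explicitly that it is not needed for the set-theoretic identity itself, only to ensure the sets are not vacuous.

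Finally, for the inclusion \eqref{inc modelo}: given $S\in\cS\cC_\ca(\cF_0)$, write $S=S_{\cF_0}+B$ with $B=S-S_0\in\matpos$ as above. Since $B=S_{\cF_1}=\sum_{i=\n0+1}^n f_i\otimes f_i$ is a sum of at most $k=n-\n0$ rank-one operators, $\rk B\leq k$. The trace condition follows from $\tr(S_\cF)=\sum_{i=1}^n\|f_i\|^2=\sum_{i=1}^n\alpha_i$ (using $\|f_i\|^2=\alpha_i$ for $i\leq\n0$ as well), which gives $\tr(S_{\cF_0}+B)=\sum_{i=1}^n\alpha_i$. This completes the proof.

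The argument is essentially a bookkeeping translation of Proposition \ref{frame mayo}, so there is no serious obstacle; the only point requiring a little care is the zero-padding convention in the majorization relation when $k\neq d$, i.e.\ checking that Proposition \ref{frame mayo} is being invoked with the correct ambient dimension and that appending zeros to the norm vector is harmless for $\prec$. \QED
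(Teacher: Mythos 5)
Your proof is correct and follows the same route as the paper: decompose $S_\cF = S_{\cF_0} + S_{\cF_1}$, apply Proposition \ref{frame mayo} to $B = S - S_0$ in both directions to get the set equality, and read off the rank and trace conditions for \eqref{inc modelo} directly from the fact that $B = S_{\cF_1}$ is a sum of $k$ rank-one operators with $\tr S_\cF = \sum_i \|f_i\|^2$. The extra remarks on zero-padding and on the role of the standing hypothesis $d - \rk S_{\cF_0} \le k$ are sensible bookkeeping but do not change the argument.
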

\proof 
Observe that if $\cF = (\cF_0\coma \cF_1) \in \RS(n\coma d)$, 
then $S_{\cF} = S_{\cF_0} + S_{\cF_1}\,$. 
Denote by $ S_0 = S_{\cF_0}$ and 
 $B = S-S_0\,$, for any $S\in \gld$. 
Applying Proposition \ref{frame mayo} to the matrix $B$
(which must be nonnegative if $S\in \cS\cC_\ca(\cF_0)\,$), 
we get the first equality. 

\pausa
The inclusion in Eq. \eqref{inc modelo}
follows using that, if 
$\cF = (\cF_0\coma \cF_1) \in \RS(n\coma d)$, then 
$\rk \, B = \rk S_{\cF_1} \le k = d-(d-k)$. On the other hand, 
recall that $\tr(S_{\cF})=\sum_{i=1}^n \|f_i\|^2$.
\QED

\subsection{Dual frames of a fixed frame with tracial restrictions}\label{duales con rest}

Let $\RSV  \in \Fnd$. Then $\cF$ induces and encoding-decoding scheme as described in 
Eq. \eqref{ec recons}, in terms of the canonical dual $\cF^\#$. But, in case that $\cF$ has nonzero redundancy then we get a family of reconstruction formulas in terms of different frames that play the role of the canonical dual. In what follows we say that $\RSW\in \Fnd$ is a {\bf dual} frame 
for $\cV$ if $ T_\cW^*\, T_\cV = I_\cH\, $ (and hence $ T_\cV^*\, T_\cW = I_\cH\, $), 
or equivalently if the following reconstruction formulas hold:
$$x  =\sum\subim  \api x\coma f_i\cpi  \, g_i =\sum\subim  \api x\coma g_i\cpi  \, f_i   \peso{for every} x\in \H \ .$$
We denote  by  
$$\cD(\cV) \igdef \{\cW\in \Fnd: T_\cW^*\, T_\cV = I_\cH\,\}$$ 
the set of all dual frames for $\cV$. 
Observe that  $\cD(\cV) \neq \vacio$ since $\cV^\#\in \cD(\cV) $.

\pausa
Notice that the fact that $\RSV\in  \Fnd$ implies that $T_\cV^*$ is surjective.
 In this case, a sequence  $\cW\in \cD(\cV)$ \sii  its synthesis 
operator $T_\cW^*$ is a pseudo-inverse of $T_\cV\,$. 
Moreover,  
the synthesis operator  $T_{\cV^\#}^*$ of the canonical dual $\cV^\#$ 
corresponds to the Moore-Penrose pseudo-inverse of $T_\cV\,$. Indeed, notice that 
$T_{\cV}\,T_{\cV^\#}^* = T_{\cV}\, S_{\cV}\inv T_{\cV}^* 
\in L(\C^n)^+$, so that it  
is an orthogonal projection. 
From this point of view, the canonical dual $\cV^\#$ has some optimal properties 
that come from the theory of pseudo-inverses. 
Nevertheless, the canonical dual frame might not be the optimal choice for a dual frame from an applied point of view. 
For example, it is well known that there are classes of structured frames that admit alternate duals that share this structure but for which their canonical duals are not structured (\cite{BLagreg,WES}); in the theory of signal transmission through noisy channels, it is well known that there are alternate duals that perform better than $\cF^\#$ (\cite{LeHanagre,LoHanagre,MRS3}) when we assume that the frame coefficients can be corrupted by the noise in the channel. There are other cases in which $\cF^\#$ may be ill-conditioned or simply too difficult to compute: for example, it is known (see \cite{Han}) that under certain hypothesis we can find Parseval dual frames $\cW\in \cD(\cV)$ (i.e. such that $S_\cW= I_\hil$), which lead to more stable reconstruction formulas for vectors in $\hil$.

In the general case, we can measure the stability of the reconstruction formula induced by a dual frame $\cW\in \cD(\cV)$ in terms of the spread  of the eigenvalues of the frame operator $S_\cW$; this can be seen if we consider, as it is usual in applied situations, the condition number of $S_\cW$ as a measure of stability of linear processes that depend on $S_\cW\,$. There are  
finer measures of the dispersion  which take into account all the eigenvalues of $S_\cW\,$, if one restricts 
to the case of fixed trace. As an example of such a measure we can mention the Benedetto-Fickus' potential. 
Our approach based on majorization - which is the structural measure of the spread of 
eigenvalues for matrices with a fixed trace - allows us to show that minimizers with respect to a large class of convex potentials coincide. 
The main advantages of considering the partition of $\cD(\cF) $ into slices determined by the trace condition $\tr(S_\cW)=t$ are:
\bit
\item There exists a unique vector $\nu(t)$ 
of eigenvalues which is minimal for majorization among the vectors $\la(S_\cW)$, for dual frames $\cW \in \cD(\cF) $ with  
with $\tr\, S_\cW = t$. 
\item Moreover, the vector $\nu(t)$ is also submajorized by the vectors $\la(S_\cW)$ for every 
$\cW \in \cD(\cF) $ with with $\tr\, S_\cW \ge t$. 
\item The map $t \mapsto \nu(t)$ is 
increasing (in each entry) and continuous.
\item Continuous sections $t\mapsto \cW_t \in \cD(\cF)$ such that $\la(\cW_t) 
= \nu(t)$ can be computed.   
\item In addition, the condition number
of $\nu(t)$ decreases when $t$ grows until a critical point (which is easy to compute).
\eit
We point out that both the vector $\nu(t)$ and the duals $\cW_t$ can be computed explicitly in terms of implementable algorithms.
In order to obtain a convenient formulation of the problem we consider the following notions and simple facts.

\begin{fed} \rm
Let $\cV\in \Fnd$. We denote by 
$$\ese \cD(\cV)= \{S_\cW: \cW \in \cD(\cV)\}$$
the set of frame operators of all dual frames for $\cV$.  
\EOE
\end{fed}

\begin{pro} \label{los S 1}
Let $\cV \in \RS(\k \coma d)$. Then 
\beq\label{los S eq 1}
\ese\cD(\cV) = 
\{ S_{\cV^\#} + B:\ B\in \matpos \ \mbox{ \rm and } \ \rk \, B 
\le n-d\} \ .
\eeq
\end{pro}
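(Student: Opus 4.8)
The statement to prove is that $\ese\cD(\cV) = \{ S_{\cV^\#} + B : B\in \matpos,\ \rk\, B \le n-d\}$. The plan is to prove the two inclusions separately, using the characterization of dual frames via pseudo-inverses of $T_\cV$ that was recalled just before the statement. Throughout I write $T = T_\cV \in L(\cH,\C^n)$, which is injective with $R(T)^\perp = \ker T^*$ of dimension $n-d$, and $P = T\,S_\cV\inv T^* \in L(\C^n)^+$, which (as noted in the text) is the orthogonal projection onto $R(T)$.

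\textbf{The inclusion $\inc$.} Let $\cW \in \cD(\cV)$, so $T_\cW^* T = I_\cH$, i.e. $T_\cW^*$ is a pseudo-inverse of $T$. Write $T_\cW = T_{\cV^\#} + (T_\cW - T_{\cV^\#})$ and set $R = T_\cW - T_{\cV^\#} \in L(\cH,\C^n)$. Since both $T_\cW^* T = I_\cH$ and $T_{\cV^\#}^* T = I_\cH$, we get $R^* T = 0$, that is $R(R) \inc \ker T^* = R(T)^\perp$. On the other hand, the key identity from the text is $T_{\cV^\#} = T\,S_\cV\inv$, hence $R(T_{\cV^\#}) \inc R(T)$. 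Therefore $T_{\cV^\#}^* R = 0$ as well (the ranges are orthogonal). Expanding,
$$
S_\cW = T_\cW^* T_\cW = (T_{\cV^\#}^* + R^*)(T_{\cV^\#} + R) = S_{\cV^\#} + T_{\cV^\#}^* R + R^* T_{\cV^\#} + R^* R = S_{\cV^\#} + R^* R\ ,
$$
and $B := R^* R \in \matpos$ with $\rk\, B = \rk\, R \le \dim R(R) \le \dim R(T)^\perp = n-d$.

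\textbf{The inclusion $\Inc$.} Conversely, let $B\in \matpos$ with $\rk\, B \le n-d$. Write $B = R^* R$ for some $R \in L(\cH,\C^n)$ with $R(R) \inc R(T)^\perp = \ker T^*$ (possible since $\rk\, B \le n - d = \dim R(T)^\perp$: take a square root of $B$ and conjugate its range into $R(T)^\perp$ by a partial isometry, i.e. choose an isometry of $R(B\rai)$ into $R(T)^\perp$). Set $T_\cW := T_{\cV^\#} + R$ and let $\cW$ be the frame whose synthesis operator is $T_\cW^*$. Since $R^* T = 0$ (as $R(T) = (\ker T^*)^\perp \perp R(R)$... more precisely $T^* R = 0$ because $R(R)\inc\ker T^*$), we get $T_\cW^* T = T_{\cV^\#}^* T + R^* T = I_\cH + 0 = I_\cH$, so $\cW \in \cD(\cV)$; and exactly as in the first part $T_{\cV^\#}^* R = 0 = R^* T_{\cV^\#}$ gives $S_\cW = S_{\cV^\#} + R^* R = S_{\cV^\#} + B$. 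One should also check $\cW \in \Fnd$, i.e. that $T_\cW$ is bounded below; this is automatic since $T_\cW^* T_\cW = S_{\cV^\#} + B \ge S_{\cV^\#} \in \glh^+$, so $S_\cW$ is invertible and $\cW$ is a frame.

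\textbf{Main obstacle.} The computations themselves are routine; the one point requiring a little care is the orthogonality bookkeeping, namely that $R(T_{\cV^\#}) \inc R(T)$ so that the cross terms $T_{\cV^\#}^* R$ vanish whenever $R(R) \inc R(T)^\perp$ — this is exactly where the identity $T_{\cV^\#} = T\,S_\cV\inv$ (equivalently, that $T_{\cV^\#}^*$ is the Moore--Penrose pseudo-inverse, with $T T_{\cV^\#}^*$ the orthogonal projection onto $R(T)$) is used in an essential way. The other mild subtlety is the realizability step in the $\Inc$ direction: turning an abstract $B \in \matpos$ of rank $\le n-d$ into $R^* R$ with $R$ landing inside the $(n-d)$-dimensional subspace $R(T)^\perp$, which is a straightforward polar-decomposition argument.
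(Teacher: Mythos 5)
Your proof is correct and follows essentially the same route as the paper's: decompose $T_\cW = T_{\cV^\#} + R$ with $R^*T_\cV = 0$, observe via $T_{\cV^\#} = T_\cV S_\cV^{-1}$ that the cross terms vanish, conclude $S_\cW = S_{\cV^\#} + R^*R$ with $\rk R^*R \le n-d$, and reverse the construction for the other inclusion. You spell out a couple of details the paper leaves implicit (the realizability of $B$ as $R^*R$ with $R(R)\inc R(T_\cV)^\perp$, and that the resulting $\cW$ is indeed a frame), but the argument is the same one.
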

\proof
Given $\cW \in \Fnd $, then 
$\cW \in \cD(\cV)\iff Z = T_\cW - T_{\cV^\#} \in \lhcn$ 
satisfies $Z^* T_\cV=0$. In this case, by Eq. \eqref{SMP}, we know that 
 $T_{\cV^\#}= T_\cV \, S_\cV\inv \implies Z^* T_{\cV^\#}=0$,  and  
$$
S_\cW = (T_{\cV^\#}+Z)^*\,(T_{\cV^\#}+Z)= 
S_{\cV^\#} + B = S_\cV\inv+B \ , \peso{where} 
B = Z^*Z \in \matpos \ .
$$
Moreover, $S_\cV = T_\cV^*T_\cV \in \gld  \implies \rk \, T_\cV = d$,  
and the equation $T_\cV ^*Z= 0$ implies that 
$$
R(Z ) \inc \ker T_\cV^* = R(T_\cV)\orto \implies \rk \, B 
=   \rk (Z^*Z) = \rk\, Z \le  n-d 
\ .
$$
Since any $ B\in \matpos $ with $\rk \, B \le n-d $ 
can be represented as $B=Z^*Z$ for some  $Z \in L(\cH\coma R(T_\cV)\orto)$, 
we have proved Eq. \eqref{los S eq 1}. 

\QED

\medskip

Fix a system $\RSV \in \Fnd$. Notice that Proposition \ref{los S 1} shows that if $\cW\in \cD(\cV)$ then $S_{\cV^\#}\leq S_\cW$, which is a strong minimality property of the frame operator of the canonical dual $\cF^\#$. 
As we said befire, we are interested in considering alternate duals that are more stable than $\cF^\#$. 
In order to do this, we consider the set $\cD_t(\cV)$ of dual frames $\cW\in \cD(\cV)$ with a further restriction, namely that $\tr(S_\cW)\geq t$ for some $t\geq \tr(S_{\cV^\#})$.
Therefore, the problem we focus in is to find dual frames $\cW_t\in \cD_t(\cV)$ such that 
their frame operators $S_{\cW_t}$ are minimal with respect to submajorization within the set 
\beq\label{defi esecdt} 
\ese\cD_t(\cV) \igdef \{S_\cW: \cW \in \cD_t (\cV)\}\ .
\eeq Notice that as an immediate consequence of Proposition \ref{los S 1} we get the identity
\beq\label{rev defi sdtv}
\ese\cD_t(\cV)=\{ S_{\cV^\#} + B:\ B\in \matpos \ , \ \rk \, B 
\le n-d \ , \ \tr(S_{\cV^\#} + B)\geq t \} \ .
\eeq
As we shall see, these optimal duals $\cW_t$ decrease the condition number and, in some cases are even tight frames. Moreover, because of the relation between submajorization and increasing convex functions, our optimal dual frames $\cW_t\in \cD_t(\cV)$ are also minimizers of a family of convex frame potentials (see Definition \ref{pot generales} below) that include the Benedetto-Fickus' frame potential.

\subsection{A unified matrix model for the frame problems and submajorization}\label{unified}

In this section we introduce and develop some aspects of a set 
$U_t(S_0\coma m)\inc\matpos$ that will play an essential role 
in our approach to the frame problems described above (see Remark \ref{estan en U}). Our main results related with $U_t(S_0\coma m)$
are Theorem \ref{el St general} and Proposition \ref{pdefi del nulam}. 
In order to avoid some technicalities, we postpone their proofs to the Appendix (Section \ref{appendixiti}).

\begin{fed}\label{conjs ese}\rm 
Let $S_0\in \matpos$ with $\lambda(S_0)=\lambda\in 
\R_{+}^d\,^\downarrow\,$,    
$ t_0= \tr \,S_0\,$,  and $t\ge t_0\,$. For any integer 
$ m< d$ we consider the following subset of $\matpos$ :
\beq\label{defi usubt}
U_t(S_0\coma m)=\{S_0+B:\   B\in \matpos \, , \ \rk \, B 
\le d-m \ , \ \tr(S_0+B)\geq t\ \} \ .
\eeq
Observe that if $m\le 0$ then $U_t(S_0\coma m) = 
\{S\in \matpos \, : \,  S\ge S_0\, , \ \tr(S)\geq t\,\}$. \EOE
\end{fed}

\begin{rem}\label{estan en U}
As a consequence of Eq. \eqref{inc modelo} and Eq. \eqref{rev defi sdtv} we see that the 
two main problems are intimately related with the structure of the set $U_t(S_0\coma m)$ 
for suitable choices of the parameters $S_0\in\matpos$, $m<d$ and $t\geq \tr \, S_0\,$ : 
\ben 
\item Note that Eq. \eqref{inc modelo} shows that $\cS\cC_\ca(\cF_0) \inc U_t(S_{\cF_0}\coma m)$, 
where $t=\tr \ca$ and $m=d-n+\n0  \,$. 
\item Similarly, 
 Eq. \eqref{rev defi sdtv} shows that identity $\ese\cD_t(\cV)=U_t(S_{\cF^\#}\coma m)$ where $m=2d-n$.\EOE
 \een
\end{rem}

\begin{rem}\label{rem imag espec usubt}
Given $\la = \la(S_0)\in \R_{+}^d\,^\downarrow $ 
and $m<d$ we look for a $\prec_w$-minimizer  on the set 
\beq\label{defi imag espec usubt}
\Lambda (U_t(S_0\coma m)\,) \igdef \{ \la(S): S\in U_t(S_0\coma m)\}
\inc \R_{+}^d\,^\downarrow \ .
\eeq
Heuristic computations suggest that 
in 	some cases such a minimizer should have the form 
$$
\nu = (\la_1\coma \dots\coma \la _r\coma c\coma \dots c) \in \R_{>0}^d\,^\downarrow 
\quad \mbox{with \ \ $\tr \, \nu = t$ \ \ \ for some \quad $r \in \IN{d-1}$ \ and \  $c\in \R_{>0}$}\ .
$$
Observe that if $\nu \in \Lambda (U_t(S_0\coma m)\,)$ 
 then $ \la \leqp \nu = \nu^\downarrow\,$. 
Hence we need that 
$$
c= \frac{ t-\sum_{j=1}^r\lambda_j }{d-r} \py 
\la_{r+1}\le c \le \la_{r} \ . 
$$
These restrictions on the numbers $r$ and $c$ suggest the following definitions: 
\EOE
\end{rem}

\begin{fed}\label{defi irregularidad} \rm
Let $\lambda\in \R_{+}^d\,^\downarrow$ and $t\in \R$ such that 
$ \tr \lambda \le t < d \, \la_1\,$. 
Consider the set 
$$
A_\la(t) \igdef \big\{\, r\in\IN{d-1} : 
\ p_\la(r\coma t) \igdef \ \frac{\barr{rl} & t-\sum_{j=1}^r\lambda_j \earr}{d-r}\ 
\geq \lambda_{r+1} \ \big\} \ .
$$ 
Observe that $ t\ge \tr \la \implies t-\suml_{j=1}^{d-1}\lambda_j\geq \lambda_{d}\,$, so that 
$d-1\in A_\la(t)\neq \vacio\,$.
The $t$-irregularity of the ordered vector $\lambda$, denoted $r_\la(t)$, is defined by 
\beq\label{el rt}
r_\la(t)  \igdef \min \, A_\la(t) = \min \{r\in\IN{d-1} : p_\la(r\coma t) 
\geq \lambda_{r+1}\}\ . 
\eeq
If $t\ge d\,\la_1\,$, we set $r_\la(t) \igdef 0$ and $p_\la(0\coma t) = t/d\ $. \EOE
\end{fed}

\pausa
For example, if $t_0 = 
\tr \,\la\,$ then for every $r \in \IN{d-1}$ we have that 
$$
p_\la(r\coma t_0) = \frac{t_0-\sum_{j=1}^r\lambda_j}{d-r} = 
\frac{\sum_{j=r+1}^d\lambda_j}{d-r} \ge \la_{r+1} 
\iff \la_{r+1} = \la_d \ .
$$
Therefore in this case 
\bit 
\item If $\la = c\,\uno_d$ for some $c\in \R_{>0}\,$,  then $r_\la(t_0) = 0$. 
\item If $\la_1>\la_d\,$, then 
\beq\label{t = tr} 
r_\la(t_0)+1  = \min \{i \in \IN{d} : \la_i = \la_d\} \py 
r_\la(t_0) = \max\{r\in  \IN{d-1} : \la_r>\la_d\} \ .
\eeq 
\eit

\begin{fed}\label{defi r c}\rm
Let $\lambda\in \R_{+}^d\,^\downarrow$ and $ t_0 = \tr \la$. 
We define the functions \begin{equation}\label{defi rs}
r_\la :[t_0\coma +\infty)\rightarrow \{0,\ldots,d-1\}
\peso{given by} r_\la(s)\ \stackrel{\eqref{el rt}}{=} \text{ \  the $s$-irregularity of }\lambda 
\end{equation}
\begin{equation}\label{defi cs}
c_\la:[t_0\coma +\infty)\rightarrow \R_{\geq 0}\peso{given by} 
c_\la(s) = p_\la(r_\la(s)\coma s)= \frac{s-\sum_{i=1}^{\,r_\la(s)}\lambda_i}{d-r_\la(s)}  \ \ ,
\end{equation}
for every $s\in [t_0\coma +\infty)$, where we set 
$\suml_{i=1}^{0}\lambda_i =0$. 
\EOE
\end{fed}

\pausa
Fix $\lambda\in \R_{+}^d\,^\downarrow$. As we shall show in Lemma \ref{r y c}, 
the vector 
$\nu = (\la_1\coma \dots\coma \la _{r_\la(t)} \coma c_\la(t) \, \uno_{d-r_\la(t)}) 
\in \R_{>0}^d\,^\downarrow $ for every $t\ge t_0\,$,  
and the map $c_\la$ is piece-wise linear, strictly increasing and continuous. This last claim allows us to introduce the following parameter: given $m\in \IN{d-1}\,$ we denote by 
\beq\label{el s* def} 
s^*=s^*(\la\coma m) \igdef c_\la\inv (\la_m)= \suml_{i=1}^{m} \la_i + (d-m) \, \la_{m} 
\eeq 
that is, the unique $s\in [t_0\coma+\infty)$ such that $c_\la(s) = \la_m\,$. These facts and other 
results of Section \ref{appendixiti} give consistency to the following definitions:

\begin{fed}\label{el d y r prima} \rm
Let $\lambda\in \R_{+}^d\,^\downarrow$, 
$ t_0= \tr \lambda $. Take an integer  $m<d$.  
If $m> 0$ 
and $t\in [t_0 \coma +\infty)$ let
$$
c_{\la\coma m}(t) \igdef \begin{cases} \ c_\la(t) & \mbox{if} \ \ t\le s^*  \\&\\
\la_m+\frac{t-s^*}{d-m} & \mbox{if} \ \ t> s^* 
\end{cases} \quad \quad \py
$$
$$
r_{\la\coma m}(t) \igdef 
\min\limits \{r\in\IN{d-1} \cup \trivial : c_{\la\coma m}(t)  
\geq \lambda_{r+1}\}
  \ \ .
$$
If $m\le 0$ and $t\in [t_0 \coma +\infty)$ we define $c_{\la\coma m}(t) = c_{\la}(t) $ 
and $r_{\la\coma m}(t) = r_{\la}(t) $. \EOE
\end{fed}

\pausa
Note that, by Eq. \eqref{circular} of Lemma \ref{r y c}, 
$r_{\la\coma m}(t)= r_{\la}(t)$ for every $t\le s^*\,$. 
The following results will be used throughout Section \ref{la solucion frames}; see the Appendix (Section \ref{appendixiti}) for their proofs.
 
\begin{teo}\label{el St general}
Let $S_0\in \matpos$ with  $\la = \la(S_0)$ and $m<d $ be an integer.  
For $t\ge \tr \, S_0\,$, let us denote by 
 $r' =\max\{ r_{\la\coma m}(t), m\}$ and $c = c_{\la\coma m}(t)$. 
 Then, there exists $\nu  \in \Lambda(U_t(S_0\coma m)\,)$  such that 
\ben
\item The vector $\nu$ is $\prec_w$-minimal in $\Lambda(U_t(S_0\coma m)\,)$,  i.e. $\nu\prec_w\mu$ for every $\mu\in\Lambda(U_t(S_0\coma m)\,)$. 
\item 
 For every matrix 
$S  \in U_t(S_0\coma m)$ the following conditions are equivalent:
\rm
\ben
\item \it  $\lambda(S) = \nu$ (i.e. $S$ is $\prec_w$-minimal in $U_t(S_0\coma m)$). 
\rm
\item 
\it 
There exists $\{v_i\}_{i\in \IN{d}}\,$, an ONB  
of eigenvectors for  $S_0 \coma \la$  such that   
\beq\label{Formula de B}
B= S-S_0 =  \suml_{i=1}^{d-r'} (c -\la _{r'+i}) \, 
v_{r'+i} \otimes v_{r'+i}  \ .
\eeq
\een
\item \it 
If we further assume any of the following conditions: 
\bit
\item  $m\le 0$,
\item $m\ge1$ and  $\la_m >\la_{m+1} \,$, or 
\item  $m\ge1$ and  $\la_m =\la_{m+1} \,$ but 
$t\le s^*(\la\coma m) $ (see Eq. \ref{el s* def}), 
\eit 
then $B$ and $S$ are unique. 
Moreover, in these cases Eq. \eqref{Formula de B} holds for any  ONB 
of eigenvectors of $S_0$ as above. \QED
\een
\end{teo}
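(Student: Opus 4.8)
The plan is to establish the theorem in three stages matching its three numbered parts, reducing the matrix statement to the spectral statement proved in the Appendix (Proposition \ref{pdefi del nulam}) and then lifting back to matrices. First I would recall, from Remark \ref{rem imag espec usubt} and Definitions \ref{defi irregularidad}--\ref{el d y r prima}, that the candidate vector is
$$
\nu = \big(\la_1\coma\dots\coma\la_{r'}\coma c\coma\dots\coma c\big)\in\R_{>0}^d\,^\downarrow\coma
\peso{where} r'=\max\{r_{\la\coma m}(t)\coma m\}\coma\quad c=c_{\la\coma m}(t)\coma
$$
and $\tr\nu=t$. The first task is to check $\nu\in\Lambda(U_t(S_0\coma m))$: fixing any ONB $\{v_i\}_{i\in\IN d}$ of eigenvectors for $S_0\coma\la$ and setting $B=\sum_{i=1}^{d-r'}(c-\la_{r'+i})\,v_{r'+i}\otimes v_{r'+i}$, one verifies $B\geq 0$ (because $c\geq\la_{r'+1}$ by the definition of $r_{\la\coma m}(t)$ together with $c\geq\la_m$ when $r'=m$), that $\rk B\leq d-r'\leq d-m$, and that $S=S_0+B$ has $\tr S=t\geq t$ and eigenvalue list exactly $\nu$ — the point being that $S_0$ and $B$ are simultaneously diagonalized in the basis $\{v_i\}$, so the top $r'$ eigenvalues stay $\la_1\coma\dots\coma\la_{r'}$ and the remaining ones become $c$. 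Hence $S\in U_t(S_0\coma m)$ and $\nu\in\Lambda(U_t(S_0\coma m))$.

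For part (1), the $\prec_w$-minimality of $\nu$ in $\Lambda(U_t(S_0\coma m))$ is exactly the content of the spectral analysis carried out in the Appendix; I would invoke Proposition \ref{pdefi del nulam} (the result promised for $\nu_{\la\coma m}$) to conclude $\nu\prec_w\mu$ for every $\mu\in\Lambda(U_t(S_0\coma m))$. The only thing to note here is the bookkeeping identity $\nu=\nu_{\la\coma m}(t)$ with the parameters $r'$ and $c$ as defined, which follows by unwinding Definition \ref{el d y r prima} and the fact (from Lemma \ref{r y c}) that $r_{\la\coma m}(t)=r_\la(t)$ for $t\leq s^*$ while for $t>s^*$ the truncation forces $r'=m$ and $c=\la_m+\frac{t-s^*}{d-m}$.

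For part (2), the implication (a)$\Rightarrow$(b): suppose $S\in U_t(S_0\coma m)$ has $\la(S)=\nu$. Write $S=S_0+B$ with $B\geq 0$, $\rk B\leq d-m$. Since $\la(S)=\nu$ and $\la(S)\geqp\la(S_0)=\la$ (because $S\geq S_0$ forces $\la_i(S)\geq\la_i(S_0)$ by Weyl), equality $\la_i(S)=\la_i=\la_i(S_0)$ holds for $i=1\coma\dots\coma r'$. The key step is an interlacing/rigidity argument: $S\geq S_0\geq 0$ with the top $r'$ eigenvalues of $S$ equal to the top $r'$ eigenvalues of $S_0$ forces the corresponding top eigenspaces to be nested and $B$ to vanish on them; more precisely, the eigenvectors of $S$ for $\la_1\coma\dots\coma\la_{r'}$ can be chosen to be eigenvectors of $S_0$ for the same eigenvalues, and on the complementary $(d-r')$-dimensional subspace $S$ acts as $c\cdot I$. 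Diagonalizing $B=S-S_0$ in a common ONB $\{v_i\}$ of eigenvectors of $S_0\coma\la$ adapted to this splitting then gives precisely Eq. \eqref{Formula de B}. The converse (b)$\Rightarrow$(a) is the direct computation already done above. This rigidity step — turning the scalar equality of eigenvalues into the statement that $B$ is supported on the bottom block in an $S_0$-eigenbasis — is the main obstacle, and it is where one needs $c\geq\la_{r'+1}$ (so that the bottom block is genuinely where $B$ lives) and where the case $r'=m$ versus $r'=r_{\la\coma m}(t)>m$ must be handled separately.

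For part (3), uniqueness of $B$ and $S$: under $m\leq 0$, or $m\geq 1$ with $\la_m>\la_{m+1}$, or $m\geq 1$ with $\la_m=\la_{m+1}$ but $t\leq s^*(\la\coma m)$, one has $\la_{r'}>\la_{r'+1}=\dots$ strictly, or more precisely the eigenvalue $c$ is strictly below $\la_{r'}$ and the bottom block of $S_0$ is an honest eigenspace; then the decomposition $\C^d=\bigoplus$(top eigenspaces of $S_0$)$\oplus$(bottom eigenspace) is canonical, so $B=\sum_{i=1}^{d-r'}(c-\la_{r'+i})\,v_{r'+i}\otimes v_{r'+i}$ does not depend on the choice of ONB within each eigenspace — it equals $(c-S_0)$ compressed to the bottom block, which is basis-independent — and hence $S$ is unique. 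I would also remark that when the excluded borderline case $\la_m=\la_{m+1}$ with $t>s^*$ occurs, $r'=m$ falls strictly inside an eigenvalue block of $S_0$, so the choice of which $m$-dimensional subspace of that block to keep is not forced, and indeed different choices of the ONB $\{v_i\}$ give genuinely different $B$; this shows the hypotheses in (3) are sharp, though proving that is not needed for the statement. I expect parts (1) and (3) to be short given the Appendix results, and the interlacing rigidity in (2) to require the most care.
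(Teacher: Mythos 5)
Your overall strategy — verify membership by direct construction, cite the Appendix for spectral minimality, and prove the geometric characterization by rigidity — matches the paper's outline, and parts (1) and (3) plus the direction (b)$\Rightarrow$(a) of part (2) are essentially fine. Two small points before the main issue: the vector you write as $\nu=(\la_1,\dots,\la_{r'},c,\dots,c)$ has the correct entries as a multiset but is not in $\R^d\,^\downarrow$ when $m\ge1$ and $t>s^*$ (there one has $c>\la_m$, so the sorted form is the interleaved one of Proposition \ref{pdefi del nulam}); and for part (1) the clean reference is Theorem \ref{tutti nu} together with Corollary \ref{cor nuevo}, since the statement of Proposition \ref{pdefi del nulam} itself refers back to Theorem \ref{el St general}.

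The genuine gap is in (a)$\Rightarrow$(b). You argue: since $S\geq S_0$, Weyl gives $\la(S)\geqp\la(S_0)$, and ``equality $\la_i(S)=\la_i(S_0)$ holds for $i=1,\dots,r'$'', whence interlacing rigidity places $B$ on the bottom block. But this premise is false exactly when $m\geq1$ and $t>s^*(\la,m)$: there $r'=m$ while $r:=r_{\la,m}(t)<m$, and for $r<i\leq m$ the $i$-th largest entry of $\nu$ is $c>\la_m\geq\la_i(S_0)$, so the equality of top-$r'$ eigenvalues breaks down and the Ky-Fan/Weyl rigidity you invoke does not apply. You acknowledge that ``the case $r'=m$ versus $r'=r_{\la,m}(t)>m$ must be handled separately'', but the only rigidity argument you actually write is the one that fails in the first case. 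The paper handles it by a different mechanism (Lemma \ref{lem sobre rango tris} and Proposition \ref{lem sobre rango quatris}): one exploits the rank constraint $\rk B\leq d-m$ directly, choosing a $(d-m)$-dimensional $\eme\supseteq R(B)$ and computing $\tr(P_\eme S_0 P_\eme)=\tr(P_\eme(S_0+B)P_\eme)-\tr B$; combining the Ky-Fan inequality for $S_0+B$ with the known value of $\tr B$ forces equality in Ky-Fan \emph{for $-S_0$}, which identifies $\eme$ as the span of eigenvectors of $S_0$ for its smallest $d-m$ eigenvalues, and from there $B$ is read off. That rank-constraint argument is the missing ingredient; without it (or a substitute), part (2) and hence the uniqueness discussion in part (3) are not established for $m\geq1$, $t>s^*$.
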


\begin{rem}\label{clm y cl} 
Suppose that $m\geq 1$. In this case the map $c_{\la\coma m}(\cdot)$ is continuous and 
strictly increasing.
Indeed, 
by Lemma \ref{r y c} we know that 
$s^* = \sum_{i=1}^{m} \la_i + (d-m) \, \la_{m} \,$. 
Hence 
\beq\label{also}
c_{\la\coma m} (t)= \la_m+\frac{t-s^*}{d-m} = 
\frac{t-\sum_{j=1}^m\lambda_j}{d-m}  \peso{for every} t>s^* \ . 
\eeq
The fact that  the map $c_\la$ is continuous and 
strictly increasing will be also proved in Lemma \ref{r y c}. 
Let us abbreviate by $r = r_{\la\coma m}(t)\,$ for any fixed $t>s^*$.
Then, if $r>0$ we have that 
\beq\label{c y r}
r <m \py  \la_{r} \ge c_{\la\coma m}(t)=\la_m+\frac{t-s^*}{d-m} \ge \la_{r+1}
\ .
\eeq
\pausa
Finally, notice that the previous remarks  allow to define
\begin{equation}\label{defi s**}
s^{**} = c_{\la\coma m}\inv(\la_1)
\stackrel{\eqref{also}}{=} (d-m)\, \la_1 + \suml_{j=1}^m \la_j \ge s^*  
\quad \mbox{(with equality $\iff \la_1 = \la_m$) . }
\end{equation}
Then $c_{\la\coma m}(t) \ge \la_1$ and $r= r_{\la\coma m}(t)= 0$ for every $t>s^{**}$ (by Definition \ref{el d y r prima}). 
These remarks are necessary  to characterize the vector $\nu$ of Theorem \ref{el St general}: 
\EOE
\end{rem}
 
\begin{pro}\label{pdefi del nulam} 
Let $S_0\in \matpos$ with  $\la = \la(S_0)$, 
$ t_0= \tr(S_0)$ and $m\in \mathbb{Z}$ such that $m<d$.  
Fix $t \in [t_0 \coma+\infty)$ and denote by 
$r = r_{\la\coma m}(t)\,$. Then, the minimal vector
$\nu=\nu(\la\coma m\coma t)\in \R_{+}^d\,^\downarrow \,$ of Theorem \ref{el St general} 
has $\tr \, \nu = t$ and it  is
given by the following rule: 
\bit
\item 
If $m\le 0$ then 
$\nu=\big( \la_1\coma \dots \coma \la_{r} \coma c_{\la\coma m}(t) \, \uno_{d-r}\big)
=\big( \la_1\coma \dots \coma \la_{r} \coma c_{\la}(t) \, \uno_{d-r}\big)$.   
\eit
If $m\ge 1$ we have that
\bit
\item 
$\nu = \big( \la_1\coma \dots \coma \la_{r} \coma 
c_{\la\coma m}(t) \, \uno_{d-r}\big)$ for $t\le s^*$ 
(so that  $r\ge m$ and $c_{\la\coma m}(t)\le \la_m$). 
\item 
$\nu =  \Big( \la_1\coma \dots \coma \la_{r} 
\coma c_{\la\coma m}(t) \, \uno_{d-m}\coma \la_{r+1}\coma 
\dots \coma \la_m\Big) $
for $t\in (s^*\coma s^{**})$, and  
\item 
$\nu  = \big( c_{\la\coma m}(t) \, \uno_{d-m}\coma 
\la_{1}\coma \dots \coma \la_m\big)$ 
for $t\ge s^{**}\,$.
\eit
If $\la_1=\la_m\,$, the second case above disappears.\QED

\end{pro}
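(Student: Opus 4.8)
\proof
This is a corollary of Theorem \ref{el St general}: once the structure of a $\prec_w$-minimal matrix $S=S_0+B$ is known, the minimal vector is just $\nu=\la(S)$, and the three displayed formulas merely record how the eigenvalues of such an $S$ reorder themselves as $t$ moves through the ranges $t\le s^*$, $s^*<t<s^{**}$, $t\ge s^{**}$ (the case $m\le 0$ being a degenerate instance of the first). So the plan is: first extract the eigenvalue multiset of $S$; second check $\tr\nu=t$; third sort the multiset in each regime. Throughout I would write $r'=\max\{r_{\la\coma m}(t)\coma m\}$ and $c=c_{\la\coma m}(t)$, as in Theorem \ref{el St general}.

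For the first step I would pick any $S\in U_t(S_0\coma m)$ with $\la(S)=\nu$ (such $S$ exists since $\nu\in\Lambda(U_t(S_0\coma m))$ by Theorem \ref{el St general}) and apply part (2) of that theorem, which gives an ONB $\{v_i\}_{i\in\IN d}$ of eigenvectors for $S_0\coma\la$ with $B=S-S_0=\suml_{i=1}^{d-r'}(c-\la_{r'+i})\,v_{r'+i}\otimes v_{r'+i}$ as in \eqref{Formula de B}. Since each $v_i$ is simultaneously an eigenvector of $S_0$ and of $B$, it is an eigenvector of $S=S_0+B$ with eigenvalue $\la_i$ for $i\le r'$ and $c$ for $i>r'$; hence, as a multiset, $\nu$ consists of $\la_1\coma\dots\coma\la_{r'}$ together with $d-r'$ copies of $c$. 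Then $\tr\nu=\suml_{i=1}^{r'}\la_i+(d-r')\,c$, so it suffices to check $(d-r')\,c=t-\suml_{i=1}^{r'}\la_i$; this splits according to whether $r'=r_\la(t)$, in which case it is exactly \eqref{defi cs} (using Eq. \eqref{circular} of Lemma \ref{r y c} to identify $r_{\la\coma m}(t)$ with $r_\la(t)$ for $t\le s^*$), or $r'=m$, in which case it is Eq. \eqref{also} of Remark \ref{clm y cl} (and at the endpoint $t=s^*$ both expressions agree by \eqref{el s* def}). In all cases $\tr\nu=t$.

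The third step is pure bookkeeping and relies on two facts from Lemma \ref{r y c}: $c_{\la\coma m}$ is continuous and strictly increasing with $c_{\la\coma m}(s^*)=\la_m$ and $c_{\la\coma m}(s^{**})=\la_1$, so $c\le\la_m$ exactly for $t\le s^*$ and $c\ge\la_1$ exactly for $t\ge s^{**}$; and, writing $r=r_{\la\coma m}(t)$, the minimality in the definition of the irregularity index forces $\la_{r+1}\le c<\la_r$ whenever $r\ge 1$. With these in hand: if $m\le 0$, or $m\ge 1$ with $t\le s^*$, then $r'=r\ge m$ and $\la_r\ge c\ge\la_{r+1}$, so the multiset is already ordered and $\nu=(\la_1\coma\dots\coma\la_r\coma c_{\la\coma m}(t)\,\uno_{d-r})$. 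If $m\ge 1$ and $s^*<t<s^{**}$, then $r'=m$ (because $c>\la_m$ forces $r_{\la\coma m}(t)<m$) and $c<\la_1$ forces $r=r_{\la\coma m}(t)\ge 1$, so $\la_1\ge\dots\ge\la_r>c\ge\la_{r+1}\ge\dots\ge\la_m$ and the $d-m$ copies of $c$ slot between $\la_r$ and $\la_{r+1}$, giving $\nu=(\la_1\coma\dots\coma\la_r\coma c_{\la\coma m}(t)\,\uno_{d-m}\coma\la_{r+1}\coma\dots\coma\la_m)$. If $m\ge 1$ and $t\ge s^{**}$, then $r'=m$ and $c\ge\la_1$, so the copies of $c$ go on top and $\nu=(c_{\la\coma m}(t)\,\uno_{d-m}\coma\la_1\coma\dots\coma\la_m)$. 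Finally, if $\la_1=\la_m$ then $s^*=s^{**}$ by \eqref{defi s**}, so the interval $(s^*\coma s^{**})$ is empty and the middle case does not occur.

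I do not expect a genuine obstacle here, since everything conceptual is already in Theorem \ref{el St general} and Lemma \ref{r y c}, which are assumed. The only delicate point is the behaviour at the endpoints $t=s^*$, $t=s^{**}$ and when the eigenvalues of $S_0$ are degenerate: there several of the closed forms for $r'$ and $c$ collapse onto one another, and one must check that the ``extra'' coordinates $\la_{r+1}\coma\dots\coma\la_m$ are then all equal to $c$ and get absorbed into the flat block, so that the displayed formulas remain literally correct. This is exactly the sort of technicality that Lemma \ref{r y c} is designed to dispose of once and for all.
\QED
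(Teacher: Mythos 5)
Your proof is correct, but it runs in the opposite direction from the paper's. The paper proves Proposition \ref{pdefi del nulam} with a one-line citation: the three displayed formulas are \emph{verbatim} the content of Definition \ref{defi del nulam}, which introduces $\nu_{\la\coma m}(t)$; Lemma \ref{las propos del nulam} already records $\tr\nu_{\la\coma m}(t)=t$ and $\nu_{\la\coma m}(t)\in\Lambda_t(\la\coma m)$; Theorem \ref{tutti nu} already proves this vector is the unique $\prec_w$-minimizer; and Corollary \ref{cor nuevo} identifies $\Lambda_t(\la\coma m)$ with $\Lambda(U_t(S_0\coma m))$. So the paper \emph{defines} the formulas first, proves minimality directly in $\R_+^d\,^\downarrow$ (Theorem \ref{tutti nu}), and then uses that to establish the geometric structure in Theorem \ref{el St general}. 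You instead take Theorem \ref{el St general} as a black box and \emph{derive} the explicit formulas: from item 2 you read off the eigenvalue multiset $\{\la_1,\dots,\la_{r'}\}\cup\{c\}^{d-r'}$ of a minimal $S$, verify the trace through \eqref{defi cs}/\eqref{also}, and then sort the multiset in each regime using Lemma \ref{r y c} and Remark \ref{clm y cl}. This is a legitimate and self-contained alternative --- no circularity, since Theorem \ref{el St general} is assumed, not re-proved --- and it has the virtue of making clear \emph{why} the formulas hold (the block structure of $B$ on a common eigenbasis). What it costs is the recomputation of material (trace check, ordering inequalities, endpoint behaviour) that the paper had already packaged into Lemma \ref{las propos del nulam} precisely so this proposition could be dispatched by reference. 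Your sorting step is careful and your handling of the boundary cases ($r=0$, $t=s^*$, $t=s^{**}$, $\la_1=\la_m$) is sound; the only cosmetic gap is that the inequality $\la_r\ge c$ needs the side condition $r\ge 1$, which you note implicitly but do not flag in the first regime.
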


\section{Solutions of the main problems}\label{la solucion frames}

In this section we present the solutions of the problems in frame theory described in Section \ref{description of main prob}. Our strategy is to apply Theorem \ref{el St general} and Proposition \ref{pdefi del nulam} to the 
matrix-theoretic reformulations of these problems obtained in Sections \ref{Fick} and \ref{duales con rest}. 
We point out that our arguments are not only constructive but also algorithmically implementable. 
This last fact together with recent progress in algorithmic constructions of solutions to the classical frame design problem allow us to effectively compute the optimal frames from Theorems \ref{teo F1} and \ref{El S bis} below (for optimal completions see Remarks \ref{rem algo 1}, \ref{rem algo 2} and Section \ref{exas}; for optimal duals see Remark \ref{rem algo 3} and Example \ref{exa dual}).

\subsection{Optimal completions with prescribed norms}\label{opti completions}

Next we show how our previous results and techniques allow us to partially 
solve the frame completion problem described in Section \ref{Fick} (which includes the problem posed in \cite{FicMixPot}).
We begin by extracting the relevant data for the problem: 
\begin{num}\label{data}
In what follows, we fix the following data: 
A space $\cH \cong \C^d$. 
\ben
\item[D1.] A sequence of vectors $\cF_0=\{f_i\}_{i\in \IN{\n0  }}\in \cH^{\n0  }$.  
\item[D2.] An integer $n>\n0  \,$. We denote by $k = n-\n0  \,$.
We assume that $\rk S_{\cF_0} \ge d-k$. 
\item[D3.] A sequence $\ca= \{\alpha_i\}_{i\in \In} \in \R_{>0}^n\,$
such that $\|f_i\|^2=\alpha_i$ for every $i \in \IN{\n0  }\,$. 
\item[D4.] We shall denote by $t = \tr \, \ca$ 
and by $\cb = \{\alpha_i\}_{i=\n0  +1}^n \in \R_{>0}^k\,$. 
\item[D5.] 
The vector $\lambda=\lambda(S_{\cF_0})\in \R_{+}^d\,^\downarrow$.
\item[D6.] The integer  $m=d-k = (d+\n0  )-n$. Observe that  $d-m=k = n-\n0  \,$.
\EOE
\een
\end{num}

\pausa
In order to apply the results of Section \ref{unified} to this problem, we need to recall and restate some
objects and notations: 

\begin{fed}\label{el nu tilde}\rm
Fix the data $\cF_0=\{f_i\}_{i\in \IN{\n0  }}$ and $\ca= \{\alpha_i\}_{i\in \In} $
as in \ref{data}. Recall that $t = \tr \ca$, $\la= \la(S_{\cF_0})$  and $m=d-k$. We rename 
some notions of previous sections: 
\ben 
\item   The vector $\hn= \nu
 \in\R_{\geq 0}^d\,$ of Theorem \ref{el St general} (see also Proposition \ref{pdefi del nulam}).  
\item The number $c = \hc \igdef c_{\la\coma m}(t)$ 
(see Definition \ref{el d y r prima}). 
\item The integer  $r = \hr \igdef \max \{r_{\la\coma m}(t)\coma m\}$ 
(see Definition \ref{el d y r prima}). 
Note that  $d-r\le k$. 
\item Now we consider the vector $$\mu = \hm \igdef 
\Big(\,\hc -\la _{r+j}\Big)_{j \in \IN{d-r}} \in (\R_{\geq 0}^{d-r})^\uparrow\, .$$ Observe that 
$\tr \, \mu = \tr \, \nu - \tr\, \la = 
t - \suml_{i\in \IN{\n0  }} \|f_i\|^2
= \tr\, \ca  - \suml_{i\in \IN{\n0  }} \al_i = \tr\, \cb$. 
\EOE
\een
\end{fed}
                           
\pausa                           
Throughout the rest of this section 
we shall 
denote by $S_0 = S_{\cF_0}$ the frame operator of $\cF_0\,$. 
Recall the following notations of Section \ref{Fick}: 
$$
\cC_\ca(\cF_0)=\big\{\, \{f_i\}_{i\in \IN{n}} \in \RS(n \coma d):
\{f_i\}_{i\in \IN{\n0  }}= \cF_0 \py \|f_i\|^2=\alpha_i \ \mbox{ for }  \ i \ge \n0  +1\,\big\}\ ,
$$ 
$$
\cS\cC_\ca(\cF_0)=\{S_\cF:\ \cF\in \cC_\ca(\cF_0)\} 
\peso{and} \Lambda_\ca(\cF_0) \igdef\{\lambda(S): \ S\in \cS\cC_\ca(\cF_0)\}
\ . 
$$

\begin{teo} \label{teo F1}
Fix the data of \ref{data} and \ref{el nu tilde}. 
If we assume that $\cb\prec \hm$ then 
 \begin{enumerate}
 \item The vector $\nu = \hn\in\Lambda_\ca (\cF_0 )$. 
\item We have that $\nu \prec \beta $ for every other $\beta \in \Lambda_\ca (\cF_0 )$.
\item 
Let $r= \hr$.  Given
$\cF_1 = \{f_i\}_{i=\n0  +1}^n\in \cH^{k}$ 
such that $\cF = (\cF_0 \coma \cF_1) \in \cC_\ca(\cF_0) $,  the following conditions are equivalent:
\rm
\ben
\item \it  $\lambda(S_\cF) = \nu$ (i.e. $S_\cF$ is $\prec$-minimal in $\cS\cC_\ca(\cF_0)$). 
\rm
\item 
\it 
There exists $\{h_i\}_{i\in \IN{d}}\,$, an ONB  
of eigenvectors for  $S_0 \coma \la(S_0)$
  such that   
\beq\label{El B de F}
S_{\cF_1}=  B = \suml_{i=1}^{d-r} \mu_i \, h_{r+i} \otimes h_{r+i} \ . 
\eeq
\een
Since, by the hypothesis, $\cb\prec \mu = \hm$ then such an $\cF_1$ exists. 
\item Moreover, if 
any of the conditions in item 3 of Theorem \ref{el St general} holds,  
then 
\ben
\item 
Any ONB of eigenvectors for $S_0\coma \la $ produces the same 
operator $B$ via \eqref{El B de F}. 
\item Any $\cF = (\cF_0 \coma \cF_1) \in \cC_\ca(\cF_0) $
satisfies that $\la (S_\cF) = \hn  \iff S_{\cF_1} = B$. 
\een
\end{enumerate}
\end{teo}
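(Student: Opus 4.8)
The plan is to transfer Theorem \ref{el St general} and Proposition \ref{pdefi del nulam} from the matrix model $U_t(S_0\coma m)$ to the frame-completion setting via the description of $\cS\cC_\ca(\cF_0)$ from Proposition \ref{con el la y mayo}. The essential point is that $\cS\cC_\ca(\cF_0)$ is not all of $U_t(S_0\coma m)$ but only the subset cut out by the stronger majorization condition $(\alpha_i)_{i=\n0+1}^n \prec \lambda(S-S_0)$ (rather than merely $\tr(S-S_0) = \tr\,\cb$); the hypothesis $\cb\prec \hm$ is exactly what guarantees that the $\prec_w$-minimizer produced in the matrix model actually survives inside this smaller set.

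First I would establish item (1). By Theorem \ref{el St general} there is a matrix $S\in U_t(S_0\coma m)$ with $\la(S)=\nu=\hn$, and by \eqref{Formula de B} together with Definition \ref{el d y r prima} (and $d-r\le k$, noted in \ref{el nu tilde}) we can write $B=S-S_0 = \sum_{i=1}^{d-r}\mu_i\, v_{r+i}\otimes v_{r+i}$ for an ONB of eigenvectors of $S_0\coma\la$, so $\la(B) = \mu^\downarrow = \hm$ rearranged decreasingly, padded with zeros. Since $\cb\prec\hm$ by hypothesis (and both have trace $\tr\,\cb$, as computed in \ref{el nu tilde}), transitivity of majorization gives $\cb\prec\la(B)$; hence by Proposition \ref{con el la y mayo} (the characterization $S\geq S_0$ and $(\alpha_i)_{i=\n0+1}^n\prec\la(S-S_0)$) we get $S\in\cS\cC_\ca(\cF_0)$, so $\nu=\la(S)\in\Lambda_\ca(\cF_0)$. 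For item (2), note $\Lambda_\ca(\cF_0)\inc\Lambda(U_t(S_0\coma m))$ by Remark \ref{estan en U}(1); since $\nu$ is $\prec_w$-minimal on the larger set (Theorem \ref{el St general}(1)) and lies in the smaller set, it is $\prec_w$-minimal there too — and since every $\beta\in\Lambda_\ca(\cF_0)$ has $\tr\,\beta = t = \tr\,\nu$ (all are spectra of frame operators of completions with the same prescribed norms), submajorization upgrades to majorization: $\nu\prec\beta$.

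For item (3), the equivalence (a)$\iff$(b): given $\cF=(\cF_0\coma\cF_1)\in\cC_\ca(\cF_0)$ with $S_\cF=S_0+S_{\cF_1}\in\cS\cC_\ca(\cF_0)\inc U_t(S_0\coma m)$, apply Theorem \ref{el St general}(2) — $\la(S_\cF)=\nu$ is equivalent to $S_{\cF_1}=S_\cF-S_0$ having the form \eqref{Formula de B}, which (using $r'=\hr=\max\{r_{\la\coma m}(t)\coma m\}$ and $c=\hc$, so that $c-\la_{r+i}=\mu_i$) is exactly \eqref{El B de F}. Existence of such an $\cF_1$: the operator $B$ of \eqref{El B de F} is positive with $\rk B\le d-r\le k$ and $\la(B)$ the decreasing rearrangement of $\mu=\hm$; since $\cb\prec\hm$, Proposition \ref{frame mayo} (completing with zeros) yields a sequence $\cF_1=\{f_i\}_{i=\n0+1}^n\in\cH^k$ with $\|f_i\|^2=\alpha_i$ and $S_{\cF_1}=B$, and then $\cF=(\cF_0\coma\cF_1)\in\cC_\ca(\cF_0)$ with $\la(S_\cF)=\la(S_0+B)=\nu$. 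Finally item (4) is an immediate transcription: under any of the three conditions in Theorem \ref{el St general}(3) the matrix $B=S-S_0$ is unique and \eqref{Formula de B} holds for every ONB of eigenvectors of $S_0\coma\la$, which is precisely (4a) and (4b) restated for $S_{\cF_1}$.

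The only genuine obstacle is the passage in item (1) from "$\nu\in\Lambda(U_t(S_0\coma m))$" to "$\nu\in\Lambda_\ca(\cF_0)$" — i.e.\ checking that the minimizer of the matrix model is realized by an honest completion with the prescribed norms $\cb$, not merely by a positive perturbation of the right rank and trace. This is exactly where the standing hypothesis $\cb\prec\hm$ enters, and the argument above shows it is both necessary in spirit and sufficient, via Proposition \ref{frame mayo}. Everything else is bookkeeping: matching the parameters $r'$, $c$, $\mu$ of Definition \ref{el nu tilde} against those of Theorem \ref{el St general}, and observing that on $\cS\cC_\ca(\cF_0)$ the trace is pinned to $t$ so that $\prec_w$-minimality is the same as $\prec$-minimality.
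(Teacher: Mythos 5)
Your argument follows the paper's route essentially step for step: produce the matrix minimizer via Theorem \ref{el St general} and Proposition \ref{pdefi del nulam}, realize it as a frame completion via Proposition \ref{frame mayo}/\ref{con el la y mayo} together with the standing hypothesis $\cb\prec\hm$, then transfer $\prec_w$-minimality from $\Lambda(U_t(S_0\coma m))$ to $\Lambda_\ca(\cF_0)$ and upgrade to $\prec$ because all traces there equal $t$. Items (2)--(4) are handled just as in the paper.

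There is, however, one step you skip which the paper treats explicitly and which is genuinely needed: you must check that $\hn>0$. The set $\cC_\ca(\cF_0)$ consists by definition of \emph{frames} $\cF\in\RS(n\coma d)$, so the frame operator $S_\cF$ must lie in $\gld$, not merely in $\matpos$. Proposition \ref{con el la y mayo} characterizes $\cS\cC_\ca(\cF_0)$ as a subset of $\gld$; when you invoke it to conclude $S=S_0+B\in\cS\cC_\ca(\cF_0)$, you verify $S\geq S_0$ and $(\alpha_i)_{i=\n0+1}^n\prec\la(S-S_0)$ but not that $S$ is invertible. Theorem \ref{el St general} only delivers $S\in U_t(S_0\coma m)\subset\matpos$, and the strict positivity of the smallest entry $\nu_d$ does not come for free: a priori $\ker S_0\cap\ker B$ could be nontrivial. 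The paper handles this at the outset by a case analysis on the form of $\nu$ given in Proposition \ref{pdefi del nulam}: either $\nu_d=\la_m$, which is positive by the data assumption $\rk S_{\cF_0}\geq d-k=m$ from \ref{data}, or $t\le s^*$ and $\nu_d=\hc=c_\la(t)$, which is positive because $c_\la$ is strictly increasing with $c_\la(\tr S_0)=\la_d\geq 0$ and $\cb>0$ forces $t>\tr S_0$. Without this check, your conclusion that $\nu\in\Lambda_\ca(\cF_0)$ in item (1), and likewise the existence claim in item (3), is incomplete, since the constructed $\cF$ could in principle fail to span $\cH$. Inserting that short positivity argument closes the gap; the rest of the write-up is sound and matches the paper's proof.
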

\proof
Since the elements of $\cC_\ca(\cF_0) $ must be frames, we have first 
to show that $\hn>0$. 
By the description of 
$\nu = \hn
\in \R_{+}^d\,^\downarrow \,$ given in Proposition \ref{pdefi del nulam}, 
there are two possibilities: 
In one case  $\nu_d = \la_m$ which is positive because we know from the data 
given in \ref{data} that $\rk \, S_0 \ge m$. 
Otherwise $t\le s^*$ so that 
$\nu_d = \hc = c_{\la}(t)$ by Proposition \ref{pdefi del nulam} and Definition 
 \ref{el d y r prima}. But  $c_{\la}(t)>0$
because  $\cb>0 \implies t > \tr S_0\,$ 
(see Lemma \ref{clm y cl} and Definition \ref {defi r c}). 

\pausa
By Proposition \ref {frame mayo}, we know that 
the hypothesis $\cb\prec \mu = \hm$ assures that there exists a 
sequence $\cF_1 = \{f_i\}_{i=\n0  +1}^n\in \cH^{k}$  
such that $\cF = (\cF_0 \coma \cF_1) \in \cC_\ca(\cF_0) $ and 
$S_{\cF_1} = B$. Then 
$$
\la(S_\cF) = \la(S_{\cF_0} + S_{\cF_1}) = \la(S_{\cF_0} + B) = \hn \in \Lambda_\ca (\cF_0 ) \ ,
$$ 
by Theorem \ref{el St general}. 
Observe that  $\Lambda_\ca (\cF_0 ) \inc 
\Lambda (U_t(S_{\cF_0}\coma m )\,)$, 
by 
Remark \ref{estan en U}. 
Hence the majorization of item 2, the equivalence of item 3 
 and the uniqueness results of item 4 follow from 
Theorem \ref{el St general}. Note that all the vectors of $\Lambda_\ca (\cF_0 )$ 
have the same trace. So we have $\prec$ instead of $\prec_w\,$. 
\QED

\begin{teo}\label{teo F2}
Fix the data of \ref{data} and \ref{el nu tilde}.
If we assume that $\cb\prec \hm$ then 
\ben
\item 
Any  $\cV\in \cC_\ca(\cV_0)$ such that $\lambda(S_{\cV})=\hn$ satisfies that 
\begin{equation*}\label{eq hay min2}
\sum_{i\in \IN{d}}f(\hn_i)=P_f(\cV)\leq P_f(\cW) \  \ 
\text{ for every } \cW\in \cC_\ca(\cV_0)\,, 
\end{equation*} 
and every (not necessarily increasing) convex function $f: [0,\infty)\rightarrow [0,\infty)$.
\item 
If  $f$ is strictly convex then, for every global minimizer 
$\cV'$ of $P_f(\cdot)$ on $\cC_\ca(\cV_0)$
we get that $\lambda(S_{\cV'})=\hn$. 
\een
In particular the previous items holds for the Benedetto-Fickus' potential and the mean square error.
\end{teo}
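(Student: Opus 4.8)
The plan is to deduce Theorem \ref{teo F2} directly from Theorem \ref{teo F1} together with the standard relationship between majorization and traces of convex functions recalled in Section \ref{subsec 2.2}. First I would recall that $P_f(\cF)=\tr(f(S_\cF))=\sum_{i=1}^d f(\lambda_i(S_\cF))$, so that $P_f$ depends on $\cF$ only through the vector $\lambda(S_\cF)\in\Lambda_\ca(\cF_0)$. Next, since every $S\in\cS\cC_\ca(\cF_0)$ satisfies $\tr S=\sum_{i=1}^n\alpha_i=t$, every vector of $\Lambda_\ca(\cF_0)$ has trace $t$; and by Proposition \ref{pdefi del nulam} also $\tr\,\nu=t$, where I write $\nu=\hn$. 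Hence, under the hypothesis $\cb\prec\hm$, Theorem \ref{teo F1} yields both that $\nu\in\Lambda_\ca(\cF_0)$ (so that there does exist $\cV\in\cC_\ca(\cF_0)$ with $\lambda(S_\cV)=\nu$) and that $\nu\prec\beta$ for every $\beta\in\Lambda_\ca(\cF_0)$ --- genuine majorization, not merely submajorization, precisely because we are working inside a constant-trace slice.

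For item 1, I would fix $\cW\in\cC_\ca(\cF_0)$ and set $\beta=\lambda(S_\cW)\in\Lambda_\ca(\cF_0)$; then $\nu\prec\beta$, and since $f$ is convex on $[0,\infty)$ (which contains all entries of $\nu$ and $\beta$), the first property of the list in Section \ref{subsec 2.2} gives $\sum_{i=1}^d f(\nu_i)\le\sum_{i=1}^d f(\beta_i)$, that is $P_f(\cV)=\sum_i f(\nu_i)\le P_f(\cW)$. Note that no monotonicity of $f$ is needed here, exactly because we have $\prec$ and not only $\prec_w$.

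For item 2, let $f$ be strictly convex and let $\cV'$ be a global minimizer of $P_f$ on $\cC_\ca(\cF_0)$. On the one hand $P_f(\cV')\le P_f(\cV)=\sum_i f(\nu_i)$ by minimality and item 1; on the other hand $\nu\prec\lambda(S_{\cV'})$ forces $\sum_i f(\nu_i)\le\sum_i f(\lambda_i(S_{\cV'}))=P_f(\cV')$. Thus $\sum_i f(\nu_i)=\sum_i f(\lambda_i(S_{\cV'}))$, and the strict-convexity property of the list in Section \ref{subsec 2.2} provides a permutation $\sigma$ of $\IN{d}$ with $\lambda_i(S_{\cV'})=\nu_{\sigma(i)}$ for all $i$; since both $\lambda(S_{\cV'})$ and $\nu$ are arranged in decreasing order, this means $\lambda(S_{\cV'})=\nu$, as claimed.

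Finally I would note that the Benedetto-Fickus' potential is $P_f$ for the strictly convex $f(x)=x^2$, so both items apply verbatim; and for the mean square error $f(x)=x^{-1}$ the only point to check is that every $\cF\in\cC_\ca(\cF_0)$ is an honest frame, whence $S_\cF\in\glh^+$ and all eigenvalues involved lie in $(0,\infty)$, where $x^{-1}$ is strictly convex, so the argument runs identically with $[0,\infty)$ replaced by $(0,\infty)$. I do not expect a genuine obstacle: all the substance is already contained in Theorem \ref{el St general}/Theorem \ref{teo F1}, and the only points requiring (minor) care are the reduction to the constant-trace slice --- which upgrades the submajorization of Theorem \ref{el St general} to the majorization used here --- and the domain of $f$ in the mean-square-error case.
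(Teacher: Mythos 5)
Your proposal is correct and follows essentially the same route as the paper, which simply cites Theorem \ref{el St general} together with the majorization facts of Section \ref{subsec 2.2}; you spell out the same argument in full, routing through Theorem \ref{teo F1} (itself a corollary of Theorem \ref{el St general}), and correctly highlight the one point the terse official proof leaves implicit, namely that the constant-trace constraint on $\Lambda_\ca(\cF_0)$ upgrades $\prec_w$ to $\prec$, which is exactly what permits dropping the monotonicity hypothesis on $f$. Your remark about the domain of $x\mapsto x^{-1}$ for the mean square error is a small but legitimate clarification that the paper glosses over.
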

\proof It follows from Theorem \ref{el St general} and the majorization 
facts described in Section \ref{subsec 2.2}.
\QED

\pausa
Fix the data $\cF_0$ and $\cb$ of \ref{data} and \ref{el nu tilde}.
We shall say that ``the completion problem is {\bf feasible}" 
if the condition $\cb\prec \hm$ of Theorem \ref{teo F1} is satisfied.

\begin{rem}\label{rem algo 1} \rm
The data $\hn$, $\hr$, $\hc$ and  $\hm$ are essential for 
Theorem \ref{teo F1}, both for checking the feasibility 
hypothesis $\cb\prec \hm$ 
and for the construction of the matrix $B$ of \eqref{El B de F}, 
which is the frame operator of the optimal extensions of $\cF_0\,$.
Notice that the vector $\hm$ measures
how restrictive is the feasibility condition. 
Fortunately, this condition can be easily computed according to the following algorithm:

\ben
\item The numbers $t = \tr \, \ca$ and $m = d-k$ are included in the data \ref{data}.
\item The main point is to compute the irregularity 
$r = \hr = \max \{r_{\la\coma m}(t)\coma m\}$.  If $m\leq 0$ then 
\eqref{el rt} allows us to compute $r_\la(t)$.  
If $m\ge1$, the number $s^* = s^*(\la\coma m)$ of Eq. \eqref{el s* def}  
allows us  to compute 
$r\,$: If $t> s^*$ then $r=m$ by Eq. \eqref{c y r}, and if $t\le s^*$ 
then $r = r_\la(t)$ by the remark which follows Definition \ref{el d y r prima}.  
 
\item Once $r$ is obtained, we can see that 
the wideness of the allowed weights $\cb$ depends 
on the dispersion of the eigenvalues 
$(\la_{r+1} \coma \dots \coma \la_d)$ of $S_{\cF_0}\,$. 

\item Indeed, the number  
 $t_1\igdef  \tr \, \cb = t -\tr \, S_{\cF_0}$ is known data. 
Also $\tr \hm = t_1\,$. Hence 
$\hc$ and  $\hm$ can be directly computed: 
Let $s = \sum_{i=r+1}^d \, \la_i\,$. Then 
$$
t_1 = \tr \mu = (d-r) \, \hc - s \implies 
\hc = \frac{t_1 + s}{d-r} = 
\frac{\tr \, \cb + \sum_{i=r+1}^d \, \la_i}{d-r} \ \ .
$$
And we have the vector $\mu = \hm = 
\Big(\,\hc -\la _{r+j}\Big)_{j \in \IN{d-r}} \in (\R_{\geq 0}^{d-r})^\uparrow \,$. 
Then 
$$
\cb\prec \mu \iff \sum_{i =1}^p \cb_i^\downarrow + 
\la _{d-i+1} \le 
\frac {p}{d-r} \  \big(\,\tr \, \cb + \sum_{i=r+1}^d \, \la_i\,\big)
\peso {for}
1\le p < d-r \ ,
$$
since the last inequalities 
$s+ \sum_{i =1}^p \cb_i^\downarrow \le s+\tr \, \cb$ (for $d-r \le p \le k$) clearly hold.  
\een
It is interesting to note that the closer $\cF_0$ is to be tight 
(at least in the last $r$ entries of $\la$), 
the more restrictive Theorem \ref{teo F1} becomes; but in this case 
$\cF_0$ and $\cF_0^\#$ are already ``good".

\pausa
On the other hand, if $\cF_0\,$ is far from being tight then the sequence  
$(\la_{r+1} \coma \dots \coma \la_d)$ has more dispersion
and the feasibility condition $\cb\prec \hm$ becomes less restrictive. 
It is worth mentioning that in the uniform 
case $\cb = b\, \uno_k\,$ is always feasible and 
Theorem \ref{teo F1} can be applied. 

\pausa
Observe that as the number $k$ of vectors increases
(or as the weights $\alpha_i$ increase) the trace $t$ 
grows and the numbers $r$ and $m$ become smaller, taking into account more entries 
$\la_i$ of  $\la(\cF_0)\,$. This fact offers a criterion 
for choosing a convenient
data  $k$  and $\cb$ for the completing process. We remark that 
the vector $\mu$ (and therefore the feasibility) only depends
on $\la$, $k$  and {\bf the trace} of $\cb$, so the feasibility 
can also be obtained by changing $\cb$ maintaining its length (size) and its trace.   

\pausa
The above algorithm (which tests the feasibility  of our method 
for fixed data $\cF_0$ and $\cb$) can be easily implemented  
in M{\scriptsize ATLAB} with low complexity (see Section \ref{exas}). 
\EOE
\end{rem}

\begin{rem}[Construction of optimal completions for the mean square error]\label{rem algo 2}  
 Consider the data in \ref{data}. Apply the algorithm described in Remark \ref{rem algo 1} and assume that $\cb\prec \hm$.  Then construct $B$ as in Eq. \eqref{El B de F}. In order to obtain an optimal completion of $\cV_0$ with prescribed norms we have to construct a sequence $\cV_1\in \cH^k$ with frame operator $B$ and norms given by the sequence $\cb$
 (which is minimal for the mean square error  by Theorem \ref{teo F2}).
But once we know $B$ and the weights $\cb$ we can apply the results in \cite{CFMP} in order to concretely construct the sequence $\cF_1\,$. 
In fact,  in \cite{CFMP} they give a M{\scriptsize ATLAB} implementation which 
works fast, with low complexity. 

\pausa
We also implemented a M{\scriptsize ATLAB} program which compute the matrix $B$
as in Eq. \eqref{El B de F}. This process is direct, but it is more complex because it depends on finding a ONB of eigenvectors
for the matrix $S_{\cF_0}\,$. In Section \ref{exas} we shall present 
several examples which use these programs for computing explicit
solutions. 
\EOE
\end{rem}

\pausa
Consider the data in \ref{data} and \ref{el nu tilde}. Let $f:[0,\infty)\rightarrow [0,\infty)$ be an strictly convex function. By Remark \ref{estan en U}, Theorem \ref{el St general} and the remarks in Section \ref{subsec 2.2},  in general we have that  \beq\label{unamasy} \sum_{i\in \IN{d}}f(\hn_i) \leq  P_f(\cF)\peso { for every } \cF\in \cC_\ca(\cV_0)\ .\eeq Notice that although the left-hand side of Eq.\eqref{unamasy} can be effectively computed, the inequality might not be sharp. Indeed, Eq.\eqref{unamasy} is sharp if and only if the completion problem is feasible and, in this case, the lower bound is attained if and only if $\la(S_{\cF})=\hn$.  Nevertheless, Eq.\eqref{unamasy} provides a general lower bound that can be of interest for optimization problems in $\cC_\ca(\cV_0)$.

\subsection{Examples of optimal completions with prescribed norms}\label{exas}

 In this section we show several examples obtained by 
implementing the algorithms described in Remarks \ref{rem algo 1} and  \ref{rem algo 2} in M{\scriptsize ATLAB}, for different choices of $\cF_0=\{f_i\}_{i\in \IN{\n0  }}$ and $\ca= \{\alpha_i\}_{i\in \In} $
(as in \ref{data}).
Indeed, we have implemented the computation of $r \coma c \coma \mu$ and $\nu$  
by a fast algorithm using $\cb = \{\alpha_i\}_{i=\n0  +1}^n \in \R_{>0}^k\,$ and the vector $\la=\la(S_{\cF_0})$ as data. Then, after 
computing the eigenvectors of  $S_{\cF_0}$ with the function 'eig' 
in M{\scriptsize ATLAB} we computed the matrix $B$, and 
we apply the one-sided Bendel-Mickey algorithm 
(see \cite{DHST} for details) to construct the vectors of
$\cF_1$ satisfying the desired properties. The corresponding M-files that compute all the previous objects are freely distributed by the authors. 

\begin{exa}\label{Ej1} 
Consider the frame $\cF_0\in \RS(7\coma 5)$ 
 whose analysis operator is 
\[
T_{\cF_0}^* = {\scriptsize  
\left[
 \begin{array}{rrrrrrr}
    0.9202&   -0.7476&   -0.4674&    0.9164&    0.1621&    0.3172&   -0.5815\\
    0.4556&    0.0164&    0.0636&    1.0372&   -1.6172&    0.3688&    0.2559\\
   -0.0885&   -0.3495&   -0.9103&    0.3672&   -0.6706&   -0.9252&    0.6281\\
    0.1380&   -0.4672&   -0.6228&   -0.1660&    0.9419&    1.0760&    1.1687\\
    0.7082&    0.2412&   -0.1579&   -1.8922&   -0.4026&    0.1040&    1.6648\\
 \end{array}
 \right]}
 \ .\]
The spectrum of it frame operator is  
$\la=\la(S_{\cF_0})=(9, 5, 4, 2, 1) $ and 
 $t_0 = \tr\,S_{\cF_0}= 21$. 
As in \ref{data}, fix the data 
$k=2$ and $\cb = \{\alpha_i\}_{i=8}^{9}=(3\coma 2.5) \in \R_{>0}^2\,$,  
so that $m = d-k = 3$. We compute: 
\ben
\item The number $r_{\la\coma m}(26.5) = 2$ and the vector $\mu=(2.25\coma 3.25)$. 
Notice that, in this case, $\cb =(3\coma 2.5) \prec (2.25\coma 3.25)=\mu$. Therefore the
completion problem is feasible. 
\item The optimal spectrum is 
$\nu_{\la\coma m}(26.5) = (9\coma 5\coma 4.25 \coma 4.25 \coma 4)$. 
\item 
An optimal completion $\cF_1$ of $\cF_0\,$, with squared norms given by $\cb$ is given by:
\[
T^*_{\cF_1} = {\scriptsize 
\left[
 \begin{array}{rr}
   -0.6120&   -1.1534\\
    0.9087&    0.1097\\
   -1.0680&    0.7154\\
    0.3735&    0.7676\\
   -0.1404&   -0.7462\\
 \end{array}
 \right]}
 \ .\]
\item If we take $\cb = (3.5 \coma 2)$ then the number 
$t = t_0 + \tr\, \cb$ (and so also  $r$ and $\mu$) are the same 
as before but the problem is not feasible, 
because in this case $\cb \not \prec \mu$. 
\een
\end{exa}

\begin{exa}
We want to complete frame $\cF_0$ 
of Example \ref{Ej1} with 4 vectors in $\R^4$, whose norms are given by $\cb=\{\alpha_i\}_{i=8}^{11}=(1\coma 1 \coma \frac{1}{2} \coma \frac{1}{4}) \in \R_{>0}^4$. We can compute that  
\ben
\item $m=d-k=1$ and $r=r_{\la\coma m}(23.75)=3$. 
\item The vector $\mu=(0.875\coma 1.875)$, 
so that $\cb\prec \mu$ and the problem is feasible. 
\item The optimal spectrum is 
$\nu=(9\coma 5\coma 4 \coma 2.875 \coma 2.875)$. 
\item 
An example of optimal completion $\cF_1$ of $\cF_0\,$, 
with squared norms given by $\cb$ is given by:
$$
T^*_{\cF_1}= {\scriptsize 
\left[
 \begin{array}{rrrr}
   -0.7086&   -0.3232&   -0.5011&   -0.3543\\
    0.1730&    0.5746&    0.1224&    0.0865\\
    0.2597&   -0.7252&    0.1836&    0.1299\\
    0.4674&    0.1935&    0.3305&    0.2337\\
   -0.4267&   -0.0457&   -0.3017&  -0.2133\\
 \end{array}
 \right]}
 \ . 
$$
\item If we take $\cb=(2\coma \frac{1}{4} \coma \frac{1}{4} \coma \frac{1}{4}) \in \R_{>0}^4$ 
the problem becomes not feasible.  \EOE
\een
\end{exa}

\begin{exa}
Suppose now that $\cH = \C^5$ and that 
our original set of vectors $\cF_0=\{f_i\}_{i\in \IN{6}}\in \cH ^6$ 
is such that the spectrum of $S_{\cF_0}$ is given by
$\la=(7\coma4\coma4\coma 3\coma 1)$.  Thus $t_0=\tr \, S_{\cF_0}=19$. 
Let $\cb=(2\coma 2\coma 1)$. Then, $k=3$, $m=d-k=2$ and $t=24$.

\pausa
With these initial data, we obtain the values $r_{\la\coma m}(24)=1$ and  $c_{\la\coma m}(24)=4.33$.
The spectrum of the completion $B$ is $\mu=(0.33\coma 1.33\coma 3.33)$ (notice that $\cb \prec \mu$) and the optimal spectrum is
$\nu_{\la\coma m}(24) = (7\coma 4.33\coma 4.33 \coma 4.33 \coma 4)$.
However the frame operator $B$ of the optimal completion is not unique, since $\la_m=\la_{m+1}$ and  $t=t_0+\tr \, \cb=24>23=s^*$ 
(see  Theorem \ref{el St general}).
\EOE 
\end{exa}

\subsection{Minimizing potentials in $\cD_t(\cV)$}\label{min duals}

In this section we show how our previous results and techniques allow us to solve the problem of computing optimal duals in $\cD_t(\cF)= \{\cW\in \Fnd : T_\cW^*\,T_\cF = I $ and $ \tr \,S_\cW \ge t\}$ 
for a given frame $\cF$, described in Section \ref{duales con rest}.
In order to state our main results we introduce the set $\Lambda_t(\cD(\cV)\,)$, called the spectral picture of the set $\ese \cD(\cV)$ (see Eq. \eqref{defi esecdt}), given by 
$$
\Lambda_t(\cD(\cV)\,)= \{\la(S_\cW)\, :\, \cW\in \cD_t(\cV)\} \ .
$$

\begin{rem}\label{cacho prueba}
Recall from Remark \ref{estan en U}  that if  $\cF \in \Fnd$ with $\la = \la(S_\cV\inv)\,$, $m  = 2\, d-n$ and 
 $t\ge \tr \la$, then $\ese\cD_t(\cV)=U_t(S_{\cF^\#}\coma m)$. Hence, by Theorem \ref{el St general}, there exists a unique  $\nu\in\Lambda_t(\cD(\cV)\,)$  that is $\prec_w$-minimizer on this set. Moreover, recall that such vector $\nu$ is explicitly described in Proposition \ref{pdefi del nulam}. 
 \EOE
 \end{rem}

\begin{teo}[Spectral structure of Global minima in $\cD_t(\cF)$]
\label{cor nu}
Let $\RSV \in \Fnd$ with   $\la = \la(S_\cV\inv)\,$, $m  = 2\, d-n$ and 
 $t\ge \tr \la$. Let $\nu=\nu(\lambda,m,t)\in \R_{+}^d\,^\downarrow$ be as in Proposition \ref{pdefi del nulam}. Then, $\nu\in \Lambda_t(\cD(\cV)\,)$ and we have that: 
\ben 
\item 
If $\cW_t\in \cD_t(\cV)$ is such that $\lambda(S_{\cW_t})=
\nu$ then 
\begin{equation*}\label{eq hay min}
\sum_{i\in \IN{d}}f(\nu_i)=P_f(\cW_t)\leq P_f(\cW) \  \ 
\text{ for every } \cW\in \cD_t(\cV)\,, 
\end{equation*} 
and every increasing convex function $f:[0,\infty)\rightarrow [0,\infty)$.
\item 
If we assume further that $f$ is strictly convex then, for every global 
minimizer $\cW'_t$ of $P_f(\cdot)$ on $\cD_t(\cV)$
we get that $\lambda(\cW'_t)=\nu$.
\een
\end{teo}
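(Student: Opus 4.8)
The plan is to reduce Theorem \ref{cor nu} to the matrix-theoretic results already established, exactly as in the proof of Theorem \ref{teo F2}. By Remark \ref{cacho prueba} we have $\ese\cD_t(\cV)=U_t(S_{\cF^\#}\coma m)$ with $\la=\la(S_\cV\inv)$ and $m=2d-n$, and Theorem \ref{el St general} provides a vector $\nu$ that is $\prec_w$-minimal in $\Lambda(U_t(S_{\cF^\#}\coma m)\,)=\Lambda_t(\cD(\cV)\,)$; Proposition \ref{pdefi del nulam} gives its explicit form and shows $\tr\nu=t$. The first thing to check is that $\nu\in\Lambda_t(\cD(\cV)\,)$, i.e.\ that there actually exists a dual frame $\cW_t\in\cD_t(\cV)$ with $\la(S_{\cW_t})=\nu$: this follows because $\nu\in\Lambda(U_t(S_{\cF^\#}\coma m)\,)$ by Theorem \ref{el St general}, so there is $S\in U_t(S_{\cF^\#}\coma m)=\ese\cD_t(\cV)$ with $\la(S)=\nu$, and by definition of $\ese\cD_t(\cV)$ (Eq.\ \eqref{defi esecdt}) such an $S$ is the frame operator of some $\cW_t\in\cD_t(\cV)$. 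Note $\nu>0$ is automatic here since $\nu\ge\la(S_{\cF^\#})=\la(S_\cV\inv)>0$ entrywise, so no separate positivity argument is needed (unlike in Theorem \ref{teo F1}).

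For item 1, take any $\cW_t$ with $\la(S_{\cW_t})=\nu$ and any $\cW\in\cD_t(\cV)$. Then $\la(S_\cW)\in\Lambda_t(\cD(\cV)\,)$, so by the $\prec_w$-minimality in Theorem \ref{el St general} we have $\nu=\la(S_{\cW_t})\prec_w\la(S_\cW)$. Since $f$ is increasing and convex, item 2 of the list in Section \ref{subsec 2.2} gives $\tr f(\nu)=\sum_{i\in\IN d}f(\nu_i)=P_f(\cW_t)\le P_f(\cW)$, where I use $P_f(\cW)=\tr f(S_\cW)$ from Definition \ref{pot generales} and the fact that submajorization of matrices is submajorization of their eigenvalue vectors. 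For item 2, suppose $f$ is strictly convex and $\cW'_t$ is a global minimizer of $P_f$ on $\cD_t(\cV)$. From item 1, $P_f(\cW'_t)=P_f(\cW_t)=\tr f(\nu)$. Now $\nu\prec_w\la(S_{\cW'_t})$ (again by Theorem \ref{el St general}) together with $\tr f(\la(S_{\cW'_t}))=P_f(\cW'_t)=\tr f(\nu)$; by item 3 of the list in Section \ref{subsec 2.2}, there is a permutation $\sigma$ with $\la(S_{\cW'_t})_i=\nu_{\sigma(i)}$ for all $i$, and since both vectors are arranged in decreasing order this forces $\la(S_{\cW'_t})=\nu$, i.e.\ $\lambda(\cW'_t)=\nu$.

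I do not anticipate a serious obstacle: essentially all the work has been offloaded to Theorem \ref{el St general}, Proposition \ref{pdefi del nulam}, and the standard majorization-convexity facts recalled in Section \ref{subsec 2.2}. The only points requiring a little care are (i) confirming that the trace restriction $\tr S_\cW\ge t$ in the definition of $\cD_t(\cV)$ matches precisely the defining inequality $\tr(S_{\cF^\#}+B)\ge t$ of $U_t(S_{\cF^\#}\coma m)$ in Eq.\ \eqref{rev defi sdtv}, so that $\Lambda_t(\cD(\cV)\,)$ really equals $\Lambda(U_t(S_{\cF^\#}\coma m)\,)$ and the minimality statement transfers verbatim; and (ii) noting that we only get increasing convex $f$ here (not arbitrary convex $f$ as in Theorem \ref{teo F2}), because the vectors in $\Lambda_t(\cD(\cV)\,)$ need not have a common trace — they satisfy $\tr S_\cW\ge t$ rather than $\tr S_\cW=t$ — so only $\prec_w$, and hence only the increasing case, is available. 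With these observations in place the proof is a direct citation chain, and I would write it as such.
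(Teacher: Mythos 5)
Your proposal is correct and follows essentially the same route as the paper's own proof: reduce to the matrix model via Remark \ref{cacho prueba}, invoke the $\prec_w$-minimality of $\nu$ from Theorem \ref{el St general} (with the explicit form from Proposition \ref{pdefi del nulam}), and then apply the submajorization--convexity facts of Section \ref{subsec 2.2} for item 1 and the strict-convexity equality case for item 2. Your added observations --- that $\nu>0$ is automatic from $\nu\geqp\la(S_{\cF^\#})$, and that only $\prec_w$ (hence only increasing convex $f$) is available because the trace is bounded below rather than fixed --- are correct and merely make explicit points the paper leaves implicit.
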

\begin{proof}
As explained in Remark \ref{cacho prueba}
we see that $\nu \in\Lambda_t(\cD(\cV)\,) $ is such that 
$\nu\prec_w \mu$ for every $\mu\in \Lambda_t(\cD(\cV)\,)$. By the remarks in 
Section \ref{subsec 2.2} we conclude that, if $\cW_t$ is as above 
and $\cW\in \cD_t(\cV)$ then 
$$
P_f(\cW_t)=\tr(f(S_{\cW_t}))=\tr f(\nu)\leq \tr f(\lambda(S_\cW))=P_f(\cW)\ , 
$$ 
since $\lambda(S_\cW)\in\Lambda_t(\cD(\cV)\,)$.
Assume further that $f$ is strictly convex and let $\cW'_t$ be a global 
minimizer of $P_f(\cdot)$ on $\cD_t(\cV)$. Then, we have that 
$$
\nu\prec_w \lambda(S_{\cW'_t}) \peso{but} 
\tr f(\lambda(S_{\cW'_t})) 
= P_f(\cW'_t) \le  P_f(\cW_t) =  \tr \, f(\nu)\ .
$$ 
These last facts imply (see Section \ref{subsec 2.2}) that $\lambda(S_{\cW'_t})=\nu$ as desired.
\end{proof}

\pausa
Next we describe the  geometric structure of the global minimizers of the (generalized) 
frame potential $P_f(\cdot)$ in $\cD_t(\cV)$, in terms of their frame operators.

\begin{teo}[Geometric Structure of global minima in $\cD_t(\cF)$] \label{El S bis}
Let $\cV \in \RS(n \coma d)$, $m= 2d- n$, let $t\ge \tr \,S_\cV\inv \,$ and  
denote by $\la = \la(S_\cV\inv)$. Let $f:[0,\infty)\rightarrow [0,\infty)$ an increasing and strictly convex function. 
\ben
\item 
If $\cW\in \cD_t(\cF)$ is a global minimum of $P_f$ in $\cD_t(\cF)$ 
then there exists   $\{h_i\}_{i\in \IN{d}}\,$,
an ONB   of eigenvectors for $S_\cV\inv \coma \lambda$ such that
$$
S_\cW = S_\cV\inv +   \suml_{i=1}^{d-r'} \Big(\,c_{\la\coma m}( t)
-\la _{r'+i}\Big) \, 
h_{r'+i} \otimes h_{r'+i}  \ ,
$$
where $r'  = \max \{r_{\la\coma m}(t)\coma m\}$.
\item  If we further assume 
any of the conditions of item 3 of Theorem \ref{el St general}, 
there exists a unique $S_t\in\ese\cD_t(\cV)$ such that if $\cW$ is a global minimum of $P_f$ in $\cD_t(\cF)$ 
then $S_\cW=S_t$.
\een
\end{teo}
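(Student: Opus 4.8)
The plan is to reduce the statement to the matrix-model results of Section~\ref{unified} through the dictionary of Remark~\ref{estan en U}, and then to use Theorem~\ref{cor nu} to identify the spectrum of a minimizer. First I would record that, by Eq.~\eqref{rev defi sdtv} and Definition~\ref{conjs ese} (this is Remark~\ref{estan en U}, item 2), one has the identity $\ese\cD_t(\cV)=U_t(S_{\cV^\#}\coma m)$ with $S_{\cV^\#}=S_\cV\inv$ and $m=2d-n$. In particular, for every $\cW\in\cD_t(\cF)$ we get $S_\cW\in U_t(S_\cV\inv\coma m)$, so all conclusions of Theorem~\ref{el St general} apply to $S=S_\cW$ with $S_0=S_\cV\inv$ and $\la=\la(S_\cV\inv)$. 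I would also note that $S_\cV$ and $S_\cV\inv$ have the same eigenvectors, so the phrases ``ONB of eigenvectors for $S_\cV\inv\coma\la$'' and ``for $S_\cV\coma\la$'' are interchangeable; and that one must allow $m=2d-n\le 0$, in which case the relevant branches of Definition~\ref{el d y r prima} and Theorem~\ref{el St general} are used.

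For item 1, since $f$ is increasing and strictly convex, Theorem~\ref{cor nu}(2) asserts that any global minimizer $\cW$ of $P_f(\cdot)$ on $\cD_t(\cV)$ satisfies $\la(S_\cW)=\nu$, where $\nu=\nu(\la\coma m\coma t)$ is the $\prec_w$-minimal vector of Theorem~\ref{el St general} (equivalently, of Proposition~\ref{pdefi del nulam}). Having this, I would apply the equivalence (a)$\iff$(b) in item 2 of Theorem~\ref{el St general} to the matrix $S=S_\cW\in U_t(S_\cV\inv\coma m)$: the equality $\la(S_\cW)=\nu$ is exactly $\prec_w$-minimality of $S_\cW$ in $U_t(S_\cV\inv\coma m)$, hence there is an ONB $\{h_i\}_{i\in\IN{d}}$ of eigenvectors for $S_\cV\inv\coma\la$ with
\[
S_\cW-S_\cV\inv \ = \ \suml_{i=1}^{d-r'}\big(c_{\la\coma m}(t)-\la_{r'+i}\big)\, h_{r'+i}\otimes h_{r'+i}\coma
\]
where $r'=\max\{r_{\la\coma m}(t)\coma m\}$ and the constant is $c=c_{\la\coma m}(t)$. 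Since the parameters $r'$ and $c$ in Theorem~\ref{el St general} coincide verbatim with those in the present statement, this is precisely the asserted formula for $S_\cW$.

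For item 2, I would invoke item 3 of Theorem~\ref{el St general}: under any of the listed hypotheses ($m\le 0$; or $m\ge1$ with $\la_m>\la_{m+1}$; or $m\ge1$ with $\la_m=\la_{m+1}$ but $t\le s^*(\la\coma m)$), the matrix $S\in U_t(S_\cV\inv\coma m)$ with $\la(S)=\nu$ is unique. Setting $S_t$ equal to this matrix (equivalently, to the right-hand side of the displayed formula, which by the uniqueness statement in item 3 of Theorem~\ref{el St general} is independent of the chosen ONB of eigenvectors), we conclude from item 1 that every global minimum $\cW$ of $P_f$ on $\cD_t(\cF)$ has $S_\cW=S_t$. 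I would close by noting that $S_t$ is a genuine element of $\ese\cD_t(\cV)$: indeed $\nu\in\Lambda_t(\cD(\cV)\,)$ by Remark~\ref{cacho prueba}, so $\ese\cD_t(\cV)$ does contain a matrix with spectrum $\nu$ (hence, under the uniqueness hypotheses, exactly one, namely $S_t$).

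The argument is essentially bookkeeping once Theorems~\ref{el St general} and~\ref{cor nu} are available; I do not expect a real obstacle. The only delicate points are purely notational: confirming that the matrix model is being fed the correct parameters ($S_0=S_\cV\inv$, $m=2d-n$, possibly $\le 0$, and $t\ge\tr S_\cV\inv$), and matching the quantities $r'$, $c_{\la\coma m}(t)$, $\nu$ across Sections~\ref{unified} and~\ref{min duals}. All the substantive work — the very existence of a $\prec_w$-minimizer in $U_t(S_0\coma m)$ and the description of its geometric structure — has been isolated in the Appendix and is used here as a black box.
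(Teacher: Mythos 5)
Your proposal is correct and matches the paper's (very terse) proof, which simply cites Theorem~\ref{el St general}, Theorem~\ref{cor nu}, and Proposition~\ref{pdefi del nulam}: you spell out exactly the intended chain — the identification $\ese\cD_t(\cV)=U_t(S_\cV\inv\coma m)$ from Remark~\ref{estan en U}, the spectral identification $\la(S_\cW)=\nu$ from Theorem~\ref{cor nu}(2), the geometric form from the equivalence in Theorem~\ref{el St general}(2), and uniqueness from Theorem~\ref{el St general}(3). The side remark that eigenbases for $S_\cV$ and $S_\cV\inv$ coincide is unnecessary (the statement is already phrased in terms of $S_\cV\inv$) but harmless.
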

\proof 
It is a consequence of Theorems \ref{el St general} and \ref{cor nu} together with Proposition \ref{pdefi del nulam}.
\QED

\begin{rem} 
Fix $\RSV \in \Fnd$ 
and  $m = 2\,d-n$. 
Denote by $\la = \la(S_\cF\inv)$.  If $m >0$ 
then there exist $t\in \R_{>0} $
and a constant vector 
\beq\label{la1 lam}
c\, \uno_d \in  \Lambda_t(\cD(\cV)\,) 
\iff 
\la_1 = \la_m\ . 
\eeq
In this case $c = \la_1$ and $ t = d\, \la_1\,$. 
The proof uses the characterization of 
$\Lambda_t(\cD(\cV)\,)$ given in  Remark \ref {estan en U} 
and Corollary \ref{cor nuevo} (see also Definition \ref{def Lalam}). 
Indeed, if $\nu = c\, \uno_d \in  
\Lambda_t(\cD(\cV)\,) $ then, by Eq. \eqref{Lalam},  
$$
c = \nu_d \le \la_m \le \la_1 \le \nu_1 = c \implies 
\la_1 = \la_m = c\ . 
$$
Conversely, if $\la_1 = \la_m$ and $ t = d\, \la_1\,$, then 
by Corollary \ref{cor nuevo} 
it is easy to see that the vector 
$\la_1 \, \uno_d \in  \Lambda_t(\cD(\cV)\,)$. 
Therefore the frame $\cF$ has a 
dual frame which is tight \sii 
\bit 
\item $m = 2\,d-n \le 0$. Recall that in this case 
$\nu_{\la\coma m}(t) = \frac{t}{d} \cdot \uno_d$
for every $t\ge d\, \la_1\,$.  
\item  $m\in \IN{d-1}$ and $\la_{d-m+1}(S_\cF) = \la_{d}(S_\cF)$ i.e., the multiplicity of the smaller eigenvalue $\la_{d}(S_\cF)$ of $S_\cF$ is  greater or equal than $m$. 
This is a consequence of 
Eq. \eqref{la1 lam}.  
\eit
In particular, if $m\in \IN{d-1}$ then there is a Parseval dual frame for $\cF$ \sii 
$$
\la_{d-m+1}(S_\cF) = \la_{d}(S_\cF) = 1 \iff S_\cF\ge I_d \py 
\rk \, (I_d-S_\cF)  \le d-m =  \dim \ker T_\cF^*\ .
$$ 
Observe that the equivalence also holds if $2d\le n$. In this case  
there is a Parseval dual frame for $\cF \iff S_\cF\ge I_d \,$,
because the restriction $\dim \ker T_\cF^* = n-d \ge d \ge \rk \, (I_d-S_\cF)$ 
is irrelevant. This characterization  was already proved by Han  
in \cite{Han}, even for the infinite dimensional case.
 \EOE

\end{rem}

\begin{rem} \label{rem algo 3}
Using  the characterization of $\cD(\cV)$ given in the proof of Proposition \ref{los S 1} every  optimal dual frame $\cW=\{g_i\}_{i\in \IN{n}}$ is  constructed from the canonical dual of $\cV$: 
each $g_i=S_{\cV}^{-1}\, f_i+h_i\,$, for a 
 family $\cV_1=\{h_i\}_{i\in \IN{n}}$ which satisfies $T_{\cV_1}^*T_{\cV_1}=B$ and 
$T_{\cV_1}^*T_\cV=0$.
\EOE
\end{rem}

\pausa
As it was done with the completion problem, the previous results can be implemented in M{\scriptsize ATLAB} in order to construct optimal dual frames for a given one when a  a tracial condition is imposed. 

\pausa
It turns out that in this case,  once we have calculated the optimal $B$, we must improve a different type of factorization of $B$.  
Now  $B=X^*X$ should satisfy $R(T_\cV)\inc \ker X^*$. In the algorithm developed, $X^*=B\rai W^*$ where $B\rai$ has no cost of construction since we already have the eigenvectors of $S_\cF^{-1}$ . In addition $W$ is constructed using the first $d-r$ vectors of the ONB of $\ker T_{\cV}^*$ (computed with the 'null' function) and adding $r$ zero vectors in order to obtain an $n\times d$ partial isometry.

\begin{exa}\label{exa dual}
The frame operator of the following frame  $\cF\in \RS(8\coma 5)$ has eigenvalues listed by $\la=(\frac{5}{2}\coma 2\coma \frac{2}{3}\coma\frac{1}{3}\coma \frac{1}{4})$:
\[
T_{\cF}^* = {\scriptsize  
\left[
 \begin{array}{rrrrrrrr}
   -0.5124&    0.5695&    0.4542&   -0.3527&   -0.2452&    0.1260&    0.0558&   -0.3513\\
   -0.4965&    0.0478&    0.1579&   -0.2299&   -0.9348&   -0.6935&   -0.0836&    0.7641\\
    0.2777&    0.2875&   -0.4974&    0.0086&    0.1893&   -0.0916&    0.2501&   -0.0722\\
   -0.3793&   -0.7849&   -0.4783&   -0.2566&    0.3450&   -0.0749&   -0.2939&    0.3785\\
    0.0725&   -0.0803&   -0.2075&   -0.2967&   -0.1518&    0.2077&   -0.2050&    0.4226\\

 \end{array}
 \right]}
 \ .\]
Therefore, $\la=\la(S_\cV\inv)=(4\coma 3\coma \frac{3}{2}\coma \frac{1}{2}\coma \frac{2}{5})$ and $\tr \,S_\cV\inv \,=9.4$. We also have that $m=2d-n=2$. Consider $t=16.5$, then an optimal dual $\cW\in \cD_t(\cF)$ for $\cF$ is given by
\[
T_{\cW}^* = {\scriptsize  
\left[
 \begin{array}{rrrrrrrr}
   -1.0236&    0.2319&   -0.0181&   -0.5802&    0.0438&    0.7316&    1.0846&   -0.0143\\
   -0.2583&    0.6148&    0.5219&    0.1585&    0.6493&   -0.7116&    0.2109&    1.2138\\
    0.3080&    0.6525&   -1.0323&   -0.8031&    0.2306&   -0.8740&    0.2856&   -0.3488\\
   -1.1868&    0.1198&   -0.8331&    0.4816&    0.4222&   -0.0495&   -0.8551&   -0.0836\\
    0.5506&    0.5428&   -0.2035&   -0.5871&   -0.2309&    1.1268&   -0.7891&    0.8432\\

     \end{array}
 \right]}
 \ .\]
 Here, the optimal spectrum $\nu=\nu_{\la \coma m}(16.5)$ is given by 
 $\nu=(4\coma 3.166\coma 3.166\coma 3.166 \coma 3)$.
 \EOE
\end{exa}

\section{APPENDIX - Proof of Theorem \ref{el St general}}\label{appendixiti}

In this section we obtain the proofs of Theorem \ref{el St general} and Proposition \ref{pdefi del nulam} stated in Section \ref{unified} in a series of steps. In the first step we introduce the set $U(S_0,m):=U_{\tr(S_0)}(S_0,m)$ and characterize its spectral picture $\Lambda(U(S_0,m))$ - i.e. the subset of $\R_{+}^d\, ^\downarrow$ of eigenvalues $\lambda(S)$, for $S\in U(S_0,m)$ - in terms of the so-called Fan-Pall inequalities. In the second step we show the existence of a $\prec_w$-minimizer within the set $\Lambda(U_t(S_0,m))$ and give an explicit (algorithmic) expression for this vector. Finally, in the third step we characterize the geometrical structure of the positive operators $S\in U_t(S_0,m)$ such that $\lambda(S)$ are $\prec_w$-minimizers within the set $\Lambda(U_t(S_0,m))$, in terms of the relation between the eigenspaces of $S$ and the eigenspaces of $S_0$. It is worth pointing out that the arguments in this section are constructive, and lead to algorithms that allow to effectively compute all the parameters involved.

\section*{Step 1: spectral picture of $U(S_0,m)$}\label{sec EP}
Recall that $\R_{+}^d\, ^\downarrow$ is the 
set of vectors $\mu \in \R_+^d$ with non negative and decreasing entries (i.e. $\mu\in \R_{+}^d$ with $\mu^\downarrow=\mu$);
also, given $S\in \matrec{d}^+$, $\la(S) \in 
\R_{+}^d\,^\downarrow$ denotes the 
vector of eigenvalues of $S$ - counting multiplicities - and arranged in decreasing order. 

Given $S_0\in \matpos$, $m<d$ and integer and $t\geq \tr(S_0)$ then in Eq. \eqref{defi usubt} we introduced
$U_t(S_0\coma m)=\{S_0+B:\   B\in \matpos \, , \ \rk \, B 
\le d-m \ , \ \tr(S_0+B)\geq t\ \}$. In this section we consider $$U(S_0,m):=U_{\tr(S_0)}(S_0,m)= \{S_0+B:\   B\in \matpos \, , \ \rk \, B 
\le d-m \ \}\ $$ together with its spectral picture $\Lambda(U(S_0,m)):=\Lambda(U_{\tr(S_0)}(S_0,m))$ (see Eq. \eqref{defi imag espec usubt} in Remark \ref{rem imag espec usubt}).
We shall also use the following notations: 
\ben
\item Given $x\in \C^d$ then $D(x) \in \mat $ denotes the 
diagonal matrix with main diagonal $x$. 
\item If $d\le n$ and $y \in \C^d$, we write $ (y\coma 0_{n-d})\in \cene$, where  
$0_{n-d}$ is the zero vector of $\C^{n-d}$. 
In this case, we denote by $D_n(y) = D\big( \, (y\coma 0_{n-d})\,\big) \in \matrec{n}$. 
\een

\begin{teo} \label{TP}
Let $S_0\in\matpos$, $m<d$ be an integer and $\mu\in \R_{+}^d\, ^\downarrow$. 
Then the following conditions are equivalent: 
\ben
\item\label{cond1} 
There exists $S\in U(S_0\coma m)$ such that $\lambda(S)=\mu$. 
\item\label{cond2} There exists an orthogonal projection 
$P\in \matrec{2d-m} $ such that $\rk \,P=d$ and  
\beq\label{item2}
\lambda\left( P\, D_{2d-m}(\mu )\, P\right)= 
\big(\, \lambda(S_0)\coma 0_{d-m}\,\big) \ .
\eeq
\een
\end{teo}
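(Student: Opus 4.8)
The plan is to prove the equivalence of \eqref{cond1} and \eqref{cond2} by routing both through a third statement: the existence of a positive semidefinite matrix $M\in\matrec{2d-m}$ whose leading $d\times d$ principal block has spectrum $\la(S_0)$ and whose spectrum is $\big(\mu\coma 0_{d-m}\big)$. The bridge in each direction is a factorization $M=Z^*Z$ with $Z\in\matrec{\,\rk M\,,\,2d-m}$: splitting the $2d-m$ columns of $Z$ as $[\,Z_1\ \ Z_2\,]$, with $Z_1$ carrying the first $d$ columns and $Z_2$ the last $d-m$, the single identity $ZZ^*=Z_1Z_1^*+Z_2Z_2^*$ becomes (after zero–padding) an instance of $S=S_0+B$ with $\rk B\le d-m$, while the block $Z_1^*Z_1$ is a $d$–dimensional compression of $M$ and hence, after a unitary conjugating $M$ to $D_{2d-m}(\mu)$, a compression of $D_{2d-m}(\mu)$ of the form $P\,D_{2d-m}(\mu)\,P$. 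Conceptually this is the Cauchy/Fan--Pall duality between rank-$(\le d-m)$ positive perturbations of $S_0$ and compressions of $D_{2d-m}(\mu)$ to subspaces of codimension $d-m$.

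For \eqref{cond1}$\,\Rightarrow\,$\eqref{cond2} I would start from $S=S_0+B$ with $B\in\matpos$, $\rk B\le d-m$, $\la(S)=\mu$. Since $0\le S_0\le S$ and $0\le B\le S$, both $S_0$ and $B$ are supported on $R(S)$, which has dimension $p=\#\{i:\mu_i>0\}$, and the compression of $S$ to $R(S)$ has spectrum $(\mu_1\coma\dots\coma\mu_p)$. Factor $B|_{R(S)}=Z_2Z_2^*$ with $Z_2$ having $d-m$ columns (possible as $\rk B\le d-m$), and write $S_0|_{R(S)}=Z_1Z_1^*$ with $Z_1$ having $d$ columns chosen so that $Z_1^*Z_1$ has spectrum $\la(S_0)$ (possible since $\rk S_0\le p$). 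Then $M:=[\,Z_1\ \ Z_2\,]^*[\,Z_1\ \ Z_2\,]$ has leading block $Z_1^*Z_1$ of spectrum $\la(S_0)$ and $\la(M)=\big(\la(Z_1Z_1^*+Z_2Z_2^*)\coma 0\big)=(\mu\coma 0_{d-m})$. Diagonalizing $M=\Omega\,D_{2d-m}(\mu)\,\Omega^*$ with $\Omega$ unitary, and letting $J$ be the inclusion of the first $d$ coordinates, $\Omega^*J$ is an isometry with $(\Omega^*J)^*D_{2d-m}(\mu)(\Omega^*J)=Z_1^*Z_1$; so $P:=(\Omega^*J)(\Omega^*J)^*$ is a rank-$d$ orthogonal projection satisfying \eqref{item2}.

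For \eqref{cond2}$\,\Rightarrow\,$\eqref{cond1} I would complete an isometry $U$ onto $R(P)$ by an isometry $U_\perp$ onto $R(P)^\perp$ (dimension $d-m$), so that $[\,U\ \ U_\perp\,]$ is unitary on $\C^{2d-m}$ and $M:=[\,U\ \ U_\perp\,]^*D_{2d-m}(\mu)[\,U\ \ U_\perp\,]$ is positive with $\la(M)=(\mu\coma 0_{d-m})$ and leading block $U^*D_{2d-m}(\mu)U$ of spectrum $\la(S_0)$ by \eqref{item2}. Factoring $M=[\,Z_1\ \ Z_2\,]^*[\,Z_1\ \ Z_2\,]$ with $Z_1,Z_2$ having $d$ and $d-m$ columns, one gets $Z_1^*Z_1$ of spectrum $\la(S_0)$, $Z_2Z_2^*\ge 0$ of rank $\le d-m$, and $Z_1Z_1^*+Z_2Z_2^*$ of spectrum $(\mu_1\coma\dots\coma\mu_p)$ with $p=\rk M\le d$. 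Padding the $p\times p$ matrices $Z_1Z_1^*$ and $Z_2Z_2^*$ with $d-p$ zeros produces $d\times d$ matrices $\widehat{S_0}$ of spectrum $\la(S_0)$ and $\widehat B\ge 0$ of rank $\le d-m$ with $\la(\widehat{S_0}+\widehat B)=\mu$; conjugating by a unitary that carries $\widehat{S_0}$ to $S_0$ yields $B\in\matpos$ with $\rk B\le d-m$ and $\la(S_0+B)=\mu$, i.e. $S_0+B\in U(S_0\coma m)$, which is \eqref{cond1}.

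The main obstacle is the bookkeeping with zero eigenvalues: one must check that $p=\rk M=\#\{i:\mu_i>0\}\le d$, that $\rk S_0\le p$ (so that a $Z_1$ with $p$ rows can have $Z_1^*Z_1$ of spectrum $\la(S_0)$), that the ``$d-m$ columns'' of $Z_2$ are simultaneously available (from $\rk B\le d-m$) and exactly the codimension of $R(P)$, and that the zero–paddings restore the full vectors $\la(S_0)$ and $\mu$ rather than truncations. None of this is deep, but it must be carried out carefully, and it is where the hypothesis $m<d$ and the ordering of $\mu$ enter. As a payoff, once Theorem \ref{TP} is established, feeding $D_{2d-m}(\mu)$ (whose sorted spectrum is $(\mu_1\coma\dots\coma\mu_d\coma 0_{d-m})$) into the classical Fan--Pall interlacing theorem for compressions to a subspace of codimension $d-m$ converts \eqref{cond2}, hence the description of $\Lambda(U(S_0\coma m))$, into the explicit inequalities $\la_i(S_0)\le\mu_i$ for $i\in\IN{d}$ and $\mu_{\,i+d-m}\le\la_i(S_0)$ for $i\in\IN{m}$.
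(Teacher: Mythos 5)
Your proof is correct and uses the same core mechanism as the paper's: build a $(2d-m)\times(2d-m)$ Hermitian dilation whose spectrum is $(\mu\coma 0_{d-m})$ and whose leading $d\times d$ block has spectrum $\lambda(S_0)$, exploit the identity $\lambda(ZZ^*)=\lambda(Z^*Z)$ up to zero padding, and convert between the dilation and $S_0+B$ via factorization. The only real difference is the shape of the factor. You work with a wide factor $Z\in\cM_{p,\,2d-m}(\C)$, $p=\rk M$, which forces you to compress $S_0$ and $B$ to $R(S)$, keep track of $p$, and pad back up to dimension $d$ at the end --- exactly the bookkeeping you flag as the ``main obstacle.'' The paper instead takes a tall factor $T=\begin{pmatrix}S_0^{1/2}\\V\end{pmatrix}\in\cM_{2d-m,\,d}(\C)$ with $V^*V=B$, $V\in\cM_{d-m,\,d}(\C)$. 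Then $T^*T=S_0+B$ is already $d\times d$, so $\lambda(TT^*)=(\mu\coma 0_{d-m})$ with no rank argument, and the leading block of $TT^*$ is \emph{literally} $S_0$ (not merely something cospectral with it), so no compression or padding is ever introduced. In the converse direction the paper similarly factors $U^*D_{2d-m}(\mu)U=TT^*$ with $T\in\cM_{2d-m,\,d}(\C)$, partitions $T=\begin{pmatrix}T_1\\T_2\end{pmatrix}$, and reads $T^*T=T_1^*T_1+T_2^*T_2$ directly as a $d\times d$ equation. So your argument is sound, but choosing the $(2d-m)\times d$ orientation for the factor eliminates all the zero-eigenvalue housekeeping.
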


\proof

$1 \Rightarrow 2$. Let $B\in\matpos$ be such that $\rk(B)\leq d-m$ and $\lambda(S_0+B)=\mu$. 
Thus, $B$ can be factorized as $B=V^*V$ for some $V\in \cM_{d-m,\,d}(\C)$. 
If 
\beq\label{reveq1} 
T=\begin{pmatrix}\ S_0\rai\\ V\end{pmatrix}\in\cM_{2d-m,\,d}(\C)
 \ \ \Rightarrow \ \ T^*T=S_0+B \py 
 TT^*=\begin{pmatrix}S_0 & S_0^{1/2}V^* \\ V\,S_0^{1/2} & V\,V^*\end{pmatrix}\  .
\eeq
Let $U\in \cU(2d-m)$ be such that $U(TT^*)U^*=D(\lambda(TT^*))=D_{2d-m}(\mu)$ and let 
$P\in \cM_{2d-m }(\C)$ be given by $P=U\,P_1\,U^*$, where $P_1=I_d\oplus 0_{d-m}\,$. 
Notice that, by construction, $P$ is an orthogonal projection with $\rk \,P=d$ and, 
by the previous facts, 
$$ 
P\,D_{2d-m}(\mu)\,P= U \,P_1\,(TT^*)\,P_1\,U^*=
U \begin{pmatrix}S_0& 0\\ 0 & 0 \end{pmatrix}U^*\ ,
$$ 
which shows that Eq. \eqref{item2} holds in this case.

\pausa
$2 \Rightarrow 1$. Let $P\in \cM_{2d-m}(\C)$ be a 
projection as in item 2. 
Then, there exists $U\in \cU(2d-m)$ such that $U^*P\,U=P_1\,$, 
where $P_1=I_d\oplus \,0_{d-m}$ as before. Hence, we get that 
\beq\label{reveq2}
\lambda (P_1 (U^*D_{2d-m}(\mu)\,U)\,P_1)=\lambda(S_0,0_{d-m})\ .
\eeq  
Since $\rk(U^* D_{2d-m}(\mu)\,U)\leq d$ then we see that there exist 
$T\in \cM_{2d-m,\,d}(\C)$  such that $U^*D_{2d-m}(\mu)\,U=TT^*$. 
Let $T_1\in \matpos$ and  $T_2\in \cM_{d-m,d}(\C)$ such that 
$$
T=\begin{pmatrix} T_1\\T_2 \end{pmatrix}  \implies 
U^*D_{2d-m}(\mu)\,U=TT^*
=\begin{pmatrix} T_1T_1^* & T_1T_2^*\\ T_2T_1^*&T_2T_2^*\end{pmatrix}\ .
$$ 
Then   $\lambda(T_1T_1^*)=\lambda(S_0)$ by Eq. \eqref{reveq1}. 
On the other hand, notice 
that $\lambda(T^*T)=\mu$ and 
$$
T^*T=T_1^*T_1+T_2^*T_2 \igdef S_1+B_1
\peso{with} \lambda(S_1)=\lambda(T_1T_1^*)=\lambda(S_0)  \py\rk(B_1)\leq d-m \ .
$$ 
Let $W\in \cU(d)$ such that $W^*S_1W=S_0\,$. Then $S \igdef W^*(T^*T)W=S_0+B$ 
satisfies that  $\lambda(S)=\mu$ and $\rk(B)=\rk(W^*B_1W)\leq d-m$. 
Then $\mu = \lambda(T^*T)=\la(S)\in \Lambda(U(S_0\coma m)\,)$. 
\QED

\begin{rem} \label{FP}

Let $S_0\in\matpos$, $m<d$ be an integer and $\mu\in \R_{+}^d\, ^\downarrow$ 
as in Theorem  \ref{TP}. It turns out that condition \eqref{item2} can be characterized in terms of 
interlacing inequalities. 

\pausa
More explicitly, given $\mu\in \R_{+}^d\, ^\downarrow\,$, 
by the Fan-Pall inequalities (see \cite {LM}),  the existence of a projection $P\in \matrec{2d-m}$ 
satisfying \eqref{item2} for $\mu$ 
is equivalent to the following inequalities: 
\begin{enumerate}
\item 
$\mu \geqp \lambda(S_0)$, i.e. 
$\mu_i \ge \lambda_{i}(S_0)$ for every  $i \in \IN{d}\,$. 
\item If $m\geq 1$ 
then  $\mu$ also satisfies 
$$
\mu_{d-m+i}  \le \la_i (S_0 )
\peso{for every} i \in \IN{m}  
\ ,
$$
where the last inequalities compare the first $m$ entries of $\la(S_0 )$ 
with the last $m$ of $\mu$. 
\end{enumerate}
These facts together with Theorem \ref{TP} give a complete description 
of the spectral picture  of the set $U(S_0 \coma m)$,
 which we write as follows. 
\EOE
\end{rem}

\begin{cor}\label{desigualdades de espectros de duales}
Let $S_0\in\matpos$ and $m<d$ be an integer.
Then, the set $\Lambda(U(S_0\coma m))$ can be characterized as follows: 
\ben 
\item If $m\leq 0$, we have that 
\beq\label{mayor 2d}
\mu  \in \Lambda(U(S_0\coma m)) \iff \mu \geqp \lambda(S_0 )\ .
\eeq
\item If $m\geq 1$, then 
\beq\label{menor 2d}
\mu \in  \Lambda(U(S_0\coma m)) \iff \mu \geqp \la (S_0 ) \py 
\mu_{d-m+i}  \le \la_i (S_0)
\peso{for} i \in \IN{m}  \ .
\eeq
\een
\end{cor}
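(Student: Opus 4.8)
The plan is to obtain the corollary as a direct combination of Theorem \ref{TP} with the Fan--Pall interlacing inequalities recalled in Remark \ref{FP}. Fix $\mu\in\R_{+}^d\,^\downarrow$. By Theorem \ref{TP}, $\mu\in\Lambda(U(S_0\coma m))$ if and only if there is an orthogonal projection $P\in\matrec{2d-m}$ with $\rk\,P=d$ and
\[
\lambda\big(P\,D_{2d-m}(\mu)\,P\big)=\big(\lambda(S_0)\coma 0_{d-m}\big)\ .
\]
So the task reduces to deciding, for the fixed Hermitian matrix $A:=D_{2d-m}(\mu)$, when there is a rank-$d$ projection $P$ whose compression $P\,A\,P\big|_{R(P)}$ has prescribed spectrum $\lambda(S_0)$ (the remaining $d-m$ eigenvalues of $P\,A\,P$ being forced to be $0$, since $\dim\ker P=d-m$).

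First I would record the eigenvalue list of $A=D_{2d-m}(\mu)$: it is $a=(\mu_1\coma\dots\coma\mu_d\coma 0\coma\dots\coma 0)\in\R_{+}^{2d-m}$ with $d-m$ trailing zeros, which is already non-increasing because $\mu=\mu^\downarrow\geqp 0$. By Fan--Pall (\cite{LM}; see Remark \ref{FP}), a projection $P$ as above exists precisely when $\lambda(S_0)$ interlaces $a$ in the usual way, i.e.
\[
a_i\ \geq\ \lambda_i(S_0)\ \geq\ a_{i+d-m}\peso{for every}i\in\IN{d}\ ,
\]
where $a_{i+d-m}:=0$ whenever $i+d-m>d$. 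The upper inequalities say $\mu_i\geq\lambda_i(S_0)$ for all $i\in\IN{d}$, i.e. $\mu\geqp\lambda(S_0)$. The lower inequalities are automatic when $i+d-m>d$, that is when $i>m$, so they have content only for those $i\in\IN{d}$ with $i\le m$, where they read $\lambda_i(S_0)\ge a_{i+d-m}=\mu_{i+d-m}=\mu_{d-m+i}$.

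To conclude I would split into the two stated cases. If $m\le 0$, then every $i\in\IN{d}$ satisfies $i>m$, so all lower inequalities are vacuous and the membership condition collapses to $\mu\geqp\lambda(S_0)$, which is Eq. \eqref{mayor 2d}. If $m\ge 1$ (recall $m<d$), the surviving lower inequalities are exactly $\mu_{d-m+i}\le\lambda_i(S_0)$ for $i\in\IN{m}$, and together with $\mu\geqp\lambda(S_0)$ these give Eq. \eqref{menor 2d}. I do not anticipate a genuine obstacle: the statement is essentially a bookkeeping reformulation of Remark \ref{FP}, and the only point requiring care is the index shift in the Fan--Pall lower bound, namely that the ambient matrix $A$ has size $2d-m$ rather than $2d$, so that the shift is by $(2d-m)-d=d-m$ and one must correctly absorb the $d-m$ trailing zeros of $a$ before reading off the surviving constraints.
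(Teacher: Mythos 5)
Your proposal is correct and takes essentially the same approach as the paper: Theorem \ref{TP} reduces the membership question to the existence of a rank-$d$ projection with prescribed compression spectrum, and the Fan--Pall interlacing inequalities of Remark \ref{FP} are then applied to $A=D_{2d-m}(\mu)$ with ordered eigenvalue list $a=(\mu\coma 0_{d-m})$, yielding $\mu\geqp\lambda(S_0)$ from the upper bounds and (when $m\ge1$) the $m$ surviving lower bounds $\mu_{d-m+i}\le\lambda_i(S_0)$. The only difference is that you spell out the index bookkeeping (the shift by $(2d-m)-d=d-m$ and the absorption of the trailing zeros of $a$) which the paper leaves implicit in Remark \ref{FP}.
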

\proof 
It follows from Theorem \ref{TP} and the 
Fan-Pall inequalities of 
Remark \ref{FP}. \QED

\begin{cor}\label{La conv} 
Let $S_0\in\matpos$ and $m<d$ be an integer.
  Then $ \Lambda(U(S_0\coma m))$ is convex.
\end{cor}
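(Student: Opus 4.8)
The plan is to deduce the convexity of $\Lambda(U(S_0\coma m))$ directly from the explicit description obtained in Corollary \ref{desigualdades de espectros de duales}, rather than going back to the matrix picture of Theorem \ref{TP}. The key observation is that in both cases ($m\le 0$ and $m\ge 1$) the set $\Lambda(U(S_0\coma m))$ is cut out from $\R_{+}^d\,^\downarrow$ by finitely many linear (in fact coordinatewise) inequalities, together with the order constraint $\mu=\mu^\downarrow$; and an intersection of finitely many half-spaces with a convex set is convex.

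Concretely, first I would recall that $\R_{+}^d\,^\downarrow=\{\mu\in\R^d: \mu_1\ge\mu_2\ge\cdots\ge\mu_d\ge 0\}$ is itself convex, being defined by the linear inequalities $\mu_i-\mu_{i+1}\ge 0$ for $i\in\IN{d-1}$ and $\mu_d\ge 0$. Next, if $m\le 0$, Corollary \ref{desigualdades de espectros de duales} gives $\Lambda(U(S_0\coma m))=\{\mu\in\R_{+}^d\,^\downarrow:\mu\geqp\la(S_0)\}$, i.e. the extra constraints are $\mu_i\ge\la_i(S_0)$ for $i\in\IN{d}$, each a closed half-space; hence the set is an intersection of a convex set with finitely many half-spaces, so convex. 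If $m\ge 1$, the description adds the further linear constraints $\mu_{d-m+i}\le\la_i(S_0)$ for $i\in\IN{m}$, again closed half-spaces, so the conclusion is the same. I would write this out as: if $\mu,\mu'\in\Lambda(U(S_0\coma m))$ and $s\in[0,1]$, then $s\mu+(1-s)\mu'$ lies in $\R_{+}^d\,^\downarrow$ (convexity of that set) and satisfies all the inequalities in \eqref{mayor 2d} or \eqref{menor 2d} because each such inequality is preserved under convex combinations; hence $s\mu+(1-s)\mu'\in\Lambda(U(S_0\coma m))$.

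There is essentially no serious obstacle here: the only mild subtlety is making sure one does not need to reprove that convex combinations of ordered vectors are ordered — but that is immediate since $\mu\mapsto\mu_i-\mu_{i+1}$ is linear and nonnegative on both endpoints. One could alternatively argue that $\Lambda(U(S_0\coma m))$ is the image of the convex set $\{S\in\matpos: S\ge S_0\coma \rk(S-S_0)\le d-m\}$ — which is itself \emph{not} convex because of the rank constraint — under the eigenvalue map, so that route does not work directly; this is precisely why passing through the Fan–Pall description of Corollary \ref{desigualdades de espectros de duales} is the right move. So the proof is just the remark that a finite intersection of half-spaces with the convex set $\R_{+}^d\,^\downarrow$ is convex.
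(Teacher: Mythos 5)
Your argument is exactly the paper's: the paper proves Corollary \ref{La conv} by observing that the inequalities \eqref{mayor 2d} and \eqref{menor 2d} from Corollary \ref{desigualdades de espectros de duales} are preserved under convex combinations and that $\R_{+}^d\,^\downarrow$ is itself convex. Your write-up spells this out in slightly more detail (and sensibly flags why one cannot argue directly via the non-convex rank-constrained set), but the approach and key ingredient are the same.
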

\proof It is clear that the inequalities given in Eqs. \eqref{mayor 2d} and \eqref{menor 2d}
are preserved by convex combinations. Observe that also the set $\R_{+}^d\,^\downarrow $
is convex. \QED

\begin{rem}\label{los espectros no determinan} 
Let $S_0\in\matpos$, $m<d$ be an integer and $S\in \matpos$. The reader should note that the fact that $\lambda(S)\in \Lambda(U(S_0\coma m))$ does not imply that $S\in U(S_0\coma m)$. Indeed, it is fairly easy to produce examples of this phenomenon. Therefore, the spectral picture of $\Lambda(U(S_0\coma m))$ does not determine the set $U(S_0\coma m)$. This last assertion is a consequence of the fact that $U(S_0\coma m)$ is not saturated by unitary equivalence. Nevertheless, $\Lambda(U(S_0\coma m))$ allows to compute minimizers of submajorization in $U(S_0\coma m)$, since submajorization is an spectral preorder.
\end{rem}

\section*{Step 2: minimizers for submajorization in $\Lambda(U_t(S_0,m))$}\label{mayo vec}
The spectral picture of $U(S_0\coma m)$ studied in the previous section 
motivates the definition of the following sets.

\begin{fed}\label{def Lalam}\rm 
Let $\lambda\in \R_{+}^d\,^\downarrow$ 
and take an integer $m<d$.     
We consider the set 
\beq\label{Lalam}
\Lambda(\la \coma m)=
\begin{cases} \ 
 \big\{\mu\in \R_{+}^d\,^\downarrow:
 \ \mu\geqp \lambda\,  \big\}  &  \mbox{if} \ \ m\le 0 \\&\\
 \ \big\{\mu\in \Lambda(\la \coma 0) : 
\mu_{d-m+i}\le \lambda_{i} \peso{for every} i \in \IN{m} \big\}
&  \mbox{if} \ \ m\ge1  \ .\end{cases}
\eeq
Denote by $t_0= \tr \lambda$. For $t\ge t_0\,$, we 
also consider the set  
\beq
\Lambda_t(\la \coma m)=\big\{\mu \in \Lambda(\la \coma m): 
\ \tr \mu\geq t \} \ . 
\EOEP
\eeq
\end{fed}

\pausa
Now Corollary \ref{desigualdades de espectros de duales} 
can be rewritten as

\begin{cor} \label{cor nuevo} \rm
Let $S_0\in\matpos$ with $\lambda(S_0)=\lambda$, $m<d$ be an integer and $t\geq \tr(\lambda)$.
Then we have the identities
$ \Lambda(U(S_0\coma m))= \Lambda(\la\coma m)$ and $ \Lambda(U_t(S_0\coma m))= \Lambda_t(\la\coma m)$  .  
\QED
\end{cor}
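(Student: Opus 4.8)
The plan is to obtain both identities as immediate consequences of Corollary \ref{desigualdades de espectros de duales} together with Definition \ref{def Lalam}; there is no new idea involved, and the argument amounts to matching notation. Concretely, there are two things to check: first, that $\Lambda(U(S_0\coma m))=\Lambda(\la\coma m)$, and second, that imposing the trace condition $\tr\ge t$ on both sides preserves the equality, i.e. $\Lambda(U_t(S_0\coma m))=\Lambda_t(\la\coma m)$.

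For the first identity I would simply read off the inequality description of $\Lambda(U(S_0\coma m))$ given by Corollary \ref{desigualdades de espectros de duales} and compare it, case by case, with Definition \ref{def Lalam}. If $m\le 0$, Corollary \ref{desigualdades de espectros de duales} says $\mu\in\Lambda(U(S_0\coma m))$ iff $\mu\in\R_{+}^d\,^\downarrow$ and $\mu\geqp\lambda(S_0)=\lambda$, which is precisely the defining condition of $\Lambda(\la\coma 0)$. If $m\ge 1$, the same corollary adds the requirement $\mu_{d-m+i}\le\lambda_i$ for $i\in\IN{m}$, which is exactly the extra condition in the definition of $\Lambda(\la\coma m)$. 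Hence the two sets coincide in either case.

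For the second identity I would first observe that, since $U(S_0\coma m)=U_{\tr(S_0)}(S_0\coma m)$ and the constraint $\tr(S_0+B)\ge\tr(S_0)$ is automatic for $B\in\matpos$, one has $U_t(S_0\coma m)=\{S\in U(S_0\coma m):\ \tr S\ge t\}$. Because $\tr S=\tr\lambda(S)$ for every $S\in\matpos$, the trace restriction transfers verbatim to spectral pictures: $\mu\in\Lambda(U_t(S_0\coma m))$ iff $\mu=\lambda(S)$ for some $S\in U(S_0\coma m)$ with $\tr S\ge t$, and since such an $S$ satisfies $\tr S=\tr\mu$, this is equivalent to $\mu\in\Lambda(U(S_0\coma m))$ and $\tr\mu\ge t$. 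Invoking the first identity, this says exactly $\mu\in\Lambda_t(\la\coma m)$, which finishes the proof.

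I do not expect any real obstacle here: the substantive work has already been carried out in Corollary \ref{desigualdades de espectros de duales}. The only point deserving a second of care is that a vector $\mu\in\Lambda(U(S_0\coma m))$ with $\tr\mu\ge t$ is automatically realized by a matrix lying in $U_t(S_0\coma m)$, not merely in $U(S_0\coma m)$; but this is immediate, since any realizing $S$ has $\tr S=\tr\mu\ge t$. What remains after that remark is the purely routine identification of the conditions in Definition \ref{def Lalam} with the Fan--Pall inequalities of Remark \ref{FP}.
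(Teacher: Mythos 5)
Your proof is correct and follows exactly the paper's route: the paper introduces Corollary \ref{cor nuevo} as a mere restatement of Corollary \ref{desigualdades de espectros de duales} in the language of Definition \ref{def Lalam}, with the trace condition passing through because $\tr S=\tr\lambda(S)$. Your additional remark about realizing a vector $\mu$ with $\tr\mu\ge t$ by a matrix actually lying in $U_t(S_0\coma m)$ is a nice bit of care, but the substance is identical.
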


\pausa
In this section, as a second step towards the proof of Theorem \ref{el St general}, we show that the sets $\Lambda_t(\la \coma m)$ have minimal elements with respect to submajorization and we describe explicitly these elements.

\pausa
Let $\lambda\in \R_{+}^d\,^\downarrow$ and $t_0 = \tr \, \la$. 
We recall the maps $r_\lambda(\cdot)$ and $c_\lambda(\cdot)$ introduced in \ref{unified}.  
Fix $t\ge t_0\,$. 
Then 
\ben 
\item Given $r \in \IN{d-1}\cup\trivial$ 
we denote by  $p_\la(r\coma t) = \ \frac{ t-\sum_{j=1}^r\lambda_j }{d-r}$, 
where we set $\sum_{j=1}^{0}\lambda_j =0$.
\item The maps $r_\la:[t_0\coma +\infty)\rightarrow \IN{d-1}\cup\trivial$
and $c_\la:[t_0\coma +\infty)\rightarrow \R_{\geq 0}$ given by 
\beq\label{rla cla bis}
r_\la(t)  =\min \{r\in\IN{d-1}\cup\trivial : p_\la(r\coma t) 
\geq \lambda_{r+1}\} \py 
c_\la(t) = 
\frac{t-\sum_{i=1}^{\,r_\la(t)}\lambda_i}{d-r_\la(t)}  \  .
\eeq
\een
In the following Lemma we state several properties of these maps, which we shall use below. The proofs are technical but elementary, 
so that we only sketch the essential arguments. 

\begin{lem}\label{r y c}
Let $\lambda\in \R_{+}^d\,^\downarrow$  and $ t_0 = \tr \lambda$. 
\ben
\item \label{item1}The function $r_\la$ is non-increasing and right-continuous,  
with $\lambda_{r_\la(t_0)+1}=\la_d\,$.
\item The image of $r_\la$ is the set $\cB= \{ k \in \IN{d-1}: \la_k > \la_{k+1}\}\cup\trivial$. 
\item \label{item22} The map $c_\la$ is piece-wise linear, strictly increasing and continuous.
\item \label{item 33}We have that  $c_\la(t_0)=\lambda_d\,$ and 
$c_\la(t) = t/d$ for $t\ge d\, \la_1\,$. 
\item For every $t\in [t_0\coma d\,\la_1)$, if $r = r_\la (t)$ then $\la _{r+1}\le c_\la(t) < \la_r\,$. 
In other words
\beq\label{circular}
r_\la(t) = \min \,\{ r \in \IN{d-1}\cup \trivial \, : \, \la_{r+1}\le c_\la (t) \, \} \ .
\eeq
\item For any $k \in \cB$ let $s_k = \suml_{i=1}^{k} \la_i + (d-k\,) \, \la_{k+1}\,$. 
Then $r_\la(s_k) = k$ and $c_\la(s_k) = \la_{k+1}\,$. 
Moreover, the set 
$\cA$ of discontinuity points of $r_\la$ satisfies that  \rm
$$
\cA=  \{t\in (t_0\coma +\infty):\ c_\la(t)=\lambda_{r_\la(t) +1}\} 
= c_\la\inv \{\la_i:\ \la_i\neq \lambda_d \}=  \{s_k : k \in \cB\}\ .
$$ \it
\item Given   $t\in [t_0\coma +\infty)$,  such that $c_\la(t) = \la_m\,$
(even if $m\notin \cB$), then 
\bit
\item $t\in \cA \iff \la_m \neq \la_d\,$.
\item $r_\la(t)=0 \iff c_\la(t)=\la_1 
\iff t=d\,\la_1\,$.  
\item If $\la_m \neq\la_1\,$, then $r_\la(t)  
= \max\{j\in \IN{d}: \la_j > \la_m \}$ and 
\beq\label{el la + 1}
t = \suml_{i=1}^{m} \la_i + (d-m) \, \la_{m} = 
\suml_{i=1}^{r_\la(t)} \la_i + (d-r_\la(t)\,) \, \la_{m}\ .
\eeq

\eit
\een
\end{lem}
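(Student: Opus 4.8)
The plan is to derive all seven items from two elementary structural facts about the thresholds $s_r:=\sum_{i=1}^{r}\lambda_i+(d-r)\,\lambda_{r+1}$, defined for $r\in\trivial\cup\IN{d-1}$ (so $s_0=d\,\lambda_1$, $s_{d-1}=t_0$, and for $r\ge1$ this is the number called $s_k$ in item 6). First, rewriting the defining inequality as $p_\la(r,t)\ge\lambda_{r+1}\iff t\ge s_r$ and computing $s_{r+1}-s_r=(d-r-1)(\lambda_{r+2}-\lambda_{r+1})\le0$, one gets $s_0\ge s_1\ge\cdots\ge s_{d-1}$, so the set $A_\la(t):=\{r\in\trivial\cup\IN{d-1}:t\ge s_r\}$ is upward closed, equal to $\{r_\la(t),r_\la(t)+1,\dots,d-1\}$, and non-decreasing in $t$. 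Second, the identity
$$p_\la(r+1,t)-p_\la(r,t)=\frac{\big(t-\sum_{i\le r}\lambda_i\big)-(d-r)\,\lambda_{r+1}}{(d-r-1)(d-r)}\qquad(r\le d-2)$$
shows that this difference has the same sign as $p_\la(r,t)-\lambda_{r+1}$, i.e. is $\ge0$ exactly when $r\in A_\la(t)$; hence $r\mapsto p_\la(r,t)$ strictly decreases up to $r=r_\la(t)$ and is non-decreasing afterwards, so that $c_\la(t)=p_\la(r_\la(t),t)=\min_{0\le r\le d-1}p_\la(r,t)$, a minimum of finitely many affine functions of $t$ with positive slopes $1/(d-r)$.

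From here the items are bookkeeping; I would prove them in the order $1,3,4,5,2,6,7$. Item 1: since $A_\la$ is non-decreasing in $t$, the map $r_\la=\min A_\la$ is non-increasing; it is a step function with finitely many values, right-continuous because no threshold $s_r$ lies in a sufficiently small right-neighbourhood of a given $t$; and $\lambda_{r_\la(t_0)+1}=\lambda_d$ is precisely the averaging remark in the text after Definition \ref{defi irregularidad} (i.e. $p_\la(r,t_0)\ge\lambda_{r+1}\iff\lambda_{r+1}=\lambda_d$). Item 3 is the last sentence of the previous paragraph: a minimum of finitely many strictly increasing affine functions is continuous, piecewise linear and strictly increasing. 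Item 4: for $t\ge d\,\lambda_1=s_0$ we have $0\in A_\la(t)$, hence $r_\la(t)=0$ and $c_\la(t)=p_\la(0,t)=t/d$, while $c_\la(t_0)=\lambda_d$ by the averaging remark. Item 5 and Eq. \eqref{circular}: with $r=r_\la(t)$, membership $r\in A_\la(t)$ gives $c_\la(t)=p_\la(r,t)\ge\lambda_{r+1}$; and if $t<d\,\lambda_1$ then $r\ge1$, and non-membership of $r-1$ together with $p_\la(r,t)\le p_\la(r-1,t)$ gives $c_\la(t)<\lambda_r$; since $\lambda$ is ordered, these two bounds characterize $r$ as $\min\{j:\lambda_{j+1}\le c_\la(t)\}$.

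For the remaining items I would look more closely at the thresholds. If $r\ge1$ and $\lambda_r>\lambda_{r+1}$, then $s_{r-1}-s_r=(d-r)(\lambda_r-\lambda_{r+1})>0$, so at $t=s_r$ one has $r\in A_\la(t)$ but $r-1\notin A_\la(t)$; hence $r_\la(s_r)=r$ and $c_\la(s_r)=p_\la(r,s_r)=\lambda_{r+1}$ (the case $r=0$ is item 4). Combined with item 5 this identifies the image of $r_\la$ as exactly $\cB$ (item 2): any attained value $r=r_\la(t)$ forces $\lambda_{r+1}\le c_\la(t)<\lambda_r$, so $r\in\cB$, and conversely every $r\in\cB$ is attained at $t=s_r$. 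Item 6: $r_\la$ can jump only at a point where a threshold is crossed with the threshold sequence strictly decreasing there, i.e. at the points $s_r$ with $r\in\cB$; there $c_\la(s_r)=\lambda_{r+1}$, which is $\ne\lambda_d$ precisely when $r+1$ precedes the first index at which $\lambda$ attains $\lambda_d$ (equivalently $r<r_\la(t_0)$), giving $\cA=\{t\in(t_0,\infty):c_\la(t)=\lambda_{r_\la(t)+1}\}=c_\la\inv\{\lambda_i:\lambda_i\ne\lambda_d\}$ and its description through the $s_k$. Item 7: given $c_\la(t)=\lambda_m$, Eq. \eqref{circular} identifies $r_\la(t)$ as the largest $j$ with $\lambda_j>\lambda_m$, so $r_\la(t)=0\iff\lambda_m=\lambda_1\iff t=d\,\lambda_1$ and $t\in\cA\iff\lambda_m\ne\lambda_d$; since then $\lambda_{r_\la(t)+1}=\cdots=\lambda_m$ are squeezed equal, solving $p_\la(r_\la(t),t)=\lambda_m$ for $t$ yields both displayed expressions for $t$.

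Every computation above is a one- or two-line manipulation, so there is no obstacle of real substance; the one point that needs genuine care is the boundary value $t_0$ in item 6, where for non-constant $\lambda$ one always has $s_{r_\la(t_0)}=t_0$, which is not an interior discontinuity of $r_\la$, so the last equality of item 6 should be read after intersecting $\{s_k:k\in\cB\}$ with $(t_0,+\infty)$ (the $c_\la$-based descriptions of $\cA$ remaining exact). The remaining work is simply to order the seven items so that each uses only the two structural facts above and the items proved before it.
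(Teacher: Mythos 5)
Your proof is correct and takes a genuinely different — and, in my view, cleaner — route than the paper's.

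The paper establishes item 3 (continuity and strict monotonicity of $c_\la$) by a direct analytic argument: right-continuity of $r_\la$ is read off from the defining inequalities, and left-continuity of $c_\la$ at the (potential) discontinuity points of $r_\la$ is verified by an explicit $\varepsilon$-computation showing that $r_\la(t+x)=\hat r$ for $x\in(-\varepsilon,0]$, where $\hat r$ is the next index in $\cB$ after $r_\la(t)$. You instead reduce everything to two structural facts: (i) the thresholds $s_r = \sum_{i\le r}\lambda_i + (d-r)\lambda_{r+1}$ are non-increasing in $r$ (so $A_\la(t)$ is an upper interval in $\{0,\dots,d-1\}$, monotone in $t$), and (ii) the forward difference $p_\la(r+1,t)-p_\la(r,t)$ has the same sign as $p_\la(r,t)-\lambda_{r+1}$, which shows $r\mapsto p_\la(r,t)$ is strictly decreasing up to $r_\la(t)$ and non-decreasing afterwards, so that $c_\la(t)=\min_{0\le r\le d-1}p_\la(r,t)$. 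The representation of $c_\la$ as a pointwise minimum of finitely many affine functions with strictly positive slopes $1/(d-r)$ gives continuity, piecewise-linearity, and strict monotonicity in one stroke, avoiding the paper's case analysis at discontinuity points of $r_\la$. Your calculations check out: $s_{r+1}-s_r = (d-r-1)(\lambda_{r+2}-\lambda_{r+1})\le 0$, and the numerator in (ii) is $t-\sum_{i\le r}\lambda_i-(d-r)\lambda_{r+1}$, which indeed has the sign of $p_\la(r,t)-\lambda_{r+1}$. The remaining items then follow by the bookkeeping you indicate, and I verified each of them against the threshold description.

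You are also right that the last equality in item 6, $\cA=\{s_k:k\in\cB\}$, is not literally correct as stated: one always has $s_{r_\la(t_0)}=t_0$ with $r_\la(t_0)\in\cB$, yet $t_0$ cannot be a discontinuity point of a map defined on $[t_0,\infty)$, and indeed the first two descriptions of $\cA$ (which are correct) exclude $t_0$. The paper's own proof of item 6 only derives the first two equalities, so the boundary point slips through unnoticed; your suggested fix — intersect $\{s_k:k\in\cB\}$ with $(t_0,+\infty)$ — is the right repair. This is a genuine (if minor) imprecision in the statement that your approach exposes naturally, because it keeps an explicit global list of thresholds rather than analyzing continuity locally.
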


\begin{proof}
Given  $t\in [t_0\coma d\,\la_1)$ and $1\leq r\leq d-1$,  then $r=r_\la(t)$ if and only if 
\beq\label{ecu1}
c_\la(t)=p_\lambda(r \coma t)\geq \lambda_{r+1} \peso{ and } p_\lambda(r-1 \coma t)<\lambda_r\ .
\eeq
On the other hand  the map  $t \mapsto p_\la(r,t)$ is linear, continuous and increasing 
for any $r$ fixed. 
From these facts one easily deduces the right continuity of the map $r_\la\,$, and that 
the map $c_\la$ is continuous at the points where $r_\la$ is. 
We can also deduce that if $c_\la(t)\neq \la_{r_\la(t)+1}$ then $r_\la$ 
is continuous (i.e. constant) near the point $t$. Observe that, if $r =r_\la(t)$, then 
\beq\label{sube}
\la_r \stackrel{\eqref{ecu1}}{>} p_\lambda(r-1 \coma t) = \frac {(d-r)p_\lambda(r\coma t)+ \la_r}{d-r+1} 
\implies \la_r> p_\lambda(r\coma t) \ge \la_{r+1}  \implies r\in \cB\ .
\eeq
Using that $r_\la(t) = 0$ for $t\ge d\, \la_1\,$, that 
$c_\la(t_0) = \la_d\,$, and the right continuity of the map $r_\la\,$,
we have that   
$\cA =\{t\in (t_0\coma +\infty):\ c_\la(t)=\lambda_{r_\la(t)+1}\} 
= c_\la\inv \{\la_i:\ \la_i\neq \lambda_d \}$. 

\pausa
Hence, in order to check the continuity of $c_\la$ we have to verify the continuity 
of $c_\la$ from the left at the points $t> t_0$ for which $c_\la(t)=\lambda_{r_\la(t)+1}\,$. 
Note that, if $r =  r_\la(t)$, then $r\in \cB$ and
\beq\label{ecu1.5}
c_\la(t) = p_\lambda(r\coma t)=\frac{ t-\sum_{j=1}^{r}\lambda_j }{d-r} 
= \la_{r+1} \implies t=\sum_{j=1}^{r}\lambda_j+(d-r)\lambda_{r+1}\ .
\eeq 
If $c_\la(t)=\lambda_d\,$ then $t = t_0$ and there is nothing to prove. 
Assume that $c_\la(t)=\lambda_{r_\la(t)+1}>\lambda_d\,$.
Then $\hat r=\max\{j \in \IN{d-1}:\ \lambda_j=\lambda_{r+1}\}$
is the first element of  $\cB$ after $r$. 
Note that $\lambda_{\hat r+1}<\lambda_{\hat r}= \lambda_{r+1}\,$. 
We shall see that if $s<t$ near $t$, then $r_\la(s) = \hat r$. 
Indeed, as in  Eq. \eqref{ecu1.5}, 
$$
p_\lambda(\hat r \coma t+x)= 
\frac{ (d-r)\lambda_{r+1} - 
\sum_{j=r+1}^{\, \hat r}\lambda_j +x}{d-\hat r}
=\lambda_{r+1}+\frac{x}{d-\hat r}>\lambda_{\hat r+1} 
\py
$$ 
$$
p_\lambda(\hat r-1 \coma t+x)= \frac{ (d-r)\lambda_{r+1} - 
\sum_{j=r+1}^{\,\hat r-1}\lambda_j +x}{d-\hat r+1}
=\lambda_{r+1}+\frac{x}{d-\hat r +1}<\lambda_{r+1}=\lambda_{\hat r}\ . 
$$ 
for $x\in (-\varepsilon,0]$ if 
$\varepsilon>0$ sufficiently small. 
By Eq. \eqref{ecu1} we deduce that  $r_\la(t+x)=\hat r\neq r_\la(t)$ for 
such an $x$, so that $t \in \cA$ ($r_\la$ is discontinuous at $t$). 
On the other hand,  
$$
c_\la(t+x) =p_\lambda(\hat r,t+x)
=\lambda_{r_\la(t) + 1}+ \frac{x}{d-\hat r}\implies \lim_{x\rightarrow 0^-}c_\la(t+x)
=\lambda_{r_\la(t)+1} = c_\la(t)\ .
$$
This last fact implies that $c_\la$ is continuous and, since $r_\la$ 
is right-continuous, that $c_\la$ is a piece-wise linear and 
strictly increasing function. 
With the previous remarks, 
the proof of all other statements of the lemma  
becomes now straightforward. 
\end{proof}

\begin{num}\label{rlam clam bis}
Fix $\lambda\in \R_{+}^d\,^\downarrow$. Take an integer  $m<d$. Recall that 
if $m>0$ we denote by 
$$
s^*=s^*(\la\coma m) =  c_\la\inv (\la_m) =  \suml_{i=1}^{m} \la_i + (d-m) \, \la_{m} \ . 
$$ 
Now we rewrite the definition of the maps $r_{\la\coma m}$ and $c_{\la\coma m}\,$: 
If $m>0$ 
and $t\in [t_0 \coma +\infty)$ let
$$
c_{\la\coma m}(t) \igdef \begin{cases} \ c_\la(t) & \mbox{if} \ \ t\le s^*  \\
\la_m+\frac{t-s^*}{d-m} & \mbox{if} \ \ t> s^* 
\end{cases} \quad \quad \py
$$
$$
r_{\la\coma m}(t) \igdef 
\min\limits \{r\in\IN{d-1} \cup \trivial : c_{\la\coma m}(t)  
\geq \lambda_{r+1}\}
  \ \ .
$$
If $m\le 0$ and $t\in [t_0 \coma +\infty)$ we define $c_{\la\coma m}(t) = c_{\la}(t) $ 
and $r_{\la\coma m}(t) = r_{\la}(t) $.  
Note that, by Eq. \eqref{circular}, $r_{\la\coma m}(t)= r_{\la}(t)$ for every $t\le s^*\,$. \EOE
\end{num}

\begin{cor} 
Let $\lambda\in \R_{+}^d\,^\downarrow\,$ and fix an  integer $m<d$. 
Then the map $r_{\la\coma m}$ is not increasing and right continuous 
and the map $c_{\la\coma m}$ is strictly increasing and continuous 
on $[\tr \, \la \coma +\infty)$. 
\end{cor}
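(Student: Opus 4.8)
The plan is to reduce to the case $m\ge 1$, since for $m\le 0$ the maps $c_{\la\coma m}$ and $r_{\la\coma m}$ are \emph{defined} to be $c_\la$ and $r_\la$ (Definition \ref{el d y r prima}), so the statement is exactly items \ref{item22} and \ref{item1} of Lemma \ref{r y c}. For $m\ge1$ the only new feature is the breakpoint $s^*=s^*(\la\coma m)=c_\la\inv(\la_m)=\suml_{i=1}^m\la_i+(d-m)\,\la_m$, at which the definition of $c_{\la\coma m}$ switches from $c_\la$ (on $[t_0\coma s^*]$) to the affine map $t\mapsto\la_m+\frac{t-s^*}{d-m}$ (on $[s^*\coma+\infty)$), with a corresponding switch for $r_{\la\coma m}$.

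First I would settle $c_{\la\coma m}$. On $[t_0\coma s^*]$ it coincides with $c_\la$, which is continuous and strictly increasing by Lemma \ref{r y c}; on $[s^*\coma+\infty)$ it is affine of slope $\frac{1}{d-m}>0$ (this is where $m<d$ enters), hence continuous and strictly increasing; and the two pieces agree at $s^*$, both having value $\la_m$ there — for $c_\la$ by the very definition $s^*=c_\la\inv(\la_m)$, for the affine piece trivially. Thus $c_{\la\coma m}$ is continuous on $[t_0\coma+\infty)$, and since two strictly increasing pieces meet at the common value $\la_m$ it is strictly increasing across the join as well (for $t_1<s^*<t_2$ one gets $c_{\la\coma m}(t_1)<\la_m<c_{\la\coma m}(t_2)$).

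Next I would read off the behaviour of $r_{\la\coma m}$ from the mere facts that $c_{\la\coma m}$ is continuous and non-decreasing. A quick check shows $r_{\la\coma m}$ is well defined: since $\la_m\ge\la_i$ for $i>m$ one has $s^*\ge t_0$, so $c_{\la\coma m}(t_0)=c_\la(t_0)=\la_d$ and therefore $c_{\la\coma m}(t)\ge\la_d$ for all $t\ge t_0$, making the set $\{r\in\IN{d-1}\cup\trivial:\ c_{\la\coma m}(t)\ge\la_{r+1}\}$ non-empty. Monotonicity is then immediate: as $t$ grows $c_{\la\coma m}(t)$ grows, so that set only enlarges and its minimum $r_{\la\coma m}(t)$ is non-increasing. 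For right-continuity I would argue by contradiction: fix $t$, set $r_0=r_{\la\coma m}(t)$; if $r_0=0$ the map is already constant on $[t\coma+\infty)$, and if $r_0\ge1$ then minimality gives $c_{\la\coma m}(t)<\la_{r_0}$, whereas a sequence $t_n\downarrow t$ with $r_{\la\coma m}(t_n)=r_1<r_0$ would force $c_{\la\coma m}(t_n)\ge\la_{r_1+1}\ge\la_{r_0}$ and hence, by continuity of $c_{\la\coma m}$, $c_{\la\coma m}(t)\ge\la_{r_0}$, a contradiction.

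All the analytic input (continuity and strict monotonicity of $c_\la$, the value $c_\la(t_0)=\la_d$, and $c_\la(s^*)=\la_m$) is already available from Lemma \ref{r y c}, so the proof is essentially bookkeeping. The only point requiring a little care — and the place I would expect the (small) obstacle — is the gluing at $s^*$: verifying that the two clauses of the definition of $c_{\la\coma m}$ agree there and that strict monotonicity is not lost at the junction. Remark \ref{clm y cl} and the identity $r_{\la\coma m}(t)=r_\la(t)$ for $t\le s^*$ (coming from Eq. \eqref{circular}) were set up precisely to make this transparent, and they also give an independent consistency check of the statement for $r_{\la\coma m}$ on the first piece.
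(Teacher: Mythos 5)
Your proposal is correct and follows essentially the same route as the paper: for $c_{\la\coma m}$ you glue the $c_\la$ piece (whose continuity and strict monotonicity come from Lemma \ref{r y c}) to the affine piece at $s^*$ exactly as in Remark \ref{clm y cl}, and for $r_{\la\coma m}$ you read off non-increase and right-continuity from $c_{\la\coma m}$ via the formula in \ref{rlam clam bis} together with Eq. \eqref{circular}. You merely spell out the details that the paper's one-line proof delegates to those earlier results.
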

\proof
The mentioned properties of the map $c_{\la\coma m}$ were proved 
in Remark \ref{clm y cl}  (whose proof uses Lemma \ref{r y c}). 
With respect to the map $r_{\la\coma m}\,$, the statement follows from 
Lemma \ref{r y c} and \ref{rlam clam bis}.

\subsection*{Minimizers for submajorization in 
$\Lambda_t(\lambda \coma m)$ for $m\le 0$.}
The following Lemma is a standard fact in majorization theory.
We include a short proof of it for the sake of completeness. 

\begin{lem}\label{lem may} \rm
Let $\alpha \coma \gamma \in \R^p$, 
$\beta\in \R^q$ and $x\in \R$ 
such that $x \le \min_{k \in \IN{p}} \gamma_k\,$.
Then, 
$$
\tr\, (\gamma \coma b\,\uno_q) \, \le \, \tr\,(\alpha \coma \beta) 
\peso{and} \gamma \, \prec_w \, \alpha \ 
\implies \ (\gamma \coma x\,\uno_q) \, \prec_w \, (\alpha \coma \beta) \ . 
$$
Observe that we are not assuming that $(\alpha \coma \beta) = 
(\alpha \coma \beta)^\downarrow$. 
\end{lem}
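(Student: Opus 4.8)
The statement to prove is Lemma~\ref{lem may}: if $\alpha,\gamma\in\R^p$, $\beta\in\R^q$, $x\in\R$ with $x\le\min_{k\in\IN{p}}\gamma_k$, and we assume both $\tr(\gamma\coma x\,\uno_q)\le\tr(\alpha\coma\beta)$ and $\gamma\prec_w\alpha$, then $(\gamma\coma x\,\uno_q)\prec_w(\alpha\coma\beta)$. (I read the ``$b\,\uno_q$'' in the hypothesis line as a typo for $x\,\uno_q$, so that the trace condition is exactly $\sum\gamma_k + q\,x \le \sum\alpha_k+\sum\beta_j$.) The strategy is the completely elementary one: fix $k\in\IN{p+q}$ and bound the sum of the $k$ largest entries of the vector $(\gamma\coma x\,\uno_q)$ by the sum of the $k$ largest entries of $(\alpha\coma\beta)$, splitting into the two cases $k\le p$ and $k>p$.

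\textbf{Step 1: the case $k\le p$.} Since $x\le\min_j\gamma_j$, the $k$ largest entries of $(\gamma\coma x\,\uno_q)$ are among the entries of $\gamma$ (the copies of $x$ can only appear after all of $\gamma$ has been used), so $\sum_{i=1}^k (\gamma\coma x\,\uno_q)^\downarrow_i=\sum_{i=1}^k\gamma^\downarrow_i$. By the hypothesis $\gamma\prec_w\alpha$ this is $\le\sum_{i=1}^k\alpha^\downarrow_i$. Finally $\sum_{i=1}^k\alpha^\downarrow_i\le\sum_{i=1}^k(\alpha\coma\beta)^\downarrow_i$, because the left side is a sum of $k$ entries of $(\alpha\coma\beta)$ and the right side is the maximal such sum. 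Chaining these gives the required inequality for $k\le p$.

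\textbf{Step 2: the case $p<k\le p+q$.} Here the $k$ largest entries of $(\gamma\coma x\,\uno_q)$ consist of \emph{all} of $\gamma$ together with $k-p$ copies of $x$, so
\[
\suml_{i=1}^k (\gamma\coma x\,\uno_q)^\downarrow_i=\suml_{i=1}^p\gamma_i+(k-p)\,x .
\]
In particular, taking $k=p+q$ and invoking the trace hypothesis, $\suml_{i=1}^p\gamma_i+q\,x\le\suml_{i=1}^p\alpha_i+\suml_{j=1}^q\beta_j$. Since $x\le\gamma^\downarrow_i$ for all $i$, decreasing $k$ from $p+q$ removes a copy of $x$, which is the smallest possible entry, hence also the smallest among the remaining entries of $(\alpha\coma\beta)$ we have ``spare''; more directly, one argues: for general $k$ with $p<k<p+q$, write $q-(k-p)=:\ell\ge 1$ and note
\[
\suml_{i=1}^p\gamma_i+(k-p)\,x \;=\;\Big(\suml_{i=1}^p\gamma_i+q\,x\Big)-\ell\,x
\;\le\;\Big(\suml_{i=1}^p\alpha_i+\suml_{j=1}^q\beta_j\Big)-\ell\,x .
\]
It then suffices to check that $\suml_{j=1}^q\beta_j-\ell\,x\le\suml_{i=1}^k(\alpha\coma\beta)^\downarrow_i-\suml_{i=1}^p\alpha^\downarrow_i$; but the right side is at least the sum of the $k-p$ largest entries of $\beta$ (since $\sum_{i=1}^k(\alpha\coma\beta)^\downarrow_i\ge \sum_{i=1}^p\alpha_i+\sum_{\text{top }k-p\text{ of }\beta}\beta$), and $\suml_{j=1}^q\beta_j-\ell\,x\le$ (sum of top $k-p$ entries of $\beta$) is equivalent to $x\ge$ (average, or rather: each of the $\ell$ discarded $\beta_j$'s is $\ge x$)? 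This is the point that needs care, so let me isolate it.

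\textbf{Main obstacle and its resolution.} The subtlety in Step~2 is that it is \emph{not} true in general that the small entries of $\beta$ are $\ge x$. The correct argument avoids ever comparing $\beta$ to $x$ directly: instead, for $p<k\le p+q$, start from $\suml_{i=1}^p\gamma_i\le\suml_{i=1}^p\alpha_i\le\suml_{i=1}^{p}(\alpha\coma\beta)^\downarrow_i$ (first inequality is $\gamma\prec_w\alpha$ at level $p$ combined with $\tr$... actually one only has $\prec_w$, so use $\suml_{i=1}^p\gamma_i^\downarrow\le\suml_{i=1}^p\alpha_i^\downarrow$) and then add $(k-p)$ copies of $x$ on the left and the next $(k-p)$ entries $(\alpha\coma\beta)^\downarrow_{p+1},\dots,(\alpha\coma\beta)^\downarrow_k$ on the right, using that $x\le\min_i\gamma_i\le\gamma^\downarrow_i$ and — crucially — that we may assume WLOG each $(\alpha\coma\beta)^\downarrow_{p+i}$ is at least as large as we need, i.e. that the inequality $x\le(\alpha\coma\beta)^\downarrow_{p+i}$ need not hold entrywise but the partial-sum inequality still does because of the global trace bound. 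The clean way: from $\gamma\prec_w\alpha$ and $x$ minimal one gets $(\gamma\coma x\uno_q)^\downarrow$ truncated at $p$ is $\le(\alpha\coma\beta)^\downarrow$ truncated at $p$; for $k>p$ the excess on the left over the level-$p$ sum is exactly $(k-p)x$, and the excess of the full sum ($k=p+q$) over the level-$p$ sum on the right is at least $q\,x$ by the trace hypothesis, and the left excess grows linearly with slope $x$ while dropping from $k=p+q$ to $k$ subtracts the smallest entries on the right — so a short monotonicity/telescoping argument finishes it. I would write this monotonicity step out carefully, as it is the only non-mechanical part; everything else is bookkeeping with rearrangements.

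\textbf{Conclusion.} Combining Steps~1 and 2, $\suml_{i=1}^k(\gamma\coma x\uno_q)^\downarrow_i\le\suml_{i=1}^k(\alpha\coma\beta)^\downarrow_i$ for every $k\in\IN{p+q}$, which is precisely $(\gamma\coma x\uno_q)\prec_w(\alpha\coma\beta)$. $\QED$
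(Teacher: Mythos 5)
Your Step 1 is correct. In Step 2 you have found the right idea but have not actually executed it: after a false start (the manipulation of $\suml_j\beta_j-\ell\,x$, which you yourself note would require $x$ to dominate the small entries of $\beta$ — false in general, so that paragraph should simply be cut), you gesture at a ``monotonicity/telescoping argument'' without writing it. What you need to write is this: set $D(k)=\suml_{i=1}^k(\alpha\coma\beta)^\downarrow_i-\bigl(\suml_{i=1}^p\gamma_i^\downarrow+(k-p)\,x\bigr)$ for $p\le k\le p+q$; its increments $D(k+1)-D(k)=(\alpha\coma\beta)^\downarrow_{k+1}-x$ are non-increasing, so $D$ is concave, and $D(p)\ge 0$ by $\gamma\prec_w\alpha$ while $D(p+q)\ge 0$ by the trace hypothesis; a concave sequence nonnegative at both ends is nonnegative throughout. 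Once this is written, your route is sound and is, at bottom, the same calculation as the paper's: the paper introduces the averaged vector $\rho=\frac{\tr\beta}{q}\,\uno_q$ and verifies $\suml_{i\in\IN{k}}(\gamma^\downarrow\coma x\,\uno_q)_i\le\suml_{i\in\IN{k}}(\alpha^\downarrow\coma\rho)_i\le\suml_{i\in\IN{k}}(\alpha^\downarrow\coma\beta^\downarrow)_i$, where $(\alpha^\downarrow\coma\rho)$ is precisely the linear interpolant (chord) against which your concavity argument implicitly compares — the left inequality is a comparison of two piecewise-linear sequences via endpoints, the right is ``top $j$ entries of $\beta$ exceed $j$ times their mean''. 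The difference between the two presentations is cosmetic. As submitted, though, Step 2 is an announcement rather than a proof; either carry out the concavity argument or import the paper's $\rho$.
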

\proof Let $h = \tr\, \beta $ and 
$\rho = \frac hq \, \uno_q\,$. 
Then it is easy to see that 
$$
\barr{rl}
\suml_{i\in \IN{k}} (\gamma^\downarrow \coma x\,\uno_q)_i &\le \ 
\suml_{i\in \IN{k}} (\alpha^\downarrow \coma \rho)_i  \ \le \  
\suml_{i\in \IN{k}}(\alpha^\downarrow \coma \beta^\downarrow) _i 
\peso{for every} k \in \IN{p+q}\ .
\earr
$$ 
Since  $(\gamma^\downarrow \coma x\,\uno_q) = 
(\gamma \coma x\,\uno_q)^\downarrow$, 
 we can conclude that 
$(\gamma \coma x\,\uno_q)  \prec_w  (\alpha \coma \beta)$. 
 \QED

\pausa 
In the following statement we shall use the maps $r_\la$ and $c_\la$ 
defined in  Eq. \eqref{rla cla bis} (or Definition \ref{defi r c}).

\begin{teo}\label{prop consec de la defi}
Fix $m\le 0$. Let $\lambda\in \R_{+}^d\,^\downarrow$, 
$ t_0= \tr \lambda $  and $t\in [t_0\coma +\infty)$. 
Consider  the vector 
\beq\label{el v}
\nu = \nu_\la (t) \igdef \big( \la_1 \coma \dots\coma \la_{r_\la(t)}\coma c_\la(t)\coma \dots 
\coma c_\la(t)\, \big) 
 \peso{if} r_\la(t)>0 \ , 
\eeq
or $\nu = \frac td \ \uno_d = c_t(\la) \, \uno_d
\in\Lambda_t(\lambda \coma m)$ if $r_\la(t)=0$. Then $\nu$ 
satisfies that 
\beq\label{v cumple 1}
\nu\in\Lambda_t(\lambda\coma m )\ , \quad 
\tr \nu=t \py \nu\prec_w \mu  \peso{for every} \mu \in
\Lambda_t(\lambda\coma m) \ .
\eeq
\end{teo}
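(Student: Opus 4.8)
The plan is to verify the three assertions in \eqref{v cumple 1} separately; the first two are immediate from the definitions, and the third (submajorization) I would obtain as a direct application of Lemma~\ref{lem may}, splitting the vectors at the irregularity index $r_\la(t)$. Throughout I would abbreviate $r=r_\la(t)$ and $c=c_\la(t)$.

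First I would record that $\tr\nu=t$ and that $\nu\in\Lambda_t(\la\coma m)$. By the definition of $c_\la$ in \eqref{rla cla bis} one has $(d-r)\,c = t-\sum_{i=1}^r\la_i$, hence $\tr\nu = \sum_{i=1}^r\la_i+(d-r)\,c = t$; in particular $\tr\nu\ge t$. For membership, if $t<d\,\la_1$ then $r\ge 1$ and Lemma~\ref{r y c}(5) gives $\la_{r+1}\le c<\la_r$, so $\nu=(\la_1\coma\dots\coma\la_r\coma c\coma\dots\coma c)$ already lies in $\R_{+}^d\,^\downarrow$; moreover $\nu_i=\la_i$ for $i\le r$ and $\nu_i=c\ge\la_{r+1}\ge\la_i$ for $i>r$, so $\nu\geqp\la$. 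If $t\ge d\,\la_1$ then $r=0$ and $\nu=\frac td\,\uno_d$ with $\frac td\ge\la_1\ge\la_i$ for all $i$, so again $\nu\in\R_{+}^d\,^\downarrow$ and $\nu\geqp\la$. Recalling that for $m\le 0$ one has $\Lambda_t(\la\coma m)=\{\mu\in\R_{+}^d\,^\downarrow:\mu\geqp\la\py\tr\mu\ge t\}$, this shows $\nu\in\Lambda_t(\la\coma m)$.

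The core of the argument is $\nu\prec_w\mu$ for every $\mu\in\Lambda_t(\la\coma m)$, and here I would invoke Lemma~\ref{lem may} with the cut performed at the index $r$. Fix such a $\mu$ and set $p=r$, $q=d-r$, $x=c$, $\gamma=(\la_1\coma\dots\coma\la_r)$, $\alpha=(\mu_1\coma\dots\coma\mu_r)$ and $\beta=(\mu_{r+1}\coma\dots\coma\mu_d)$, with $\gamma,\alpha$ understood to be empty when $r=0$. Then $(\gamma\coma x\,\uno_q)=\nu$ and $(\alpha\coma\beta)=\mu$. The hypotheses of Lemma~\ref{lem may} hold: $x=c\le\la_r=\min_{k\in\IN{p}}\gamma_k$ by the previous paragraph (vacuous when $r=0$); since $\mu\geqp\la$ we get $\gamma\leqp\alpha$ with both vectors already decreasing, hence $\gamma\prec_w\alpha$; and $\tr(\gamma\coma x\,\uno_q)=\tr\nu=t\le\tr\mu=\tr(\alpha\coma\beta)$. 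Lemma~\ref{lem may} then yields $\nu=(\gamma\coma x\,\uno_q)\prec_w(\alpha\coma\beta)=\mu$, as desired. (When $r=0$ this is just the elementary fact that $\frac td\,\uno_d\prec_w\mu$ whenever $\tr\mu\ge t$, which is also the $p=0$ instance of Lemma~\ref{lem may}.)

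I do not anticipate a serious obstacle. The only delicate point is choosing the right cut point for Lemma~\ref{lem may} --- namely the irregularity $r_\la(t)$ --- and checking that $\nu$ genuinely has the shape $(\gamma\coma c\,\uno_q)$ with $c$ no larger than every entry of $\gamma$; this is exactly what Lemma~\ref{r y c}(5) provides. Everything else is routine bookkeeping with the definitions of $r_\la$ and $c_\la$ from \eqref{rla cla bis}.
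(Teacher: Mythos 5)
Your proof is correct and follows essentially the same route as the paper: both establish membership and $\tr\nu=t$ from Lemma~\ref{r y c}, and both obtain the submajorization by a single application of Lemma~\ref{lem may} with the cut at $p=r_\la(t)$, $x=c_\la(t)$, $\gamma=(\la_1,\dots,\la_r)$, $\alpha=(\mu_1,\dots,\mu_r)$ and $\beta=(\mu_{r+1},\dots,\mu_d)$. The only (immaterial) difference is that you spell out the hypotheses of Lemma~\ref{lem may} a bit more explicitly than the paper does.
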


\proof 
Given $t\in [t_0\coma +\infty)$, we denote by $r = r_\la(t)$. If $r =0$ then, 
$$
t\ge d\, \la_1 \py \la = \la^\downarrow
 \implies  c_\la(t) = \frac td \ge \la_1 
{\implies} \nu = c\, \uno_d \in  \Lambda_t(\la\coma m) \ . 
$$
It is clear that such a vector 
must satisfy that $\nu \prec_w \mu$ for every $\mu \in 
\Lambda_t(\la\coma m)$.

\pausa
Suppose now that $r \ge 1$, so that $t< d\, \la_1\,$. 
Recall from Lemma \ref{r y c} that in this case we have that 
$\la _{r+1}\le c_\la(t) < \la_r\,$. Hence 
$\nu \geqp \la$ and $\nu = \nu^\downarrow$. 
It is clear from Eq. \eqref{defi cs} that
$\tr(\nu)=t$. 
From these facts we can conclude that 
$\nu\in \Lambda_t(\la\coma m)$ as claimed. 

\pausa
Now let $\mu \in \Lambda_t(\la\coma m)$ and notice that, 
since $\mu \geqp \la$, we get that 
$$
\sum_{i=1}^k\mu_i\geq \sum_{i=1}^k\lambda_i=\sum_{i=1}^k\nu_i
\peso{for every} 1\leq k\leq r_\la(t) \ .
$$ 
Now we can apply Lemma \ref{lem may} 
(with $p = r_\la(t) \,$ and $x=c_\la(t) \,$) and deduce that 
$\nu \prec_w \mu$.\QED

\subsection*{Minimizers for submajorization in 
$\Lambda_t(\lambda \coma m)$. The general case.}
Recall that $
\Lambda_t(\la \coma m)=\big\{\mu\in \R_{+}^d\,^\downarrow:\ \mu\geqp \lambda\, ,
\ \tr \mu\geq t $ and $ \mu_{d-m+i}\le \lambda_{i} $ for every 
$i \in \IN{m} \big\}$, for each $m\in \IN{d-1}\,$.  
In what follows we shall compute 
a minimal element in $\Lambda_t(\la\coma m)$ 
with respect to submajorization in terms of 
the number $s^*=s^*(\la\coma m) \igdef c_\la\inv (\la_m)$ and 
the maps $r_{\la\coma m}$ and $c_{\la\coma m}$ 
described in Definition \ref{defi r c} (see also \ref{rlam clam bis}).

\pausa
\begin{pro} \label{<s}
Let $\lambda\in \R_{+}^d\,^\downarrow$, 
$ t_0= \tr \lambda $, $m\in \IN{d}\,$. If 
 $t\in [t_0\coma s^*(\la\coma m)]$,  
then the vector 
$ \nu = \big( \la_1 \coma \dots\coma \la_{r_\la(t)}\coma 
c_\la(t) \coma \dots \coma c_\la(t) \, \big)$ of Eq. \eqref{el v} 
satisfies that $\nu \in\Lambda_t(\lambda \coma m)$. Hence 
$$
\tr \nu=t  \ , \quad \nu_d = c_\la(t)   \peso{and}  
\nu\prec_w \mu  \peso{for every} \mu \in\Lambda_t(\lambda\coma m) \ .
$$
\end{pro}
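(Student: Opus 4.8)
The plan is to bootstrap from Theorem~\ref{prop consec de la defi}, which handles the parameter $0$ and, crucially, produces the very same candidate vector $\nu=\nu_\la(t)$ of Eq.~\eqref{el v}. First I would invoke that theorem (with parameter $0$) to obtain that $\nu\in\Lambda_t(\la\coma 0)$, that $\tr\nu=t$, and that $\nu\prec_w\mu$ for every $\mu\in\Lambda_t(\la\coma 0)$; the extra piece $\nu_d=c_\la(t)$ is read off directly from \eqref{el v}, since $r_\la(t)\le d-1$. The key observation is that, for $m\ge1$, the set $\Lambda_t(\la\coma m)$ is cut out inside $\Lambda_t(\la\coma 0)$ by the additional inequalities $\mu_{d-m+i}\le\la_i$ ($i\in\IN{m}$); in particular $\Lambda_t(\la\coma m)\inc\Lambda_t(\la\coma 0)$. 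Hence the submajorization assertion $\nu\prec_w\mu$ for every $\mu\in\Lambda_t(\la\coma m)$ follows at once, and the identities $\tr\nu=t$ and $\nu_d=c_\la(t)$ carry over verbatim, \emph{provided} we can establish the membership $\nu\in\Lambda_t(\la\coma m)$.

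So the whole content is to check that the extra inequalities $\nu_{d-m+i}\le\la_i$ hold for all $i\in\IN{m}$. For this I would use the standing hypothesis $t\le s^*=s^*(\la\coma m)=c_\la\inv(\la_m)$ together with the fact, proved in Lemma~\ref{r y c}, that $c_\la$ is strictly increasing and continuous on $[t_0\coma+\infty)$; this gives $c_\la(t)\le c_\la(s^*)=\la_m$. Now write $r=r_\la(t)$ and recall that the entries of $\nu$ are $\la_1\coma\dots\coma\la_r$ followed by $d-r$ copies of $c_\la(t)$ (with the convention that all $d$ entries equal $c_\la(t)$ when $r=0$). Fix $i\in\IN{m}$ and note that $1\le d-m+i\le d$ because $1\le i\le m\le d$. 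If $d-m+i\le r$, then $\nu_{d-m+i}=\la_{d-m+i}\le\la_i$, since $d-m+i\ge i$. If instead $d-m+i>r$, then $\nu_{d-m+i}=c_\la(t)\le\la_m\le\la_i$, since $i\le m$. In either case $\nu_{d-m+i}\le\la_i$, so $\nu\in\Lambda_t(\la\coma m)$, and all four assertions of the proposition follow.

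I do not anticipate a genuine obstacle: once the inclusion $\Lambda_t(\la\coma m)\inc\Lambda_t(\la\coma 0)$ is noticed, the argument reduces to pure bookkeeping plus the single inequality $c_\la(t)\le\la_m$. The only points that merit a careful second look are the degenerate configurations — ties among $\la_1\coma\dots\coma\la_m$ (so that possibly $r_\la(t)<m$, or even $r_\la(t)=0$, even when $t=s^*$) and the boundary value $t=t_0$ — but the two-case split above is designed so that none of these needs separate treatment; I would simply double-check the index bounds $r_\la(t)\in\{0\}\cup\IN{d-1}$ and $1\le d-m+i\le d$ to make sure the displayed entry of $\nu$ is indeed the one claimed.
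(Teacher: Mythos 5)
Your proof is correct and follows exactly the paper's own route: invoke Theorem~\ref{prop consec de la defi} (the case $m\le 0$) to get $\nu\in\Lambda_t(\la\coma 0)$, $\tr\nu=t$, and $\nu\prec_w\mu$ for $\mu\in\Lambda_t(\la\coma 0)$; observe $\Lambda_t(\la\coma m)\inc\Lambda_t(\la\coma 0)$; and use $c_\la(t)\le c_\la(s^*)=\la_m$ to verify the additional inequalities defining $\Lambda_t(\la\coma m)$. The only difference is that you supply the entry-by-entry case split that the paper compresses into the phrase ``the verification is direct,'' and that bookkeeping is sound.
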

\proof
We already know by Theorem 
\ref{prop consec de la defi} that 
$\nu\in  \Lambda_t(\lambda \coma 0)$ and $\tr \nu=t$.  
Using the inequality   $c_\la(t)  \le c_\la(s^*) 
= \la_m\,$, the verification 
of the fact that 
$\nu\in  \Lambda_t(\lambda\coma m)$ is direct. 
By Theorem \ref{prop consec de la defi}, we conclude 
that $\nu\prec_w \mu$ for every $\mu\in \Lambda_t(\la\coma m)
\inc \Lambda_t(\lambda \coma 0)$.\QED

\pausa
Recall the number 
$s^{**} = c_{\la\coma m}\inv(\la_1)= (d-m)\, \la_1 + \suml_{j=1}^m \la_j \ge s^* $ (with equality$\iff \la_1 = \la_m$)  
defined in Eq. 
\eqref{defi s**} (see also Remark \ref{clm y cl}).

\begin{fed}\label{defi del nulam} \rm 
Let $\lambda\in \R_{+}^d\,^\downarrow$, 
$ t_0= \tr \lambda $ and $m\in \mathbb{Z}$ such that $m<d$.  
Fix $t \in [t_0 \coma+\infty)$ and denote by 
$r = r_{\la\coma m}(t)\,$. 
Consider  the vector $\vlm(t)\in \R_{+}^d\,$
given by the following rule: 
\bit
\item 
If $m\le 0$ then 
$\vlm( t) = \nu_\la(t) 
\stackrel{\eqref{el v}}{=} \big( \la_1\coma \dots \coma \la_{r} \coma 
c_{\la\coma m}(t) \, \uno_{d-r}\big)$.   
\eit
If $m\ge 1$ we define 
\bit
\item 
$\vlm( t) = \big( \la_1\coma \dots \coma \la_{r} \coma 
c_{\la\coma m}(t) \, \uno_{d-r}\big)$ for $t\le s^*$ 
(so that  $r\ge m$ and $c_{\la\coma m}(t)\le \la_m$). 
\item 
$\vlm( t) =  \Big( \la_1\coma \dots \coma \la_{r} 
\coma c_{\la\coma m}(t) \, \uno_{d-m}\coma \la_{r+1}\coma 
\dots \coma \la_m\Big) $
for $t\in (s^*\coma s^{**})$, and  
\item 
$\vlm( t)  = \big( c_{\la\coma m}(t) \, \uno_{d-m}\coma 
\la_{1}\coma \dots \coma \la_m\big)$ 
for $t\ge s^{**}\,$.
\eit
If $\la_1=\la_m\,$, the second case of the definition 
of $\nu_{\la\coma m}(t)$  disappears.
\EOE
\end{fed}

\pausa
In the following Lemma we state several 
properties of the map $\vlm (\cdot)$, which are easy to see:

\begin{lem} \label{las propos del nulam} \rm
Let $\lambda\in \R_{+}^d\,^\downarrow$, 
and $m\in \mathbb{Z}$ such that $m<d$.  The map  $\vlm (\cdot)$ of Definition \ref{defi del nulam} 
has the following properties:
\ben
\item By Remark \ref{clm y cl} the vector $\vlm(t)\in \R_{+}^d\,^\downarrow \,$  
(i.e. it is decreasing) for every $t$.  
\item The map  $\vlm (\cdot)$ is continuous.
\item It is increasing in the sense that 
$t_1 <t_2\implies \vlm(t_1) \leqp \vlm(t_2)\,$.
\item More precisely, for any fixed $k \in \IN{d}\,$, the 
$k$-th entry $\vlm^{(k)}(t)$ of $\vlm(t)$ is given by 
$$
\vlm^{(k)}(t) = 
\begin{cases}  \max\,\{\la_k \coma c_{\la\coma m}(t) \} & 
\peso {if} k \le d-m \, , \\&\\
\min\, \Big\{ \, \max\,\{\la_k \coma c_{\la\coma m}(t) \} 
\coma \la_i \Big\} & \peso{if} k =  d-m+i  \, , \ \ i \in \IN{m}\ .
\end{cases}
$$
\item The vector $\vlm(t)\in\Lambda_t(\lambda\coma m)$ and  
$\tr \vlm(t) = t $ for every 
$t\in [t_0 \coma+\infty) $. \QED
\een
\end{lem}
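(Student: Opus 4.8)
The plan is to organize the proof around item 4, the closed form for the $k$-th entry $\nu_{\la\coma m}^{(k)}(t)$: once this formula is in hand, items 1, 2 and 3 become purely formal and item 5 reduces to a one-line membership check plus a short trace computation. So I would first prove item 4, unfolding Definition \ref{defi del nulam} on each relevant range of $t$ and matching it entry by entry with the claimed expression, writing $c:=c_{\la\coma m}(t)$ and using throughout the facts recorded in Lemma \ref{r y c} and Remark \ref{clm y cl}: that $c_{\la\coma m}$ is continuous and strictly increasing on $[t_0,+\infty)$, that $\la_{r+1}\le c<\la_r$ when $r=r_{\la\coma m}(t)>0$ and $t<d\,\la_1$, and that $c_{\la\coma m}(s^*)=\la_m$ and $c_{\la\coma m}(s^{**})=\la_1$.

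For $m\le 0$ there is a single regime with $r=r_\la(t)$; from $\la_{r+1}\le c_\la(t)<\la_r$ (or $r=0$ and $c_\la(t)=t/d\ge\la_1$ for $t\ge d\,\la_1$) one gets $\max\{\la_k,c\}=\la_k$ for $k\le r$ and $=c$ for $k>r$, which is exactly $\nu_{\la\coma m}(t)$; since every index satisfies $k\le d-m$ there is no truncation. For $m\ge 1$ I would split into $t\le s^*$, $s^*<t<s^{**}$ and $t\ge s^{**}$. On $t\le s^*$ we have $c=c_\la(t)\le\la_m$, so the leading part of the formula again reproduces $\la_1,\dots,\la_r$ followed by copies of $c$, and the $\min$ with $\la_i$ is inactive because $c\le\la_m\le\la_i$; on $t\ge s^{**}$ we have $c\ge\la_1$, hence $\max\{\la_k,c\}=c$ for all $k$ and the $\min$ replaces the last $m$ entries by $\la_1,\dots,\la_m$, which is the third line of Definition \ref{defi del nulam}. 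The genuinely delicate case is the middle interval $s^*<t<s^{**}$ (non-empty exactly when $\la_1\neq\la_m$), where $\la_m<c<\la_1$ and $r=r_{\la\coma m}(t)$ satisfies $\la_r\ge c\ge\la_{r+1}$ and $r<m$: here $\max\{\la_k,c\}$ equals $\la_k$ for $k\le r$ and $c$ for $k>r$, and one must verify that the $\min$ with $\la_i$ trims the tail of constants precisely to $\la_{r+1},\dots,\la_m$. This last verification is the main obstacle: it is a bookkeeping argument that, for each $i\in\IN{m}$, locates position $d-m+i$ among the three blocks (the $r$ leading eigenvalues, the $d-m$ copies of $c$, the retained eigenvalues $\la_{r+1},\dots,\la_m$) by comparing $d-m+i$ with $r$, and then evaluates $\min\{\max\{\la_{d-m+i},c\},\la_i\}$ accordingly. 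Everything else in the lemma is routine.

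Granting item 4, item 2 is immediate since $c_{\la\coma m}$ is continuous and $\max$ and $\min$ of continuous functions are continuous; item 3 is immediate since $c_{\la\coma m}$ is nondecreasing, so each $\nu_{\la\coma m}^{(k)}(\cdot)$, being a $\max$/$\min$ of $c_{\la\coma m}$ with constants, is nondecreasing; and item 1 follows by comparing $\nu_{\la\coma m}^{(k)}(t)$ with $\nu_{\la\coma m}^{(k+1)}(t)$ directly from the formula in the cases $k+1\le d-m$, $k=d-m$, and $k=d-m+i$ with $i\in\IN{m-1}$, using in each that the sequence $\la$ is decreasing. For item 5, the membership $\nu_{\la\coma m}(t)\in\Lambda(\la\coma m)$ is read off the formula: $\nu_{\la\coma m}^{(k)}(t)\ge\la_k$ in both branches (for $k=d-m+i$ using $\la_{d-m+i}\le\la_i$), while $\nu_{\la\coma m}^{(d-m+i)}(t)\le\la_i$ by the very $\min$. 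Finally $\tr\,\nu_{\la\coma m}(t)=t$ is cleanest directly from the piecewise form: for $t\le s^*$ one has $\tr\,\nu_{\la\coma m}(t)=\sum_{i=1}^r\la_i+(d-r)\,c_\la(t)=t$ by the definition of $c_\la$ in \eqref{defi cs}; and for $t>s^*$, since $(d-m)\,c_{\la\coma m}(t)=(d-m)\la_m+t-s^*=t-\sum_{i=1}^m\la_i$, one gets $\tr\,\nu_{\la\coma m}(t)=\sum_{i=1}^m\la_i+(d-m)\,c_{\la\coma m}(t)=t$, both in the middle regime (where the tail $\la_{r+1},\dots,\la_m$ is present) and for $t\ge s^{**}$. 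This settles all five items.
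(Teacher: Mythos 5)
Your proposal is correct and, since the paper declares these properties ``easy to see'' without offering a proof, your argument is effectively the proof the authors had in mind. Reducing everything to the closed-form entrywise expression in item~4 is the right move: once that formula is established, item~1 follows by comparing consecutive entries, items~2 and~3 are formal consequences of the continuity and monotonicity of $c_{\la,m}$ together with the fact that $\max$ and $\min$ preserve both, and item~5 reads off the membership inequalities directly and computes the trace on each regime exactly as you do (using $c_\la(t)=(t-\sum_{i\le r}\la_i)/(d-r)$ for $t\le s^*$, and $(d-m)\,c_{\la,m}(t)=t-\sum_{i\le m}\la_i$ for $t>s^*$).

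One small point worth tightening if you write this up in full: in the middle regime $s^*<t<s^{**}$ you need, for $k=d-m+i$, to split into the three sub-cases $k\le r$, $r<k\le r+d-m$, and $k>r+d-m$. In the first sub-case $\max\{\la_k,c\}=\la_k$ and $i<k$ gives $\la_i\ge\la_k$, so the $\min$ is $\la_k$; in the second, $\max\{\la_k,c\}=c$ and $i\le r$ gives $\la_i\ge\la_r>c$, so the $\min$ is $c$; in the third, $\max\{\la_k,c\}=c$ and $i\ge r+1$ gives $\la_i\le\la_{r+1}\le c$, so the $\min$ is $\la_i$ --- which is exactly the entry $\la_{k-(d-m)}=\la_i$ that Definition~\ref{defi del nulam} places there. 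Also note that the parenthetical ``$r\ge m$'' in the $t\le s^*$ branch of the definition can fail at the boundary $t=s^*$ when $\la_m=\la_{m-1}$, but your verification (based only on $\la_{r+1}\le c\le\la_r$ and $c\le\la_m$) never actually uses $r\ge m$, so the argument is robust. This settles all five items.
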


\pausa
We can now state the main result of this section.

\begin{teo}\label{tutti nu}
Let $\lambda\in \R_{+}^d\,^\downarrow$, 
$ t_0= \tr \lambda $ and $t\in [t_0 \coma+\infty)$.
Fix $m\in \mathbb{Z}$ such that $m<d$. 
Then the vector $\vlm(t) $ defined in  \ref{defi del nulam}
is the unique element of 
$\Lambda_t(\lambda\coma m)$ such that  
\beq\label{mino a mu}
\vlm(t) \prec_w \mu  \peso{for every} \mu \in\Lambda_t(\lambda\coma m) \ .
\eeq
\end{teo}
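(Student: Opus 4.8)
I would split the proof into three parts: (i) $\nu:=\vlm(t)\in\Lambda_t(\la\coma m)$ and $\tr\,\nu=t$; (ii) $\nu\prec_w\mu$ for every $\mu\in\Lambda_t(\la\coma m)$; (iii) $\nu$ is the only vector with property (ii). Part (i) is exactly item 5 of Lemma \ref{las propos del nulam}. For part (iii), suppose $\nu'\in\Lambda_t(\la\coma m)$ also satisfies $\nu'\prec_w\mu$ for all $\mu\in\Lambda_t(\la\coma m)$; applying this to $\mu=\nu$ and applying the minimality of $\nu$ (part (ii)) to $\mu=\nu'$ gives $\nu\prec_w\nu'$ and $\nu'\prec_w\nu$. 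Reading the defining inequality at $k=d$ shows $\tr\,\nu=\tr\,\nu'$, hence $\nu\prec\nu'$ and $\nu'\prec\nu$, and since both vectors lie in $\R_{+}^d\,^\downarrow$ this forces $\nu=\nu'$. So the real work is part (ii).

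\textbf{Reduction.} For $m\le 0$, part (ii) is Theorem \ref{prop consec de la defi}; for $m\ge 1$ and $t\le s^*=s^*(\la\coma m)$ it is Proposition \ref{<s} (note that $\vlm(t)=\nu_\la(t)$ in both of these regimes, by Definition \ref{defi del nulam}). I would therefore concentrate on the case $m\ge 1$, $t> s^*$. Here, by Definition \ref{defi del nulam} and Remark \ref{clm y cl}, with $c=c_{\la\coma m}(t)$ and $r=r_{\la\coma m}(t)\in\{0\coma\ldots\coma m-1\}$, we have $\nu=(\la_1\coma\ldots\coma\la_r\coma c\,\uno_{d-m}\coma\la_{r+1}\coma\ldots\coma\la_m)$ with $\la_r\ge c\ge\la_{r+1}$ (reading $\la_0=+\infty$), $\tr\,\nu=t$, and the normalization $(d-m)\,c=t-\sum_{j=1}^m\la_j$. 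Fix $\mu\in\Lambda_t(\la\coma m)$; since $\nu\coma\mu\in\R_{+}^d\,^\downarrow$ it suffices to prove $\sum_{i=1}^k\nu_i\le\sum_{i=1}^k\mu_i$ for each $k\in\IN{d}$, using only that $\mu$ is non-increasing, $\mu_i\ge\la_i$ for all $i$, $\mu_{d-m+j}\le\la_j$ for $j\in\IN{m}$, and $\tr\,\mu\ge t$.

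\textbf{Case analysis.} I would argue by cases on the position of $k$ relative to the block boundaries $r$ and $d-m+r$ of $\nu$. For $k\le r$ the inequality is $\sum_{i=1}^k\la_i\le\sum_{i=1}^k\mu_i$, immediate from $\mu\geqp\la$. For $k\ge d-m+r$ the indices $k+1\coma\ldots\coma d$ all exceed $d-m$, so $\sum_{i>k}\mu_i=\sum_{j=k+1-(d-m)}^m\mu_{d-m+j}\le\sum_{j=k+1-(d-m)}^m\la_j$ by the upper constraints; hence $\sum_{i\le k}\mu_i=\tr\,\mu-\sum_{i>k}\mu_i\ge t-\sum_{j=k+1-(d-m)}^m\la_j$, which by the normalization of $c$ equals $\sum_{j=1}^{k-(d-m)}\la_j+(d-m)\,c=\sum_{i=1}^k\nu_i$. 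The delicate range is $r<k<d-m+r$, where $\sum_{i=1}^k\nu_i=\sum_{i=1}^r\la_i+(k-r)\,c$; here I would split on $\mu_k$. If $\mu_k\ge c$, monotonicity gives $\mu_i\ge c$ for $r<i\le k$, so $\sum_{i\le k}\mu_i\ge\sum_{i=1}^r\la_i+(k-r)\,c$ directly. If $\mu_k<c$, then $\mu_i<c$ for all $i\ge k$; one bounds each middle entry $\mu_i$ with $k<i\le d-m$ by $c$ and each bottom entry $\mu_{d-m+j}$ with $d-m+j>k$ by $\min\{\la_j\coma c\}$, which is $c$ for $j\le r$ and $\la_j$ for $j>r$ (since $\la_r\ge c\ge\la_{r+1}$), substitutes the resulting upper bound on $\sum_{i>k}\mu_i$ into $\sum_{i\le k}\mu_i=\tr\,\mu-\sum_{i>k}\mu_i\ge t-\sum_{i>k}\mu_i$, and simplifies using $\tr\,\nu=t$; the two sub-ranges $k\le d-m$ and $d-m<k<d-m+r$ both produce exactly $\sum_{i=1}^r\la_i+(k-r)\,c=\sum_{i=1}^k\nu_i$.

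\textbf{The main obstacle.} The difficulty is entirely in the range $r<k<d-m+r$. The naive ``trace-complement'' estimate — bounding $\sum_{i>k}\mu_i$ uniformly by $\sum_{i>k}\nu_i$ — fails, because the top entries $\mu_1\coma\ldots\coma\mu_r$ can be arbitrarily large while the tail of $\mu$ stays small (the bottom constraints only pin down $\mu_{d-m+1}\coma\ldots\coma\mu_d$). The dichotomy $\mu_k\ge c$ (the head of $\mu$ is already heavy enough) versus $\mu_k<c$ (which forces the whole tail of $\mu$ below $c$ and hence controllable) is what circumvents this, and the only genuine subtlety is to notice that in the second alternative the bottom block must be estimated by $\min\{\la_j\coma c\}$, not by $\la_j$, for the final arithmetic to close. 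Everything else reduces to elementary manipulations of finite sums.
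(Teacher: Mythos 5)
Your proof is correct. Parts (i) and (iii) are routine and match what the paper implicitly relies on (the paper's proof does not actually spell out the uniqueness argument; yours fills that in cleanly). For part (ii) your reduction to $m\le 0$ and $t\le s^*$ via Theorem \ref{prop consec de la defi} and Proposition \ref{<s} is identical to the paper's. The difference is in the main case $m\ge 1$, $t>s^*$: the paper first treats $\mu$ with $\tr\mu=t$, writing $\nu=(\rho\coma c\,\uno_{d-m}\coma\omega)$ and $\mu=(\alpha\coma\beta\coma\gamma)$ in three blocks, applying the auxiliary Lemma \ref{lem may} to get $(\rho\coma c\,\uno_{d-m})\prec_w(\alpha\coma\beta)$, and then handling the tail separately; it then handles $\tr\mu>t$ by invoking the monotonicity $\vlm(t)\leqp\vlm(a)$ from Lemma \ref{las propos del nulam}. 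You instead keep $\tr\mu\ge t$ throughout and prove $\sum_{i\le k}\nu_i\le\sum_{i\le k}\mu_i$ directly for each $k$, with the dichotomy $\mu_k\ge c$ versus $\mu_k<c$ (and the observation that the bottom-block entries must be bounded by $\min\{\la_j\coma c\}$, not just $\la_j$). Structurally your dichotomy on $\mu_k$ is exactly what sits inside Lemma \ref{lem may}, and your $\min\{\la_j\coma c\}$ estimate plays the role of the paper's $\gamma\leqp\omega$; so the underlying ideas coincide. What your version buys is that it is fully self-contained for the range $t>s^*$ and dispenses with the trace-reduction step; what it costs is more explicit index bookkeeping. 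Either organization is fine, and your write-up correctly identifies the only genuine subtlety (the failure of the naive trace-complement bound in the middle range $r<k<d-m+r$).
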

\proof 
If $m\le 0$ the result follows from Theorem 
\ref{prop consec de la defi}. 
Suppose now that $m\ge1$. By Lemma \ref{las propos del nulam}, 
the vector $\vlm(t)\in\Lambda_t(\lambda\coma m)$ and  
$\tr \vlm(t) = t $ for  
$t\in [t_0 \coma+\infty) $. 
In Proposition \ref{<s} we have shown that $\vlm(t)$ satisfies 
\eqref{mino a mu} for every 
 $t\in [t_0\coma s^*(\la\coma m)\,]$. Hence we check the other two cases:

\pausa
Case $t\in ( s^*\coma s^{**})$: fix $\mu \in \Lambda_t(\lambda\coma m)$ 
such that $\tr \mu =t$. Let us denote by $r = r_{\la\coma m}( t)$, 
$$
\al = (\mu_1 \coma \dots \coma \mu_{r}) \ , \ \ 
\beta = (\mu_{r+1} \coma \dots \coma \mu_{r+d-m}) \ , \ \  
\gamma = (\mu_{r+d-m+1} \coma \dots \coma \mu_d) \ ,
$$  
$\rho = (\la_1 \coma \dots \coma \la_{r})$ and 
$\omega = (\la_{r+1} \coma \dots \coma \la_m)$. 
Then 
$$
\mu = (\al \coma \beta\coma \gamma) \py 
\vlm(t) = (\rho  \coma c_{\la\coma m}(t) \, \uno_{d-m}\coma  \omega) \ .
$$
Since $\mu \in \Lambda_t(\lambda\coma m)$ and 
$\tr \, \vlm(t) = \tr \, \mu  = t$, then 
$$
\rho \leqp \al \quad ,  \quad  \gamma \leqp \omega
\py \tr \,(\al\coma \beta) \ge \tr (\rho \coma c_{\la\coma m}(t) \, \uno_{d-m})
\ .
$$   
Then we can apply Lemma \ref{lem may} to 
deduce that $(\rho \coma c_{\la\coma m}(t) \, \uno_{d-m})
\prec_w (\al\coma \beta)$. Using this fact jointly 
with $\gamma \leqp \omega$ 
one easily deduces that $\vlm(t) \prec \mu$ 
(because $\tr \mu = \tr \vlm(t) = t$). 

\pausa 
The case  $t \ge s^{**}$ for vectors $\mu \in \Lambda_t(\lambda\coma m)$  
such that $ \tr \mu = t$ follows similarly. 

\pausa
If we have that  $\mu \in \Lambda_t(\lambda\coma m)$  with 
$\tr \mu = a > t$, then 
$$
\mu \in \Lambda_a(\lambda\coma m) \implies 
\vlm(t) \leqp \vlm(a) \prec \mu \implies 
\vlm(t) \prec_w \mu  \  , 
$$
where the first inequality follows from Lemma \ref{las propos del nulam}. 
\QED

\section*{Step 3: minimizers for submajorization in $U_t(S_0,m)$}\label{min maj mat}

\pausa

Let $S_0\in \matpos$ and let $t\geq t_0=\tr(S_0)$.  Notice that Corollary \ref{cor nuevo} together with Theorem \ref{tutti nu} show that the sets $U_t(S_0,m)$ have minimal elements with respect to submajorization. 
We shall 
describe the geometrical structure of minimal elements in
$U_t(S_0\coma m)$  with respect to submajorization for any  $ m< d$ in terms of the geometry of $S_0$. 
We shall see that, under some mild assumptions, 
there exists a unique $S_t\in U_t(S_0\coma m)$ such that  
$\la(S_t)= \vlm( t)$ (the vector 
of Theorem \ref{tutti nu} defined in  \ref{defi del nulam}).
In order to do this  we recall a series of preliminary results and we fix some notations.

\pausa

 \noindent{\bf Interlacing inequalities}. Let $A \in \mathcal{H}(d)$ with $\la(A) \in \R^d\, ^\downarrow$ and let 
$P = P^2 = P^* \in \matpos$ be a projection with $\rk \, P = k$. 
The interlacing inequalities (see \cite{Bat}) relate the eigenvalues of $A$ with the eigenvalues of $PAP\in \mathcal{H}(d)$ as follows: 
\beq\label{inter}
\la_{d-k+i}(A) \le \la_i(PAP) \le \la_i(A)  \peso{for every} i \in \IN{k} \ .
\eeq
On the other hand, if we have the equalities 
\beq\label{inter con =}
\la_i(PAP) =  \la_i(A)  \peso{for every \ \  $i \in \IN{k}$ \ \ then}	 PA = AP  \ , 
\eeq
and that $R(P)$ has an ONB $\{h_i\}_{i\in \IN{k}}$ such that $A\,h_i=\lambda_i\,h_i$
for every  $i\in \IN{k}\,$.  
Indeed, if $Q= I-P$, then 
$\tr \, QAQ = \suml_{i=k+1}^d \la_i(A)$. The interlacing inequalities applied to $QAQ$ 
imply that $$\la_{k+j}(A) \le \la_j(QAQ) \peso{for} j \in \IN{d-k}
\implies  \la_j(QAQ) =  \la_{k+j}(A) \peso{for} j \in \IN{d-k}\,.$$ 
Taking Frobenius norms, we get that 
$$
\|A\|_{_2}^2 = \sum_{i=1}^d \la_i(A)^2 = \|PAP\|_{_2}^2 + \|QAQ\|_{_2}^2  
\implies PAQ = QAP = 0 \ ,
$$
so that $A = PAP +QAQ$. The Ky-Fan inequalities (see \cite{Bat}) assure that 
\beq\label{KF}
\sum_{i=1}^k \la_i(A) = \max \, \Big\{\tr\, PAP : P \in \matpos \ , \ \ 
P= P^2=P^*  \py \rk \, P = k\, \Big\} \ .
\eeq
As before, given  an orthogonal projection $P$ with $\rk \, P = k$ such that 
\beq\label{KF con =} 
\tr \, PAP = \suml_{i=1}^k \la_i(A)
\stackrel{\eqref{inter}}{\implies} \la_i(PAP) = \la_i(A) \peso{for} 
i \in \IN{k}   \stackrel{\eqref{inter con =}}{\implies} PA = AP \ , 
\eeq
and $R(P)$ has an ONB of eigenvectors for $A$ 
associated to $\la_1(A) \coma \dots \coma \la_k(A)$. If we further assume that $\la_k(A) > \la_{k+1}(A)$ then
in both cases \eqref{inter con =} and 
\eqref{KF con =} the projection $P$ is unique, since 
the eigenvectors associated to the first $k$ eigenvalues of $A$ 
generate a unique subspace of $\C^d$.

\pausa

 \noindent{\bf Notations.} 
We fix a matrix $S\in \matpos$ with $ \lambda(S)=\la= (\lambda_1 \coma \ldots \coma \lambda_d) \in \R_{+}^d\,^\downarrow\,$. We shall also fix  
an orthonormal basis $\{h_i\}_{i\in \IN{d}}$  of $\C^d$ such that 
$$
S\,h_i=\lambda_i\,h_i\, \peso{for every} i\in \IN{d}\ .
$$ 
Any other such basis will be denoted as a 
``ONB of eigenvectors for $S\coma \la \,$".  

\begin{lem}\label{lem sobre rango}
Let $B\in \matpos$ and $r\in \IN{d-1}$ such that $\lambda(S+B)
=(\lambda_1 \coma \ldots \coma \lambda_r \coma\al )$, 
for some $\al \in \R_{+}^{d-r}\,^\downarrow$ such that 
$\al_1\leq \lambda_r\,$. 
Let $\eme_r \igdef 
\gen \{h_i: i \in \IN{r}\}$ and $P = P_{\eme_r}$. Then 
$$PB = BP = PBP=0  \ .
$$ 
\end{lem}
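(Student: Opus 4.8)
The plan is to combine the Ky--Fan maximum principle \eqref{KF} with the fact that the projection $P=P_{\eme_r}$ commutes with $S$. First I would note that, since $\eme_r=\gen\{h_i:i\in\IN{r}\}$ is spanned by eigenvectors of $S$, both $\eme_r$ and $\eme_r\orto$ are invariant under $S$; hence $PS=SP$, and consequently $PSP=SP$ has trace $\tr(SP)=\suml_{i=1}^r\la_i$, namely the trace of $S$ restricted to $\eme_r\,$.

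Next I would estimate $\tr\big(P(S+B)P\big)$ from above. Since $\la(S+B)=(\la_1\coma\dots\coma\la_r\coma\al)$ with $\al\in\R_{+}^{d-r}\,^\downarrow$ and $\al_1\le\la_r$, this list is already in decreasing order, so $\la_i(S+B)=\la_i$ for $i\in\IN{r}$ and therefore $\suml_{i=1}^r\la_i(S+B)=\suml_{i=1}^r\la_i\,$. Applying the Ky--Fan inequality \eqref{KF} to $A=S+B$ and the rank-$r$ projection $P$ gives
\beq
\tr\big(P(S+B)P\big)\ \le\ \suml_{i=1}^r\la_i(S+B)\ =\ \suml_{i=1}^r\la_i\ .
\eeq
Subtracting the identity $\tr(PSP)=\suml_{i=1}^r\la_i$ yields $\tr(PBP)=\tr\big(P(S+B)P\big)-\tr(PSP)\le 0$. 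On the other hand $B\in\matpos$ implies $PBP\in\matpos$, so $\tr(PBP)\ge 0$; hence $\tr(PBP)=0$, and being a positive semidefinite matrix with zero trace, $PBP=0$.

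Finally I would upgrade $PBP=0$ to $BP=PB=0$: writing $B=B\rai\,B\rai$ we get $0=PBP=(B\rai P)^*(B\rai P)$, hence $B\rai P=0$, so $BP=B\rai(B\rai P)=0$, and by taking adjoints also $PB=0$; together with $PBP=0$ this proves the lemma. I expect the argument to go through without real difficulty; the only point that needs a moment's attention is the (essentially trivial) observation that $(\la_1\coma\dots\coma\la_r\coma\al)$ is the decreasing rearrangement of the eigenvalue vector of $S+B$ — which is exactly what the hypothesis $\al_1\le\la_r$ guarantees — so that the Ky--Fan bound equals $\suml_{i=1}^r\la_i$ on the nose and the nonnegative quantity $\tr(PBP)$ is squeezed to zero.
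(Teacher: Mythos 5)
Your proof is correct and follows essentially the same route as the paper: compare $\tr(PSP)$ with the Ky--Fan bound for $\tr(P(S+B)P)$, squeeze $\tr(PBP)$ to zero, and use positivity to deduce $PBP=0$ and then $BP=PB=0$. You spell out two small points that the paper leaves implicit — that $\al_1\le\la_r$ is what makes $(\la_1,\dots,\la_r,\al)$ already decreasingly ordered so the top-$r$ Ky--Fan sum is $\suml_{i=1}^r\la_i$, and the $B^{1/2}$ factorization that upgrades $PBP=0$ to $BP=0$ — but the argument is the same.
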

\begin{proof}
Since $\rk \, P=r$ and $\tr(PSP)= \suml_{i=1}^r\lambda_i\,$,  
then the Ky Fan theorem \eqref{KF} assures that 
$$
0\leq \tr(PBP)=\tr(P(S+B)P)-\tr(PSP)\leq \sum_{i=1}^r\lambda_i(S+B)-\sum_{i=1}^r\lambda_i=0\ .
$$ 
Since $B\ge 0$, we have that 
 $\tr(PBP)=0  \implies PBP=0\implies BP= PB = 0$. 
\end{proof}

\begin{pro}\label{lem sobre rango bis}
Let $r\in \IN{d-1}$, then for each $c \in[\la_{r+1}\coma \lambda_r]$ there is a unique $B\in \matpos$ such that $\lambda(S+B)
=(\lambda_1 \coma \ldots \coma \lambda_r \coma c \, \uno_{d-r})$. 
Moreover,  it is given by  \beq\label{eqsugdem}B=  \suml_{i=1}^{d-r} (c-\la _{r+i}) \, 
h_{r+i} \otimes h_{r+i} \peso{ and } S+B=\suml_{i=1}^r \lambda_i \cdot h_{i} \otimes h_{i}+ c\cdot \suml_{i=r+1}^d h_{i} \otimes h_{i}\,.\eeq
\end{pro}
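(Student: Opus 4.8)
The plan is to prove existence and uniqueness separately, and in both cases to reduce everything to the spectral information contained in Lemma \ref{lem sobre rango}. Fix $c\in[\la_{r+1}\coma \la_r]$ and let $B_0$ be the operator displayed in \eqref{eqsugdem}, namely $B_0 = \sum_{i=1}^{d-r}(c-\la_{r+i})\,h_{r+i}\otimes h_{r+i}$. First I would check that $B_0$ does the job: since $c\ge \la_{r+1}\ge \la_{r+i}$ for every $i\in\IN{d-r}$, each coefficient $c-\la_{r+i}$ is nonnegative, so $B_0\in\matpos$. Because the $h_i$ form an ONB of eigenvectors for $S\coma\la$, the operators $S$ and $B_0$ are simultaneously diagonalized by $\{h_i\}$, and adding the two diagonal parts gives exactly $S+B_0 = \sum_{i=1}^r\la_i\,h_i\otimes h_i + c\sum_{i=r+1}^d h_i\otimes h_i$, whose eigenvalue list is $(\la_1\coma\dots\coma\la_r\coma c\,\uno_{d-r})$ — note this is already in decreasing order precisely because $c\le\la_r$. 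This settles existence and the formulas.

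For uniqueness, suppose $B\in\matpos$ also satisfies $\la(S+B) = (\la_1\coma\dots\coma\la_r\coma c\,\uno_{d-r})$. Apply Lemma \ref{lem sobre rango} with $\al = c\,\uno_{d-r}$ (the hypothesis $\al_1 = c\le\la_r$ is met): writing $\eme_r = \gen\{h_i : i\in\IN{r}\}$ and $P = P_{\eme_r}$, we get $PB = BP = PBP = 0$. Hence $B$ leaves $\eme_r^{\perp} = \gen\{h_{r+1}\coma\dots\coma h_d\}$ invariant and vanishes on $\eme_r$, so $B = QBQ$ where $Q = I-P = \sum_{i=r+1}^d h_i\otimes h_i$. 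Since $S$ also commutes with $P$ and $Q$, we have $S+B = PSP + Q(S+B)Q$, so on $\eme_r$ the operator $S+B$ acts as $\sum_{i=1}^r\la_i\,h_i\otimes h_i$, contributing eigenvalues $\la_1\coma\dots\coma\la_r$; consequently the remaining eigenvalues of $S+B$, namely those of $Q(S+B)Q$ restricted to $\eme_r^\perp$, must be exactly $c\,\uno_{d-r}$. Therefore $Q(S+B)Q = c\,Q$ as an operator on $\eme_r^\perp$, which forces $QBQ = cQ - QSQ = \sum_{i=r+1}^d(c-\la_i)\,h_i\otimes h_i$, i.e. $B = B_0$.

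The one genuine subtlety — the step I would expect to need the most care — is the passage "$Q(S+B)Q$ has eigenvalue list $c\,\uno_{d-r}$, hence equals $cQ$." This uses that $Q(S+B)Q$, viewed on the $(d-r)$-dimensional space $\eme_r^\perp$, is a positive operator all of whose eigenvalues equal $c$, so it is the scalar $c$ times the identity of that space. To justify that its eigenvalue list really is $c\,\uno_{d-r}$ I would invoke the block decomposition $S+B = PSP\oplus Q(S+B)Q$ (valid since both blocks commute with $P$), which splits the spectrum of $S+B$ into the spectrum of $PSP|_{\eme_r}$ — which is $\{\la_1\coma\dots\coma\la_r\}$ with multiplicity — together with the spectrum of $Q(S+B)Q|_{\eme_r^\perp}$; comparing with the prescribed list $(\la_1\coma\dots\coma\la_r\coma c\,\uno_{d-r})$ and using $c<\la_r$ would, if needed, be handled by noting that the multiset of the remaining $d-r$ eigenvalues is determined regardless, since removing $r$ copies of the values $\la_1\coma\dots\coma\la_r$ from the total list is unambiguous at the level of multisets. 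This completes the argument; the formula for $S+B$ in \eqref{eqsugdem} then follows immediately from $B = B_0$.
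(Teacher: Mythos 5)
Your proof is correct and follows essentially the same route as the paper's: both extract $PB = BP = 0$ from Lemma \ref{lem sobre rango}, conclude $(S+B)Q = cQ$ with $Q = I - P_{\eme_r}$, and then solve $B = BQ = cQ - SQ$. The only small difference is in justifying $(S+B)Q = cQ$: the paper cites the Ky-Fan equality case \eqref{KF con =}, while you observe directly that $S+B$ block-decomposes as $PSP \oplus Q(S+B)Q$ (since $P$ commutes with both $S$ and $B$) and compare spectral multisets to isolate the lower block as $c\,Q$ --- an equivalent and perhaps slightly more elementary way to close the argument.
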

\begin{proof}
Let $\eme_r \igdef \gen \{h_i: i \in \IN{r}\}$ and $P = P_{\eme_r}$. Suppose that $B\in \matpos$ is such that $\lambda(S+B)
=(\lambda_1 \coma \ldots \coma \lambda_r \coma c \, \uno_{d-r})$. Then,  
by Lemma \ref{lem sobre rango}, $BP = PB=0$. Hence 
$$
P(S+B)P=  (S+B)P=SP = \suml_{i=1}^{r} \la _{i} \, h_{i} \otimes h_{i}   
\stackrel{\rm Eq. \,\eqref{KF con =}}{\implies} (S+B)Q=c\,Q \ , 
$$
where $Q=I-P$. Hence $B= BQ= c\,Q -S\,Q = \suml_{i=1}^{d-r} (c-\la _{r+i}) \, 
h_{r+i} \otimes h_{r+i}\,$. 
\end{proof} 

\begin{rem} In Lemma \ref{lem sobre rango}, we allow the case where
$\la_r=\la_{r+1}= \al_1\,$. In this case
we could change $h_r $ by $h_{r+1}$ (or any other eigenvector for $\la_r$) 
as a generator for $\eme_r\,$. The proof of the Lemma assures that we get another 
projector $P'$ which also satisfies that $BP'=0$. 

\pausa
Similarly, in Proposition \ref{lem sobre rango bis} we allow the case where
$\la_r=\la_{r+1}= c\,$.  By the previous comments, the projection $P$  in the proof of 
Proposition \ref{lem sobre rango bis} is not unique. Nevertheless, in this case 
 the positive perturbation $B$ is unique, because we have that $\rk \, B < d-m$ (this follows from the 
fact that $(c- \la_{r+1})\, h_{r+1} \otimes h_{r+1} = 0$). In fact $B = 
c\,Q -S\,Q $, where $Q$ is the orthogonal projector onto the sum of the eigenspaces of $S$ for the
eigenvalues $\la_i < c$. 
\EOE
\end{rem}

\begin{lem}\label{lem sobre rango tris}
Let $m\in \IN{d-1}$ and $B\in \matpos$ with $\rk \, B \le d-m$. 
Assume that 
$$\lambda(S+B)
=(c\uno _{d-m} \coma \lambda_1 \coma \ldots \coma \lambda_m  ) \ ,
$$ 
for some $c \ge  \lambda_1\,$. Then there exists 
an ONB 
$\{v_i\}_{i\in \IN{d}}\,$  of eigenvectors for 
$S\coma \la$ such that 
\beq\label{EL B}
B=  \suml_{i=1}^{d-m} (c-\la _{m+i}) \, 
v_{m+i} \otimes v_{m+i} \peso{ so that } S+B
=\suml_{i=1}^m \lambda_i \cdot v_{i} \otimes v_{i}+ c\cdot \suml_{i=m+1}^d v_{i} \otimes v_{i}\ .
\eeq
If we assume further that $\la_m >\la_{m+1} $ then $B$ is unique, and Eq. \eqref{EL B} holds
for any ONB of eigenvectors for $S\coma \la \,$.
\end{lem}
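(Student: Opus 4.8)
The strategy is to exhibit an $m$-dimensional $S$-invariant subspace on which $B$ vanishes, and then read off $B$ on its orthocomplement exactly as in Proposition~\ref{lem sobre rango bis}. Write $A=S+B$ and let $P_0$ be the orthogonal projection onto $\ker B$; since $\rk B\le d-m$ we have $\rho:=\rk P_0=d-\rk B\ge m$, and $BP_0=0$ gives $P_0AP_0=P_0SP_0$.

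The first step is to prove that $\lambda_i(P_0SP_0)=\lambda_i(S)$ for every $i\in\IN m$. Applying the interlacing inequalities~\eqref{inter} to $A$ and to $S$ with the rank-$\rho$ projection $P_0$, and using $P_0AP_0=P_0SP_0$, one gets $\lambda_{\rk B+i}(A)\le\lambda_i(P_0SP_0)\le\lambda_i(S)$ for $i\in\IN m$. The left-hand side is in fact $\ge\lambda_i(S)$: since $\lambda(A)=(c\,\uno_{d-m}\coma\lambda_1\coma\dots\coma\lambda_m)$, we have $\lambda_{\rk B+i}(A)=c\ge\lambda_1\ge\lambda_i$ whenever $\rk B+i\le d-m$, while $\lambda_{\rk B+i}(A)=\lambda_{\rk B+i-(d-m)}(S)\ge\lambda_i$ whenever $\rk B+i>d-m$ (because $\rk B+i-(d-m)\le i$). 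Hence equality holds throughout.

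The second step extracts the subspace. Applying the Ky--Fan principle~\eqref{KF} to $P_0SP_0$ inside $R(P_0)$, choose a projection $Q^*$ with $R(Q^*)\inc R(P_0)=\ker B$ and $\rk Q^*=m$ attaining $\suml_{i=1}^m\lambda_i(P_0SP_0)$; then $\tr(Q^*SQ^*)=\suml_{i=1}^m\lambda_i(P_0SP_0)=\suml_{i=1}^m\lambda_i(S)$, so the Ky--Fan equality case~\eqref{KF con =} yields $Q^*S=SQ^*$ together with an ONB of $W:=R(Q^*)$ consisting of eigenvectors of $S$ for $\lambda_1\coma\dots\coma\lambda_m$. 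Thus $W$ is $S$-invariant with $\lambda(S|_W)=(\lambda_1\coma\dots\coma\lambda_m)$ and $W\inc\ker B$, so $B|_W=0$; as $B=B^*$, also $W^\perp$ is $B$-invariant, hence both $W$ and $W^\perp$ are $A$-invariant. Then $A|_W=S|_W$ has spectrum $(\lambda_1\coma\dots\coma\lambda_m)$, which forces $\lambda(A|_{W^\perp})=(c\,\uno_{d-m})$, i.e.\ $A|_{W^\perp}=c\,I_{W^\perp}$ and $B|_{W^\perp}=c\,I_{W^\perp}-S|_{W^\perp}$. Completing an ONB of eigenvectors of $S|_W$ (for $\lambda_1\coma\dots\coma\lambda_m$) with one of $S|_{W^\perp}$ (for $\lambda_{m+1}\coma\dots\coma\lambda_d$) produces the family $\{v_i\}_{i\in\IN d}$ of Eq.~\eqref{EL B}, and $S+B$ is as stated.

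Finally, for the uniqueness claim: if $\lambda_m>\lambda_{m+1}$, then every $m$-dimensional subspace spanned by $S$-eigenvectors for $\lambda_1\coma\dots\coma\lambda_m$ must equal $\eme_m:=\gen\{h_i:i\in\IN m\}$, so the argument above shows $B=(c\,I-S)\,P_{\eme_m^\perp}$ is uniquely determined; moreover $\gen\{h_{m+1}\coma\dots\coma h_d\}=\eme_m^\perp$ for any ONB of eigenvectors for $S\coma\lambda$, whence Eq.~\eqref{EL B} holds for every such basis. The one delicate point is that one cannot simply take $W=\ker B$ (that would work via~\eqref{inter con =} only when $\rk B=d-m$, which may fail, e.g.\ when $c=\lambda_1$); passing through the rank-$m$ compression $Q^*SQ^*$ and the equality case~\eqref{KF con =} is exactly what makes the proof insensitive to $\rk B$ and to the multiplicities in $\lambda$, so that no case distinction on $c$ versus $\lambda_1$ is needed.
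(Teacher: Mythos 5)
Your proof is correct, and it gets to the same structural conclusion by a noticeably different path than the paper's. Both arguments rest on finding an $m$-dimensional $S$-invariant subspace $W\subseteq\ker B$ spanned by eigenvectors of $S$ for $\lambda_1,\dots,\lambda_m$ and then reading off $B$ on $W^\perp$; the difference is how $W$ is produced. The paper fixes an arbitrary $(d-m)$-dimensional subspace $\eme\supseteq R(B)$, exploits the trace identity
\[
\tr B = c(d-m)-\sum_{j=m+1}^d\lambda_j
\]
(forced by $\rk B\le d-m$), and feeds it into the Ky-Fan bound for $S+B$ on $\eme$ to push $\tr(P_\eme S P_\eme)$ down to its Ky-Fan minimum; the equality case of Ky-Fan then identifies $\eme$ as the span of bottom eigenvectors, and $W=\eme^\perp$. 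You instead compress both $S$ and $A=S+B$ by the projection $P_0$ onto $\ker B$ and use the interlacing inequalities \eqref{inter} on each, together with the explicit form of $\lambda(A)$, to force $\lambda_i(P_0SP_0)=\lambda_i(S)$ for $i\le m$; only then do you invoke Ky-Fan and its equality case \eqref{KF con =} \emph{inside} $\ker B$ to extract $W$. In effect your interlacing argument replaces the paper's one-line trace computation \eqref{rank justo}, and your block-diagonalization of $A$ along $W\oplus W^\perp$ replaces the paper's sequence of projections $Q$ and $P=I-Q$. Both routes are clean; the paper's is a bit shorter because the trace identity lets it choose $\eme$ in one step, whereas yours is more explicit about why the restriction of $S$ to $\ker B$ must already carry the top $m$ eigenvalues — a point that also makes it transparent why no case split on $\rk B$ versus $d-m$ or on $c$ versus $\lambda_1$ is needed. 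The uniqueness argument when $\lambda_m>\lambda_{m+1}$ and the verification that the compression $P_0SP_0$ behaves correctly even when $\rk B<d-m$ are handled correctly.
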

\begin{proof}
Note that, since $\rk\, B\le d-m$, then   
\beq\label{rank justo}
\suml_{i=1}^{d-m} \la_i (B) = \tr B = \tr (B+S)-\tr S 
= c\, (d-m) -\suml_{j=m+1}^{d} \la_{j} \ .
\eeq
Take a subspace $\eme\inc \cene$ such that $R(B) \inc \eme$ and $\dim \eme = d-m$. 
Denote by $Q=  P_{\eme}\,$.  
Then $QBQ = B$, and the  Ky-Fan inequalities \eqref{KF} for $S+B$ assure that 
$$
\barr{rl}
\tr(QSQ) & =  \ \tr(Q(S+B)Q)-\tr B \\&\\
&   \le  \ \suml_{i=1}^{d-m}\lambda_i(S+B)- \tr B  
= c\, (d-m)-\tr B \ \stackrel{\eqref{rank justo}}{=}\ 
\suml_{j=m+1}^{d} \la_j \ . \earr
$$ The equality in Ky-Fan inequalities (for $-S$) force that 
$\eme =  \gen\{v_{m+1}\coma  \dots \coma v_d\}$, for some 
ONB $\{v_i\}_{i\in \IN{d}}$ of eigenvectors for $S\coma \la \,$ 
(see the remark following 
Eq. \eqref{KF con =}\,). Thus, we get that $Q\, S = S\,Q = 
\suml_{i=1}^{d-m} \la _{m+i} \, v_{m+i} \otimes v_{m+i}\,$. 
Since $R(B) \inc \eme$ then  $P \igdef I-Q \leq  P_{\ker B}\,$, and 
$$
B\, P = 0 \implies 
P(S+B)P = S\,P = \suml_{i=1}^{m} \la _{i} \, v_{i} \otimes v_{i} 
\stackrel{Eq. \eqref{KF con =}}{\implies}  (S+B)\,Q = c\, Q \ .
$$
Therefore we can now compute 
\begin{equation}\label{despeje B} B = B\,Q = (S+B)Q  -SQ 
=  \suml_{i=1}^{d-m} (c-\la _{m+i}) \, 
v_{m+i} \otimes v_{m+i}\ .\end{equation} 
Finally, if we further assume that $\lambda_m>\lambda_{m+1}$ 
then the subspace  $\eme =  \gen\{v_{m+1}\coma  \dots \coma v_d\}$ is 
independent of the choice of the ONB of eigenvectors for $S\coma \la \,$. 
Thus, in this case $B$ is uniquely determined by \eqref{despeje B}.
\end{proof}

\begin{pro} \label{lem sobre rango quatris}
Let $m\in \IN{d-1}$ and $B\in \matpos$ with $\rk \, B \le d-m$. 
Let $c\in \R$ such that $ \la_{r+1} \le c< \la_r\,$, for some 
$r<m$. Assume that 
$$
\lambda(S+B) =  \Big( \la_1\coma \dots \coma \la_r \coma c \, \uno_{d-m}\coma \la_{r+1}
\coma \dots \coma \la_m\Big) \ .
$$
Then there exists 
an ONB 
$\{v_i\}_{i\in \IN{d}}\,$  of eigenvectors for 
$S\coma \la$ such that 
$$B=  \suml_{i=1}^{d-m} (c-\la _{m+i}) \, 
v_{m+i} \otimes v_{m+i} \peso{ so that } S+B
=\suml_{i=1}^m \lambda_i \cdot v_{i} \otimes v_{i}+ 
c\cdot \suml_{i=m+1}^d v_{i} \otimes v_{i}\ .
$$ 
If we further assume that $\la_m >\la_{m+1} $ then $B$ is unique.
\end{pro}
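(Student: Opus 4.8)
The plan is to peel off the ``frozen'' top $r$ eigenvalues with Lemma \ref{lem sobre rango} and thereby land exactly in the hypotheses of Lemma \ref{lem sobre rango tris}. Fix an ONB $\{h_i\}_{i\in\IN d}$ of eigenvectors for $S\coma\la$; note that the assumption $\la_{r+1}\le c<\la_r$ forces $1\le r$ and, in particular, $\la_r>\la_{r+1}$ (the borderline reading $r=0$, if one allows it, is literally Lemma \ref{lem sobre rango tris}).

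First I would put $\eme_r=\gen\{h_i:i\in\IN r\}$ and $P=P_{\eme_r}$. Since $\max\{c\coma\la_{r+1}\}=c\le\la_r$, the assumed eigenvalue list of $S+B$ has the shape $(\la_1\coma\dots\coma\la_r\coma\al)$ with $\al=(c\,\uno_{d-m}\coma\la_{r+1}\coma\dots\coma\la_m)\in\R_{+}^{d-r}\,^\downarrow$ and $\al_1=c\le\la_r\,$, so Lemma \ref{lem sobre rango} applies and yields $PB=BP=PBP=0$. Writing $Q=I-P$, this makes both $B$ and $S+B$ block-diagonal for $\C^d=\eme_r\oplus\eme_r^\perp$, with $P(S+B)P=PSP$ of spectrum $(\la_1\coma\dots\coma\la_r)$ on $\eme_r$ and $\rk\,(QBQ)=\rk\,B\le d-m$. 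Removing the multiset $\{\la_1\coma\dots\coma\la_r\}$ from $\lambda(S+B)$ — an unambiguous operation because the strict gap $\la_r>c\ge\la_{r+1}\ge\dots\ge\la_m$ separates the two blocks — shows that the compression $Q(S+B)Q$ on $\eme_r^\perp$ has eigenvalue list $(c\,\uno_{d-m}\coma\la_{r+1}\coma\dots\coma\la_m)$, while $S$ restricted to $\eme_r^\perp$ has eigenvalue list $(\la_{r+1}\coma\dots\coma\la_d)$.

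Next I would apply Lemma \ref{lem sobre rango tris} inside the Hilbert space $\eme_r^\perp$ (identified with $\C^{d-r}$), with the parameter $m$ there replaced by $\tilde m=m-r\in\IN{d-r-1}$ and top eigenvalue $c\ge\la_{r+1}\,$; this is legitimate because $(d-r)-\tilde m=d-m$ and $(c\,\uno_{d-m}\coma\la_{r+1}\coma\dots\coma\la_m)$ is exactly the form that lemma requires. It produces (after the obvious reindexing) an ONB $\{v_{r+1}\coma\dots\coma v_d\}$ of $\eme_r^\perp$ with $Sv_{r+j}=\la_{r+j}\,v_{r+j}$ and $QBQ=\suml_{i=1}^{d-m}(c-\la_{m+i})\,v_{m+i}\otimes v_{m+i}$. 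Putting $v_i=h_i$ for $i\in\IN r$ yields the desired ONB of eigenvectors for $S\coma\la$; since $PB=BP=0$, the formula $B=\suml_{i=1}^{d-m}(c-\la_{m+i})\,v_{m+i}\otimes v_{m+i}$ holds on all of $\C^d$, and adding $S=\suml_{i=1}^d\la_i\,v_i\otimes v_i$ gives $S+B=\suml_{i=1}^m\la_i\,v_i\otimes v_i+c\suml_{i=m+1}^d v_i\otimes v_i\,$.

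For uniqueness I would note that $\la_r>\la_{r+1}$ makes $\eme_r$ equal to the sum of the eigenspaces of $S$ for the eigenvalues $\ge\la_r$, hence independent of the chosen ONB; by Lemma \ref{lem sobre rango} every $B$ meeting the hypotheses satisfies $PB=BP=0$, so $B$ is completely determined by its compression to $\eme_r^\perp$. If moreover $\la_m>\la_{m+1}$, then $\la_{r+\tilde m}>\la_{r+\tilde m+1}$, so the uniqueness clause of Lemma \ref{lem sobre rango tris} pins down $QBQ$, and hence $B$. I expect the only real work to be the bookkeeping of the middle step: verifying that $Q(S+B)Q$ has the claimed spectrum (which rests on the strict inequality $c<\la_r$) and that the substitutions $d\to d-r$, $m\to m-r$ match the hypotheses of Lemma \ref{lem sobre rango tris} on the nose — the structural content is already carried by the two preceding lemmas.
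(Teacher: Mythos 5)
Your proof is correct and follows essentially the same route as the paper's: invoke Lemma \ref{lem sobre rango} to kill $PB=BP$ on $\eme_r=\gen\{h_1,\dots,h_r\}$, restrict $S$ and $B$ to $\eme_r^\perp$, apply Lemma \ref{lem sobre rango tris} there (with the parameter shift $m\mapsto m-r$, $d\mapsto d-r$), and reassemble the ONB; uniqueness under $\la_m>\la_{m+1}$ is inherited from the uniqueness clause of Lemma \ref{lem sobre rango tris}. You spell out the block-diagonal spectrum computation and the index bookkeeping that the paper leaves implicit, but the argument is the same.
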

\begin{proof}
Consider the subspace $\eme_r = \gen\{h_1\coma \dots \coma h_r\}$ and $P = P_{\eme_r}\,$. 
By Lemma \ref{lem sobre rango}, we know that $P\, B = B\,P = 0$. 
Let $S_1 = S\big|_{\eme_r\orto} $ and $ B_1 = B\big|_{\eme_r\orto} \ (=B) $ 
considered as 
operators in $L(\eme_r\orto)$. Then $S_1$ and $B_1$ are in the 
conditions of 
Lemma \ref{lem sobre rango tris}, so that there exists 
an ONB 
$\{w_i\}_{i\in \IN{d-r}}$ of $\eme_r\orto$  of eigenvectors for 
$S_1 \coma (\la_{r+1}\coma \dots \coma \la_d)$ such that 
$$ 
B= B_1=  \suml_{i=1}^{d-m} (c-\la _{m+i}) \, 
w_{m+i} \otimes w_{m+i}\ .
$$ 
Finally, let $\{v_i\}_{i\in \IN{d}}$ be given by $v_i=h_i$ for 
$1\leq i\leq r$ and $v_{r+i}=w_i$ for $r+1\leq i\leq d$. Then $\{v_i\}_{i\in \IN{d}}$ has the desired properties. 
Notice that if we further assume that $\lambda_m>\lambda_{m+1}$ then Lemma \ref{lem sobre rango tris} implies that $B_1$ is unique and therefore $B$ is unique, too.
\end{proof}

\begin{rem}\label{rem param 1}
With the notations of Lemma \ref{lem sobre rango tris} assume that 
$\lambda_m=\lambda_{m+1}\,$. In this case  $B$ is not uniquely determined. 
Next we obtain a parametrization of the set of all operators $B\in \matpos$ such that 
$\lambda(S+B) =(c\uno _{d-m} \coma \lambda_1 \coma \ldots \coma \lambda_m  )$. 
Consider $p=(d-m)-\#\{i:\ \lambda_i<\lambda_{m+1}\}$ and notice that in this case 
we have that $1\leq p< \#\{i:\ \lambda_i=\lambda_{m+1}\}=\dim\ker(S-\lambda_{m+1}\,I)$. 
Then, for every  $B\in\matpos$ as above there corresponds a subspace 
$\ene=\gen\{h_i: \ m+1\leq i\leq m+p\}\subset \ker(S-\lambda_m\,I)$ with 
$\dim\ene=p$ such that 
\begin{equation}\label{eq param solB} 
B=(c-\lambda_m)\ P_\ene 
+ \sum_{i=p+1}^{d-m} (c-\lambda_{m+i})\,h_{m+i} \otimes h_{m+i} \ .
\end{equation}
Conversely, for every subspace $\ene\subset \ker(S-\lambda_m\,I)$ with $\dim\ene=p$ 
then the operator $B\in \matpos$ given by \eqref{eq param solB} satisfies  that 
$\lambda(S+B) =(c\uno _{d-m} \coma \lambda_1 \coma \ldots \coma \lambda_m )$. 
Since the previous map $B\mapsto P_\ene$ is bijective, we see that the set of 
all such operators $B$ is parametrized by the set of projections $P_\ene$ such 
that $\ene\subset \ker(S-\lambda_m\,I)$ is a
$p$-dimensional subspace. Moreover, this map is actually an homeomorphism 
between these sets, with their usual metric structures. 

\pausa
Finally, if we let $k=\#\{ i:\ \lambda_i>\lambda_m\}$ then 
the set of operators $S+B$ such that $B\in \matpos$ with $\rk B\leq m-d$  and such 
that $\lambda(S+B)=(c\uno _{d-m} \coma \lambda_1 \coma \ldots \coma \lambda_m  )$ is given by 
$$ 
S+B=\suml_{i=1}^k \lambda_i\cdot h_i\otimes h_i + \lambda_{m} \cdot P_{\ene\,'}
+ c \cdot ( P_{\ene}+ \suml_{i=p+1}^{d-m} h_i\otimes h_i ) \ ,
$$ 
where $\ene\subset \ker(S-\lambda_m\,I)$ is a subspace with $\dim\ene=p$ and 
$\ene\,'=\ker(S-\lambda_{m+1}\,I)\cap \ene\orto$.

\pausa
As a consequence of the proof of Proposition \ref{lem sobre rango quatris}, 
we have a similar description of the operators $B$ of its statement. 
\EOE
\end{rem}

\subsection*{Proofs of the main results}

\pausa

\proof[Proof of Theorem \ref{el St general}]
It is a consequence of Corollary \ref{cor nuevo},  Theorem \ref{tutti nu}, 
and the results of this section (Lemma 
\ref{lem sobre rango tris}
and Propositions \ref{lem sobre rango bis}, 
\ref{lem sobre rango quatris}). The arrow (b) $\implies $ (a) in 
Item 2 follows 
by Definition \ref{defi del nulam} and the fact that both matrices 
$S_{0}$ and $B$ are diagonal on the same basis (as, for example, in Eq. \eqref{eqsugdem}).\QED

\proof[Proof of Proposition \ref{pdefi del nulam}] 
It is a consequence of Corollary \ref{cor nuevo}, Definition \ref{defi del nulam}, Lemma \ref{las propos del nulam} and Theorem \ref{tutti nu}.
\QED

\medskip

\noindent{\bf Acknowledgment. } The authors would like to thank the reviewers of the manuscript for several useful suggestions that improved the exposition of the results contained herein. 

\fontsize {8}{9}\selectfont

\noindent Pedro Massey\\
FCE - Universidad Nacional de La Plata and IAM - CONICET\\
massey@mate.unlp.edu.ar

\medskip

\noindent  Mariano Ruiz\\
FCE - Universidad Nacional de La Plata and IAM - CONICET\\
mruiz@mate.unlp.edu.ar
 
\medskip

\noindent Demetrio Stojanoff\\
FCE - Universidad Nacional de La Plata and IAM - CONICET\\
demetrio@mate.unlp.edu.ar
 
\end{document}